\documentclass[12pt]{amsart}
\usepackage[utf8]{inputenc}
\usepackage[margin=1in]{geometry} 
\usepackage{amsmath,amsthm,amssymb,amsfonts}
\usepackage{enumitem}
\usepackage{mathtools}

\newcommand{\Z}{\mathbb{Z}}
\usepackage{xcolor}

\usepackage{tikz}
\usetikzlibrary{arrows}

\usepackage{thmtools}
\usepackage{bbm}

\newtheorem{theorem}{Theorem}[section]
\newtheorem{lemma}[theorem]{Lemma}
\newtheorem{proposition}[theorem]{Proposition}
\newtheorem{corollary}[theorem]{Corollary}
\newtheorem{definition}[theorem]{Definition}
\theoremstyle{definition}
\newtheorem{example}[theorem]{Example}
\newtheorem{remark}[theorem]{Remark}

\newcommand{\be}{\begin{equation}}
\newcommand{\ee}{\end{equation}}
\newcommand{\de}{\begin{align*}}
\newcommand{\fe}{\end{align*}}

\newcommand{\Zline}{\mathbb Z\times\mathbb Z_{\geq0}}

\usepackage{bm}
\usepackage{graphicx}
\usepackage{subfig}
\usepackage{float}
\usepackage{hyperref}

\title[The censored stochastic six-vertex model]{The censored stochastic six-vertex model and parabolic Kazhdan--Lusztig $R$-polynomials}

\author[H.\ Drillick]{Hindy Drillick}
\address{H.\ Drillick,
	Courant Institute of Mathematical Sciences,
	\newline\hphantom{\quad \ \ H. Drillick}
	 251 Mercer St, New York, NY 10012 USA
}
\email{hindy.drillick@nyu.edu}

\author[L.\ Haunschmid-Sibitz]{Levi Haunschmid-Sibitz}
\address{L.\ Haunschmid-Sibitz,
	KTH Royal Institute of Technology,
	\newline\hphantom{\quad \ L. Haunschmid-Sibitz}
	Lindstedtsvägen 25, SE - 100 44 Stockholm, Sweden
}
\email{levihs@kth.se}

\keywords{
	Stochastic six-vertex model, censoring inequality, parabolic Kazhdan--Lusztig $R$-polynomials.
}

\begin{document}

\begin{abstract}
    We introduce a censored version of the stochastic six-vertex model. We show that for parameters $b_1 < b_2$, this model started from the initial condition $\bm{1}_{x>0}$ is stochastically dominated at any time by the blocking measure. 
    This is a partial analog of the censoring inequality for monotone spin systems.
    In particular, this result allows us to control the behavior of second-class particles.
    The proof uses parabolic Kazhdan--Lusztig $R$-polynomials, whose appearance is explained using a connection between the stochastic six-vertex model and the Iwahori--Hecke algebras of symmetric groups.
    Furthermore, we find an intertwining relation for this process using normalized parabolic Kazhdan--Lusztig $R$-polynomials as an intertwining kernel.
\end{abstract}

\maketitle



\section{Introduction}
\subsection{Preface}
The stochastic six-vertex model was first introduced by \cite{MR1147356} as a specialization of the six-vertex model, which is a classical model from equilibrium statistical mechanics going back to \cite{pauling1935structure}.
With this specialization, the model has a Markovian structure, which enables it to be seen as a discrete-time interacting particle system. It is connected via a suitable limit degeneration to the asymmetric simple exclusion process (ASEP) \cite{MR3589552}, the Kardar--Parisi--Zhang (KPZ) equation \cite{MR3650415,MR4091508, MR4046044}, the stochastic telegraph equation \cite{MR4038051, MR3951443, MR4193889}, and lies in the (one-dimensional) KPZ universality class \cite{MR1147356, MR3466163, MR4561796, aggarwal2024kpz, aggarwal2024scaling}.
Furthermore, it can be put into the more general setting of higher-spin vertex models, see \cite{MR3477349, MR3782413, MR4518478}.

In this paper, our focus is on certain monotonicity properties of the stochastic six-vertex process, which are much more subtle than their counterparts for other exclusion processes such as ASEP.
The stochastic six-vertex process has a censored version, where at certain censored vertices, particles and holes cannot pass each other. Our main result, Theorem~\ref{thm:censoring}, shows that the censored stochastic six-vertex process is still dominated by its stationary measure at any fixed time.
This can be used to bound the behavior of higher-class particles, see Theorem~\ref{thm:labeling}, which was the original motivation for this work.

This result can also be seen as a partial \emph{censoring inequality}. Censoring inequalities originate in  \cite{MR3106501} for monotone spin systems. A censoring inequality states that censoring certain vertex updates can only increase the (total variation) distance to stationarity, and that the censored process will, at any fixed time, be dominated by the stationary measure.
We call our result a \emph{partial} censoring inequality since, in our setting, it is no longer true that censoring always increases the distance to stationarity.


In the proof of the main results, we introduce a family of measures which we call \emph{parabolic Kazhdan--Lusztig R-measures}, as they are given by normalized parabolic Kazhdan--Lusztig R-polynomials. The appearance of Kazhdan--Lusztig R-polynomials in the context of the stochastic six-vertex model can be understood by considering the connection between the colored stochastic six-vertex model and Iwahori--Hecke algebras as described in \cite{bufetov2020interacting, MR4317703, MR4518478}.
Our result gives a new avenue to interpret properties of the Kazhdan--Lusztig R-polynomials probabilistically.

\subsection{The model}

\begin{figure}[ht]
		\centering
		\begin{tabular}{|c|c|c|c|c|c|c|}
			\hline
			Type & I & II & III & IV & V & VI \\
			\hline
			\begin{tikzpicture}[scale = 1.5]
			\draw[fill][white] (0.5, 0) circle (0.05);
			\draw[thick][white] (0, 0) -- (1,0);
			\draw[thick][white] (0.5, -0.5) -- 
			(0.5,0.5);
			\node at (0.5, 0) {Configuration};
			\end{tikzpicture}
			&
			\begin{tikzpicture}[scale = 1.2]
			\draw[fill] (0.5, 0) circle (0.05);
			\draw[thick] (0, 0) -- (1,0);
			\draw[thick] (0.5, -0.5) -- (0.5,0.5);
			\end{tikzpicture}
			&
			\begin{tikzpicture}[scale = 1.2]
			\draw[thick][white] (0, 0) -- (1,0);
			\draw[thick][white] (0.5, -0.5) -- (0.5,0.5);
			\draw[fill] (0.5, 0) circle (0.05);
			\end{tikzpicture}
			&
			\begin{tikzpicture}[scale = 1.2]
			\draw[thick][white] (0, 0) -- (1,0);
			\draw[thick] (0.5, -0.5) -- (0.5,0.5);
			\draw[fill] (0.5, 0) circle (0.05);
			\end{tikzpicture}
			&
			\begin{tikzpicture}[scale = 1.2]
			\draw[thick][white] (0, 0) -- (0.5,0);
			\draw[thick][white] (0.5, 0) -- (0.5, 0.5);
			\draw[thick] (0.5, 0) -- (1, 0);
			\draw[thick] (0.5, -0.5) -- (0.5, 0);
			(0.5,0.5);
			\draw[fill] (0.5, 0) circle (0.05);
			\end{tikzpicture}
			&
			\begin{tikzpicture}[scale = 1.2]
			\draw[thick] (0, 0) -- (1,0);
			\draw[thick][white] (0.5, -0.5) -- (0.5,0.5);
			\draw[fill] (0.5, 0) circle (0.05);
			\end{tikzpicture}
			&
			\begin{tikzpicture}[scale = 1.2]
			\draw[thick] (0, 0) -- (0.5,0);
			\draw[thick] (0.5, 0) -- (0.5, 0.5);
			\draw[thick][white] (0.5, 0) -- (1, 0);
			\draw[thick][white] (0.5, -0.5) -- (0.5, 0);
			(0.5,0.5);
			\draw[fill] (0.5, 0) circle (0.05);
			\end{tikzpicture}
			\\
			\hline
			Weight 
			& 1 & 1 & $b_1$ & $1- b_1$ & $b_2$ & $1-b_2$\\
			\hline
		\end{tabular}
		\caption{The six allowed configurations for the stochastic six-vertex model.}
		\label{fig:s6v}
	\end{figure}
    
We define the stochastic six-vertex model as a measure on collections of up-right paths in the upper half plane $\Zline$. We interpret the first coordinate of vertices $(x,t) 
\in \Zline$ as the spatial coordinate and the second coordinate as time.
All paths start on the horizontal line $\{(x,0):x\in\mathbb Z\}$.
Paths are allowed to share vertices, but not edges. This leads to six possible configurations at each vertex, each receiving a weight according to the two model parameters $b_1,b_2\in(0,1)$, see Figure \ref{fig:s6v}.
We take $b_1,b_2$ such that $0<b_1<b_2<1$ and consider them fixed throughout the paper.
The measure will further depend on an initial condition $\eta_0:\mathbb Z\to\{0,1\}$ where $\eta_0(x)=1$ if there is a path starting at the vertex $(x,0)$ and $\eta_0(x)=0$ otherwise.

Given an initial configuration $\eta_0$ such that $\eta_0(x) = 0$ for all $x$ sufficiently far to the left, i.e., there is a leftmost path and parameters $b_1<b_2$, the stochastic six-vertex measure can be defined by going line by line starting at $t=0$ and moving upwards and then left to right along each line, choosing at each vertex one of the possible configurations with probability given by its weight.
We define the occupation variables $(\eta_t(x))_{(x,t)\in\Zline}$ where $\eta_t(x) = 1$ if there is an incoming path on the edge below $(x+t,t)$ and $\eta_t(x)= 0$ otherwise.

\begin{definition}\label{def:singleclass}
    The process $(\eta_t)_{t\geq0}$ is called the (shifted) stochastic six-vertex process on the line with initial condition $\eta_0$ and parameters $b_1,b_2$.
\end{definition}

\begin{remark}
    The reason for including the shift $x+t$ in the definition of the stochastic six-vertex process is that the natural direction of time for the process is $(1,1)$.
    One way to see this is that only with this shift will the process possess non-translation-invariant stationary measures, while without it only translation-invariant measures will remain, see \cite{MR4660692}.
    Note also that in the scaling limit $b_1,b_2\to0$ with $\frac{b_1}{b_2}=q$ fixed it is the shifted process that converges to ASEP, see \cite{MR3589552}.
\end{remark}

This process has the same stationary measures as the asymmetric simple exclusion process, see \cite{MR4660692}.
In particular, these stationary measures are the blocking measures first defined in \cite{MR418291}, which depend on $b_1$ and $b_2$ through $q\coloneqq\frac{b_1}{b_2}\in(0,1)$.

\begin{definition}[Blocking Measure]

Let 
\begin{equation} \label{eq:ADef}
    A = \{\eta\colon\Z\to\{0,1\}: \sum_{x = -\infty}^0 \eta(x) = \sum_{x = 1}^{\infty}(1- \eta(x))<\infty\}
\end{equation} denote the set of all possible configurations that can be obtained if we start with the initial condition $\eta(x) = 1_{x > 0}$ and perform a finite number of moves. Further let $$\hat{\pi} = \otimes_{x \in \Z} \text{Ber}\left(\frac{q^{-x}}{1 + q^{-x}}\right).$$
We then define the blocking measure $\pi$ to be the projection of the measure $\hat{\pi}$ to the set $A$, i.e., $\pi(\cdot)=\hat{\pi}(\cdot|A)$.\footnote{We have an explicit formula for $  \hat \pi(A)$ (see e.g. \cite{MR3765898}):
$\hat \pi(A) = \frac{1}{ \sum_{l = -\infty}^{\infty} q^{l(l+1)/2}}.
$}
\end{definition}
\begin{proposition}[Proposition 6.4 in \cite{MR4660692}]
    $\pi$ is a stationary measure for the shifted stochastic six-vertex process.
\end{proposition}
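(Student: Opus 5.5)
Since the shifted process updates one row at a time, the plan is to decompose the one-step transition into local moves and check that each move preserves $\pi$. The row at time $t$ is built left to right. A horizontal path (a "carried particle") is generated at a site and deposited later, so in the shifted coordinates $\eta_t(x)$ one step of the dynamics has the following form. A particle at site $x$ (vertical input at $(x+t,t)$) either goes straight up, with probability $b_1$, and so stays at $x$ in shifted coordinates, or turns right, with probability $1-b_1$. In the latter case it travels horizontally and is deposited at the first empty site, where a hole absorbs it with probability $1-b_2$ per step and lets it continue with probability $b_2$. Turning right from $(x+t,t)$ lands it at $(x+t+k, t+1)$ for some $k\ge1$, i.e.\ at shifted position $x+k-1$. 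Because $\pi$ is supported on $A$, every configuration in the support has a leftmost particle and a rightmost hole. So the row sweep is well defined and each step is a.s.\ a finite composition of local operations. The plan is to verify the stationarity equation $\sum_{\eta}\pi(\eta)P(\eta,\eta')=\pi(\eta')$ directly for every $\eta'\in A$.

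The key step is a detailed-balance-type computation for the product weights. On $A$ we have $\pi(\eta)\propto\prod_x (q^{-x})^{\eta(x)}$, up to the factor $\prod_x(1+q^{-x})$, which is divergent but cancels in ratios. Equivalently, $\pi(\eta)\propto q^{-\sum_x x(\eta(x)-\mathbf 1_{x>0})}$, a finite sum on $A$. Moving one particle from $y$ to $y'$ multiplies the weight by $q^{-(y'-y)}$. I will write the transition for one row as a sum over the set of horizontal path segments. A row configuration is determined by the sequence of "blocks": maximal stretches where a horizontal path is carried. Inside a block the particles are shifted by a definite amount. So, in shifted coordinates, a block starting at a particle at $x$ and running over sites $x,\dots,x+m$ has the following effect. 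Each particle in the block moves left by one relative to the unshifted position except where the path is absorbed, and the net effect is that the configuration on that stretch is rotated: the particle at the start goes to the end of the stretch, and the others stay.

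This makes the transition factorize over blocks, and the stationarity equation reduces to a one-dimensional identity per block. The identity says that, for a fixed target configuration $\eta'$ on a segment, the $\pi$-weighted sum over all preimages equals $\pi(\eta')$, using $q=b_1/b_2$. Concretely, the probability of a block moving a particle $k$ steps is $(1-b_1)b_2^{h}(1-b_2)$, where $h$ counts the holes passed, up to a factor $b_1^{\#}$ for the particles passed. The $\pi$-ratio contributes $q^{\pm k}$. The identity to check is then a telescoping geometric sum of the form $\sum(1-b_1)(1-b_2)b_2^{h}b_1^{p}q^{\cdot}$, balancing the "stay" term $b_1$ or $1$ against the "leave" terms.

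The main obstacle is the bookkeeping at block boundaries and the infinite tails, where the rows are everywhere full or empty and the sum over preimages is infinite. I would handle this by truncating. I would show that the process restricted to configurations in $A$ with all particles in $[-N,N]$ (or a coupling that equals the infinite process on the relevant window) preserves the truncated measure $\pi(\cdot\mid\text{window})$ up to an error that vanishes as $N\to\infty$. For this I would use the fact that $\pi$ puts super-exponentially small mass, of order $q^{N^2/2}$, on configurations with discrepancies beyond $N$. If the local identity proves messy, I would instead fall back on taking the ASEP-like structure: write the row transfer operator as a product of local two-site stochastic matrices (Yang--Baxter/Hecke-generator form, as in \cite{bufetov2020interacting}) that each preserve the $q$-exchangeable product weight, since each such generator only swaps adjacent occupations with ratio $q$.
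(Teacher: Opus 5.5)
Your main route does not work as written, for two reasons.

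First, the shift bookkeeping is off by one. With the paper's convention, $\eta_t(x)$ is the vertical edge into $(x+t,t)$. A particle going straight up enters $(x+t,t+1)$, so it lands at $x-1$, not at $x$. Every site crossed by a horizontal path likewise moves from $y$ to $y-1$. A particle that turns lands at $x+k-1$, as you say. Your block picture ("the particle at the start goes to the end, the others stay") is therefore the unshifted frame, and this is not cosmetic. $\pi$ is not translation invariant, and in the unshifted frame one step carries $\pi$ to a translate: a carried path that crosses the last hole escapes to $+\infty$, and the configuration leaves $A$. The unit translation is exactly what makes $\pi$ stationary, so an identity with $\pi(\eta')$ on the right can only hold once it is built in. (Also, particles crossed by a horizontal path sit at type I vertices of weight $1$; the factors $b_1$ come from particles outside blocks that go straight.)

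Second, and more seriously, the stationarity sum does not factorize over blocks. For fixed $\eta'$, the preimages $\eta$ are parametrized by the horizontal-edge sequence $h\in\{0,1\}^{\mathbb Z}$, because path conservation at each vertex forces $\eta$ from $(h,\eta')$. The blocks are the runs of $h=1$, and these vary with the preimage. So $\sum_\eta\pi(\eta)P(\eta,\eta')$ is a transfer-matrix sum over $h$, not a product of independent per-block sums. The per-block identity you would check is also never actually written down.

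Your fallback is the argument that works, and it is what the paper's own tools give; the paper quotes this statement from \cite{MR4660692} rather than proving it. The device you need is the one in the proof of Proposition~\ref{prop:inter}. There, a row update is the left-to-right composition of the two-site operators $P_k$, acting on intermediate states in which the horizontal edge temporarily occupies a lattice slot.

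Each $P_k$ is reversible for $\pi$. If $(\eta(k),\eta(k+1))=(0,1)$, then $\pi(\eta^k)=q\pi(\eta)$ by Proposition~\ref{prop:qExchange}, so $b_1\pi(\eta)=b_2\pi(\eta^k)$, and hence $P_k\pi=\pi$. This is the same computation as in the case $\lambda^k=\lambda$ of Proposition~\ref{prop:vdynamics}.

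What remains is the limit, which your truncation paragraph should handle as follows. Proposition~\ref{prop:inter} is stated from $\bm{1}_{x>0}$, where the sweep has a deterministic left end. For $\eta_t\sim\pi$, the intermediate state at a far-left slot $-M$ equals $\eta_t$ off the event that $\eta_t$ has a particle at or left of $-M$. That event has $\pi$-probability of order $q^{M}$. This is only exponentially small, not $q^{M^2/2}$ as you claim, but it is enough. Since each $P_k$ is a Markov kernel and hence a total-variation contraction, every intermediate law is exactly $\pi$. On the right, a carried path is a.s.\ eventually created in the fully occupied region. From then on the intermediate states are constant and equal to $\eta_{t+1}$; that escaping path is precisely what keeps $\eta_{t+1}\in A$. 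Hence $\mathcal L(\eta_{t+1})=\pi$.
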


We will use the following partial ordering of configurations in $A$ which is stated in terms of height functions. 
\begin{definition}[Height functions]\label{def:heightfun}
    For any $\eta\in A$ define its height function $h_\eta\colon\Z\to\Z$ by setting $\lim_{x\to-\infty}h_\eta(x)=0$, and $h_\eta(x+1)-h_\eta(x)=\eta(x)$.
\end{definition}
For two distributions $\mu$ and $\nu$ on $A$, we say that $\mu$ \emph{stochastically dominates} $\nu$, and write $\nu \preceq\mu$ if there is a coupling of $\nu$ and $\mu$, such that for $(\eta,\xi)$ sampled according to this coupling, $h_\xi(x)\leq h_\eta(x)$ for all $x$.

\begin{remark}
    Note that  $h_\xi \leq h_\eta $ holds if and only if the configuration $\xi$ can be obtained from $\eta$ by only moving particles to the right.
    The unique minimal configuration with respect to this partial order is given by $\bm{1}_{x>0}$.
\end{remark}

\subsection{Main theorem}\label{sec:intromain}
Consider the dynamics of the stochastic six-vertex model modified as follows.
\begin{definition}[Censored stochastic six-vertex model]\label{thm:s6vcensoring}
    Given a set $\mathcal C\subset\Zline$ of vertices, which we call censored vertices, we define the $\mathcal C$-censored stochastic six-vertex process $\eta_t$ as follows. The dynamics of $\eta_t$ are defined as in Definition \ref{def:singleclass}, except that whenever one updates at a vertex in $\mathcal C$, configurations III and V in Figure~\ref{fig:s6v} are never chosen, i.e., at the vertices in $\mathcal C$, paths can never go straight, they must always turn.
    See Figure \ref{fig:censoredLattice} for a visual representation of the censored vertices.
\end{definition}

Our main theorem is the following:
\begin{theorem}\label{thm:censoring}
    Let $0 < b_1 < b_2 < 1$ and let $\mathcal C$ be any subset of $\Zline$. Then the  $\mathcal C$-censored stochastic six-vertex model $\eta$ started from $\eta_0=\bm{1}_{x>0}$ satisfies that the law of $\eta_t$ is dominated by the  blocking measure $\pi$ for any time $t$. 
\end{theorem}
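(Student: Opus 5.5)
The plan is to decompose the dynamics into elementary vertex updates, each acting as a Markov operator on measures on $A$, and to find a family of measures on $A$ that is closed under these operators and consists of measures dominated by $\pi$. In the shifted coordinates, updating one vertex of row $t$ only involves the incoming horizontal edge, so a row sweep is a sequential composition of local operators. Each local operator either leaves $\eta$ unchanged or exchanges the carried path with site $x$ of the configuration. An uncensored vertex gives an operator $T_x$, which performs a swap with probability $b_1$ or $b_2$ depending on whether the swap is "up" or "down" in the Bruhat sense. A censored vertex gives a deterministic operator $S_x$, which always turns and so always produces the swap. The first step is to write the unshifted row update as a product of such operators (using the standard reduction of one row of the stochastic six-vertex model to a sequence of adjacent transpositions; see \cite{bufetov2020interacting, MR4317703}). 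I would then identify these operators with right multiplication by normalized generators $T_{s_x}$ of the Iwahori--Hecke algebra with parameter $q=b_1/b_2$, acting on the parabolic module indexed by $A$. Here $A$ is viewed as minimal coset representatives of $S_\infty/(S_{\le 0}\times S_{>0})$, with $\bm 1_{x>0}$ the identity coset. In the same picture $S_x$ is multiplication by $s_x$ itself.

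The second step is to introduce, for each coset representative $w$, the parabolic Kazhdan--Lusztig $R$-measure
\[
\mu_w(\eta)\propto q^{\ell(\eta)}\, R^J_{\eta,w}(q^{-1})\ \text{(suitably normalized)},
\]
supported on $\eta\le w$ in Bruhat order. This family should satisfy two conditions. First, $\mu_{\mathrm{id}}=\delta_{\bm 1_{x>0}}$, and as $w\to\infty$, $\mu_w\to\pi$, since the weights $q^{\ell(\eta)}$ reproduce the blocking measure. Second, it should intertwine with the dynamics: $\mu_w T_x$ is a mixture of $\mu_w$ and $\mu_{ws_x}$, and $\mu_w S_x$ is also such a mixture. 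I would verify both mixture statements from the standard recursions $R_{u,w}=R_{us,ws}$ if $us<u$, and $R_{u,w}=(q-1)R_{u,ws}+qR_{us,ws}$ otherwise (with parabolic variants). It follows that the law of $\eta_t$ under any censored dynamics is a convex combination $\sum_w c_w\mu_w$ of $R$-measures.

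The third step is to show that $\mu_w\preceq\pi$ for every $w$. I would prove the stronger statement that $\mu_w\preceq\mu_{w'}$ whenever $w\le w'$, by comparing the two measures one edge of the Bruhat order at a time. Letting $w'\to\infty$ then gives the claim, and mixtures preserve domination. This monotonicity is the main obstacle. Domination in height-function order is not obviously implied by the algebraic recursions, and the abstract notes that censoring need not decrease distance to stationarity, so no general censoring argument is available. My first attempt would be to use the recursion above: couple $\mu_{ws}$ with $\mu_w$ by applying the single-swap operator $T_x$, which can only move particles rightward or leftward in a controlled way. The key check is then that the corresponding one-step map on height functions is monotone when $b_1<b_2$. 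If that check fails, I would fall back to verifying the Holley-type lattice condition for the ratio $\mu_{w'}/\mu_w$ directly from positivity properties of $\tilde R$-polynomials.
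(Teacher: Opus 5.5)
Your outline matches the paper's: products of single-site operators, closure of the parabolic Kazhdan--Lusztig $R$-measures under them, monotonicity in the index, then a limit. Two steps fail as written, however.

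First, a censored vertex acts as the identity, not as a deterministic swap. In the coordinates where an uncensored vertex exchanges $\eta(k),\eta(k+1)$ with probability $b_1$ or $b_2$, the exchange is the event that the path goes straight (configurations III and V). A forced turn therefore changes nothing, which is why Proposition~\ref{prop:inter} uses $P_{k,t}=\mathrm{Id}$ on $\mathcal C$. Turning exchanges the carried edge with site $x$ only in the unshifted picture, where the exchange probabilities are $1-b_1$ and $1-b_2$; you have combined the two pictures. With your $S_x$ the closure claim is false: $\mu_{\mathrm{id}}S_0=\delta_{(1)}=q^{-1}v_{(1)}-q^{-1}(1-q)v_{()}$ is not a nonnegative mixture. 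Moreover $\delta_{(1)}\not\preceq\pi$, since it gives mass $0$ to the decreasing set $\{()\}$, so the domination you want fails after a single censored update. With $S_x=\mathrm{Id}$, closure is trivial. Relatedly, the prefactor must be $q^{\ell(w)}$, not $q^{\ell(\eta)}$: the correct formula is $v_{\phi(v)}(\phi(u))=q^{\ell(v)}R^{J,-1}_{u,v}(1/q)$ (Proposition~\ref{prop:parabolicProjection}), and an $\eta$-dependent factor cannot be repaired by normalizing.

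Second, the monotonicity $\mu_w\preceq\mu_{ws}$ is left open, and the two checks you propose fail. Your reduction through $\mu_wT_x=(1-b_2)\mu_w+b_2\mu_{ws}$ is the right start. But the single-site map is order preserving only if $b_1+b_2\le 1$: $P_0\delta_{()}$ and $P_0\delta_{(1)}$ give mass $1-b_1$ and $b_2$ to $()$. Order preservation would not suffice by itself anyway. Holley's condition already fails for the pair $v_{(1)},v_{(2)}$: the ratio $v_{(2)}/v_{(1)}$ equals $1$, $1-q$, $\infty$ at $()$, $(1)$, $(2)$, so it is not increasing, even though $v_{(1)}\preceq v_{(2)}$ holds.

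The missing ingredient is a one-diagonal property of the measure, not of the dynamics (Proposition~\ref{prop:monotonicity}). For decreasing $E$, the difference $P_kv_\lambda(E)-v_\lambda(E)$ is the net flux across pairs with $\mu\in E$ and $\mu^k\notin E$ (necessarily $\mu^k>\mu$), namely $\sum[b_2v_\lambda(\mu^k)-b_1v_\lambda(\mu)]$. When $\lambda^k>\lambda$, the $q$-labeling gives the exact ratio $v_\lambda(\mu^k)=q(1-q^{d_k(\mu,\lambda)})v_\lambda(\mu)\le q\,v_\lambda(\mu)$. Each term is therefore $-b_1q^{d_k(\mu,\lambda)}v_\lambda(\mu)\le 0$. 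This works for all $b_1<b_2$, even though $v_\lambda$ need not have a decreasing density with respect to $\pi$ and $P_k$ need not be attractive.
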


\begin{figure}
    \centering
    \includegraphics[width=0.9\linewidth]{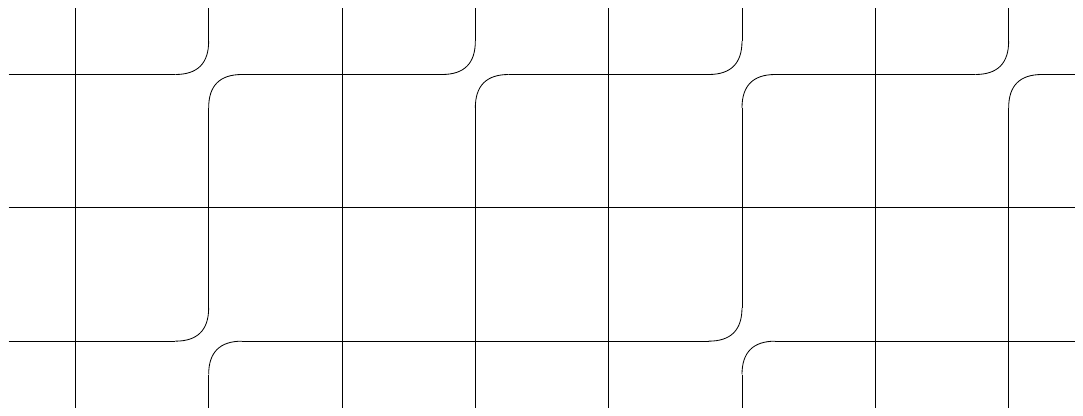}
    \caption{A piece of $\mathbb Z\times\mathbb Z_{\geq0}$ where some of the vertices have been censored.}
    \label{fig:censoredLattice}
\end{figure}

\begin{remark}
    The uncensored case of Theorem \ref{thm:censoring} (i.e., where $\mathcal C = \emptyset$) is also interesting and nontrivial. This case can also be proven using the monotone coupling in \cite[Proposition 2.6]{MR4093866}.
\end{remark}

Let us compare Theorem \ref{thm:censoring} to the censoring inequality from \cite{MR3106501}.
Informally, the censoring inequality can be stated as follows.
\begin{theorem}[Theorem 1.1 in \cite{MR3106501}]\label{thm:PWcensoring}
    Consider Glauber dynamics on a monotone spin system. 
    Let $\nu$ and $\tilde{\nu}$ both be obtained by starting from the minimal configuration of the spin system and running a censored Glauber dynamics where the set of transitions censored for $\tilde{\nu}$ contains the set censored for $\nu$.
Then we have that
\begin{equation}\label{eq:censoringPW}
\tilde{\nu}\preceq\nu\preceq\pi\,,
\end{equation}
where $\pi$ is the stationary measure of the Glauber chain.
\end{theorem}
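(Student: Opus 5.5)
The approach is the original Peres--Winkler argument: track the \emph{density} of the current law with respect to $\pi$. The two facts to establish are that a single-site heat-bath update never decreases the law stochastically when that density is decreasing, and that it preserves the class of laws with decreasing density. Throughout, $\Omega$ is the finite set of configurations $\sigma\in S^V$, where $S$ is totally ordered and $\Omega$ carries the coordinatewise partial order. The stationary measure $\pi$ is monotone, meaning that for each site $v$ the conditional law of $\sigma_v$ given $\sigma_{V\setminus v}$ is stochastically increasing in $\sigma_{V\setminus v}$. Let $K_v$ be the heat-bath update at $v$, which resamples $\sigma_v$ from $\pi(\cdot\mid\sigma_{V\setminus v})$. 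For continuous-time Glauber dynamics I would condition on the (Poisson) sequence of update sites and times, together with which of these updates are censored. It then suffices to treat a fixed finite word $v_1,\dots,v_n$, with the $\nu$-dynamics applying a subword and the $\tilde\nu$-dynamics a further subword. The conclusions are preserved under averaging, since the set of $\mu$ with $\mu\preceq\rho$ is convex.

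\textbf{Step 1 (single update increases the law).} Let $\mu$ have density $f=\mu/\pi$ that is decreasing on $\Omega$. Since $K_v$ is reversible with respect to $\pi$, $\mu K_v$ has density $K_vf=\pi(f\mid\mathcal F_{V\setminus v})$. For increasing $g$,
\[
\mu(g)-\mu K_v(g)=\pi\bigl(\pi(fg\mid\mathcal F_{V\setminus v})-\pi(f\mid\mathcal F_{V\setminus v})\,\pi(g\mid\mathcal F_{V\setminus v})\bigr)\le 0 .
\]
The inequality holds because, once the other coordinates are fixed, $f$ and $g$ are oppositely monotone functions of the single totally ordered variable $\sigma_v$. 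Chebyshev's sum inequality then makes the conditional covariance nonpositive. Hence $\mu\preceq\mu K_v$. By Strassen's theorem this is stochastic domination in the coupling sense.

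\textbf{Step 2 (preservation).} First, $K_vf$ is again decreasing. Indeed, $K_vf(\sigma)=\sum_s f(\sigma^{v\to s})\pi(s\mid\sigma_{V\setminus v})$. Raising $\sigma_{V\setminus v}$ lowers each $f(\sigma^{v\to s})$, and it raises the conditional law of $s$, which is monotonicity of $\pi$. Both effects lower the sum because $s\mapsto f(\sigma^{v\to s})$ is decreasing. Second, $K_v$ is monotone as a kernel, so $\mu\preceq\rho$ implies $\mu K_v\preceq\rho K_v$; this follows from the standard grand coupling of heat-bath updates using a single uniform variable. The initial law $\delta_{\min}$ has density $\mathbbm 1_{\min}/\pi(\min)$, which is decreasing.

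\textbf{Step 3 (comparison).} I will prove by induction on $k$ that after processing the first $k$ letters of the word, the two laws $\tilde\mu_k$ and $\mu_k$ both have decreasing densities and satisfy $\tilde\mu_k\preceq\mu_k$. At letter $v_{k+1}$ there are three cases.
\begin{itemize}
\item If both dynamics update, or both censor, the claim follows from Step 2.
\item If only $\nu$ updates, then $\tilde\mu_k\preceq\mu_k\preceq\mu_kK_{v_{k+1}}$, using Step 1 for the second inequality.
\end{itemize}
Decreasing densities persist in every case by Step 2. This gives $\tilde\nu\preceq\nu$.

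For $\nu\preceq\pi$, I append to the word a long periodic systematic scan of all sites. By Steps 1 and 2 the law only increases along this scan. For an irreducible monotone system it converges to $\pi$, and closedness of $\preceq$ under weak limits yields $\nu\preceq\pi$. Alternatively, positive association of $\pi$ gives $\pi(fg)\le\pi(f)\pi(g)=\pi(g)$ for $f$ decreasing and $g$ increasing.

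The main obstacle is the bookkeeping in Step 3. The order ``update increases the law'' only holds within the class of laws with decreasing density, so this invariant must be carried along simultaneously for both censored chains. Without it, Step 1 cannot be applied at the letters where only one chain updates.
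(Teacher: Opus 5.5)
Your proof is correct, and it is essentially the same approach the paper uses. The paper only cites this theorem, but Appendix~\ref{sec:ASEPregime} reproduces the same Peres--Winkler skeleton for $b_1+b_2\le 1$:
\begin{itemize}
\item your Step~1 together with the first half of Step~2 is Lemma~\ref{lem:decreasing};
\item your kernel monotonicity is Lemma~\ref{lem:monotonicity};
\item your Step~3 is the same induction over letters that are either censored or updated.
\end{itemize}

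There are two small differences. For $\nu\preceq\pi$, the paper argues directly from the decreasing density, which is your positive-association alternative, while your main route appends a scan and passes to the limit. In the averaging over Poisson clocks, what you need is that domination is preserved when you mix \emph{pairs} of laws with common weights, not just convexity for a fixed $\rho$; this follows at once from the test-function characterization.
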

It is not surprising that the full version of this theorem does not hold for the stochastic six-vertex model, since the stochastic six-vertex model lacks a form of monotonicity which is a key ingredient in the proof of Theorem~\ref{thm:censoring}.
Indeed one can easily see that the reverse inequality can hold between $\nu$ and $\tilde{\nu}$, see Example \ref{ex:censoring}.
Considering this, it is perhaps surprising that the second part of the inequality in \eqref{eq:censoringPW}, namely domination by the stationary measure, does still hold.

The proof of Theorem \ref{thm:censoring} proceeds by showing that at any time $t$ the law of $\eta_t$ is given by a convex combination of $v_\lambda$, where $(v_\lambda)_{\lambda\in\mathcal P}$ is a family of probability measures indexed by partitions that we define in Definition~\ref{def:vn}. The measures $v_{\lambda}$ are normalized versions of the \emph{parabolic Kazhdan--Lusztig $R$-polynomials}, a family of polynomials appearing in the study of representations of Hecke algebras. They were introduced in \cite{MR916182} as a projection of the (non-parabolic) Kazhdan--Lusztig $R$-polynomials \cite{MR560412} onto a parabolic quotient.
Theorem~\ref{thm:censoring} follows from the key observation that all the measures $v_\lambda$ are stochastically dominated by $\pi$. 
The measures $v_{\lambda}$ arose naturally in the proof, and their intriguing connection to parabolic Kazhdan--Lusztig $R$-polynomials will be illuminated in Section~\ref{sec:Hecke}.

When expanding $\eta_t$ as a convex combination of the measures $v_{\lambda}$, the coefficients in this expansion are themselves given by a stochastic six-vertex process with the parameters $b_1$ and $b_2$ flipped. This gives rise to an intertwining relation for the stochastic six-vertex model, which we state as Theorem \ref{thm:sym}.

For ASEP the corresponding statement to Theorem \ref{thm:censoring} follows easily from the ideas used in \cite{MR3106501}, see \cite{MR3474475} for SEP and \cite{ferrari2025mixingtimesopenasep} for ASEP (see also \cite{MR4564424}).
For completeness we write out the steps of this proof in Appendix~\ref{sec:ASEPregime}.
For the stochastic six-vertex process this is only possible if $b_1+b_2\leq1$, as otherwise the process lacks the kinds of monotonicity properties necessary for the proof, i.e., the ones in Lemmas~\ref{lem:decreasing} and \ref{lem:monotonicity}.
One should note that even in the case of $b_1+b_2\leq 1$ where Theorem~\ref{thm:censoringASEP} yields a stronger result than Theorem~\ref{thm:censoring}, the proof of Theorem~\ref{thm:censoring} still gives new insight into the structure of the process.

One useful application of Theorem \ref{thm:censoring} is to control the behavior of second-class particles in the multi-class (or colored) stochastic six-vertex process. We can do this by applying Theorem \ref{thm:censoring} to a censoring scheme where we censor vertices corresponding to first-class particles and holes such that the resulting graph describes the behavior of second-class particles with respect to third-class particles, i.e., where all positions containing first-class particles and holes have been deleted. This is stated precisely in Theorem \ref{thm:labeling}. A stochastic domination result of this type allows for greater control over these higher-class particles starting from non-stationary initial conditions. Such coupling techniques to control second-class particles are widely utilized in the study of interacting particle systems, see e.g. \cite{MR2135733, MR2630064, MR2485877, MR2919202, MR5002251, balazs2025localconvergencetpng, bernal2025limitprofilesasep}, but have not yet been fully developed for the stochastic six-vertex model.

\subsection{Organization of the paper}
In Section \ref{sec:proofMain}, we introduce the main technical theorem of the paper, Theorem~\ref{thm:product}, and show how it implies Theorem \ref{thm:censoring}. We prove Theorem~\ref{thm:product} in Section~\ref{sec:measures} by introducing the parabolic Kazhdan--Lusztig $R$-measures $v_{\lambda}$.
In Section~\ref{sec:app}, we show several consequences of the proof of the main theorem.
In Section~\ref{sec:Hecke}, we interpret our results in the language of Hecke algebras and Kazhdan--Lusztig $R$-polynomials. Finally, in Appendix \ref{sec:ASEPregime}, we explain how to prove the full censoring inequality if $b_1 + b_2 \leq 1$. 

\subsection{Acknowledgments}
We thank Dominik Schmid for suggesting this problem and for many helpful discussions. We thank Che Shen for explaining some properties of Kazhdan--Lusztig $R$-polynomials and in particular for suggesting a simple proof of Proposition \ref{prop:probmeasure}. We also thank Ivan Corwin, Jimmy He, and Roger Van Peski for helpful discussions. This material is partly based upon work supported by the National Science Foundation under Grant No. DMS-2424139, while the first author was in residence at the Simons Laufer Mathematical Sciences Institute in Berkeley, California, during the Fall 2025 semester. Part of this work also originated during the second author's stay at the Hausdorff Research Institute for Mathematics during the program ``Probabilistic Methods in Quantum Field Theory".

\section{Proof of the main theorem} \label{sec:proofMain}
In this section we prove the main theorem by reducing it to Theorem~\ref{thm:product} below, which will be proved in Section~\ref{sec:measures}.

\begin{definition}
        Given a particle configuration $\eta$, let  $\eta^k$ denote the configuration obtained from $\eta$ by swapping the values of $\eta(k)$ and $\eta(k+1)$.
\end{definition}

\begin{definition}\label{def:Pk}
    Let $P_k$ be the propagator of the following dynamics on particle configurations $\eta\colon\mathbb Z\to\{0,1\}$. 
    Exchange the values of $\eta(k)$ and $\eta(k+1)$ with probability $b_1$ if $(\eta(k),\eta(k+1))=(0,1)$ and with probability $b_2$ if $(\eta(k),\eta(k+1))=(1,0)$.
    In other words, if $\nu$ is a measure on particle configurations and $\eta$ is a particle configuration, then
    \[
    P_k\nu(\eta)=\begin{cases}
        b_2\nu(\eta^k)+(1-b_1)\nu(\eta), &\text{ if }(\eta(k),\eta(k+1))=(0,1)\\
        b_1\nu(\eta^k)+(1-b_2)\nu(\eta), &\text{ if }(\eta(k),\eta(k+1))=(1,0)\\
        \nu(\eta), &\text{ if }\eta(k)=\eta(k+1)\,.
    \end{cases}
    \]
\end{definition}

\begin{remark}
    Note that the $P_k$ act on Dirac measures in the following way:
    \begin{equation}\label{eq:PkonDirac}
        P_k\delta_{\eta}=\begin{cases}
            (1-b_1)\delta_{\eta}+b_1\delta_{\eta^k}, &\text{ if }(\eta(k),\eta(k+1))=(0,1)\\
            (1-b_2)\delta_{\eta}+b_2\delta_{\eta^k}&\text{ if }(\eta(k),\eta(k+1))=(1,0)\\
            \delta_{\eta}, &\text{ if }\eta(k)=\eta(k+1)\,,
        \end{cases}
    \end{equation}
    as can be easily checked from the definition.
\end{remark}

Applying these operators in the correct order to the initial condition gives exactly the law of the stochastic six-vertex process. 

\begin{proposition}\label{prop:inter}
The law of the $\mathcal C$-censored stochastic six-vertex process $\eta_t$ started from the initial condition $\bm{1}_{x>0}$ is given by 
\[
\lim_{N\to\infty}\prod_{s=0}^{t-1}\prod_{k=-N}^{t-1-s} {P}_{-k,t-1-s}\delta_{\bm{1}_{x>0}}
\]
where 
\[
{P}_{k,t}=\begin{cases}
    P_k\,\text{, if }(k+t,t)\notin\mathcal C\\
    \mathrm{Id}\,\text{, else.}
\end{cases}
\]
\end{proposition}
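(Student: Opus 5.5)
We establish the statement by induction on $t$, showing that one time-step of the (censored) stochastic six-vertex process equals the ordered product of local swap operators $P_k$ over one row. Both sides are then compared as operators on measures on configurations in $A$.

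\emph{Step 1: translating a single vertex update into a swap.} Fix time $s$ and a row configuration, and encode the state just before updating vertex $(y,s)$ by the shifted configuration. The occupation $\eta_s(x)$ records the incoming vertical edge below $(x+s,s)$. The horizontal edge entering $(y,s)$ from the left is the extra bit being carried along the row. The natural bookkeeping is to place this carried horizontal bit in slot $y-s$ of an intermediate configuration, and to place the vertical input at $(y,s)$ in slot $y-s+1$.

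The vertex rule in Figure~\ref{fig:s6v} reads as follows in this encoding. If the vertical input is $1$ and the horizontal input is $0$, the path goes straight up with probability $b_1$ (III) and turns right with probability $1-b_1$ (IV). If the horizontal input is $1$ and the vertical input is $0$, the path goes straight right with probability $b_2$ (V) and turns up with probability $1-b_2$ (VI). Equal inputs are deterministic (I, II).

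We must check that, with the output up-edge becoming the new $\eta_{s+1}(y-s-1)$ and the output right-edge becoming the carried bit, the update acts on the pair of adjacent slots as ``swap with probability $b_1$ for $(0,1)$, $b_2$ for $(1,0)$, identity otherwise''. That is exactly \eqref{eq:PkonDirac} for $P_{k}$ with $k=y-s-1$ up to the sign convention, matching the index $-k$ in the statement. Pinning down the orientation and index convention so that the straight configurations III and V correspond to swaps is the main bookkeeping obstacle. Once it is done, censoring (forbidding III and V) means precisely that no swap occurs, i.e.\ $P_{k,s}=\mathrm{Id}$ at $(k+s,s)\in\mathcal C$.

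\emph{Step 2: ordering within a row.} Vertices in row $s$ are updated left to right. Each consumes the carried bit and produces a new one, which matches applying operators whose slot indices increase along the row. In operator composition this becomes the product $\prod_{k=-N}^{t-1-s}P_{-k,\cdot}$ written with the stated index reversal, and the outer product over $s$ orders the rows with $s=0$ acting first. The shift $x+t$ in the definition of $\eta_t$ is what makes the slot positions align row to row without relabeling. I would verify the endpoint $k=t-1-s$ by noting that to the right of the region already influenced, the configuration is all ones with nothing carried, so further operators act trivially because $\eta(k)=\eta(k+1)$.

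\emph{Step 3: the limit $N\to\infty$.} Since $\bm{1}_{x>0}$ is empty far to the left, and after $t$ steps only finitely many sites have been affected, operators $P_k$ acting on slots left of the leftmost particle see $\eta(k)=\eta(k+1)=0$ and act as the identity. Hence the product stabilizes for $N$ large and the limit exists trivially. Combining the three steps and inducting on $t$ gives the claimed formula.
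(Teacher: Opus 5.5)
Your plan (each vertex update is one swap $P_k$, rows are processed left to right, then a limit) is the paper's, which implements it through the intermediate configurations $\eta_{t,k}$. However, Steps 2 and 3 read the index reversal backwards, and this hides the one analytic point of the proof.

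The row applied first is $s=t-1$ (time $0$), not $s=0$. Within row $r=t-1-s$, the factors $P_{-k,r}$, $k=-N,\dots,r$, are the operators $P_j$ with $j=-r,\dots,N$, applied in increasing $j$. So the fixed endpoint $k=t-1-s$ is the \emph{leftmost} operator $P_{-r}$. It is justified by emptiness on the left: positions $\le -r$ are empty at time $r$, so the omitted $P_j$, $j<-r$, act on $(0,0)$. The parameter $N$ truncates on the \emph{right}, and there nothing stabilizes.

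Already for $t=1$ and $\mathcal C=\emptyset$, $P_0$ creates a hole at position $1$ with probability $b_1$, and each later $P_j$ moves it from $j$ to $j+1$ with probability $b_1$. Hence $P_N\cdots P_1P_0\delta_{\bm{1}_{x>0}}$ gives mass $b_1^{N+1}$ to the configuration with the hole at $N+1$, for every $N$. Geometrically, after a type III vertex the row carries a hole horizontally until the next type IV vertex. So your claim that ``nothing is carried'' to the right is false, and the number of nontrivial factors per row is not bounded by any fixed $N$.

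What is missing is a genuine limit argument. Beyond the last hole of $\eta_r$ all vertical inputs are $1$. A carried $0$ becomes a carried $1$ with probability $1-b_1$ at each uncensored vertex (and surely at a censored one), after which nothing changes. Hence the intermediate configurations are a.s.\ eventually constant, and the truncated row products converge weakly to the law of $\eta_{r+1}$. Since the statement truncates every row at the same $N$ with a single limit outside, you must also justify moving $\lim_N$ through the product over $s$. The paper does this by locality: the state at a position depends only on data to its left.

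Step 1 is also left undone, and the indices you wrote are inconsistent. The vertical input at $(y,s)$ is $\eta_s(y-s)$ and the output up-edge is $\eta_{s+1}(y-s-1)$. So the carried bit must sit in slot $y-s-1$ and the vertical input in slot $y-s$, not in slots $y-s$ and $y-s+1$. Then III and V are exactly the swaps of \eqref{eq:PkonDirac}, and the operator is $P_{y-s-1}$, as you say. But with $k=y-s-1$ the vertex is $(k+1+s,s)$, not $(k+s,s)$. The $-k$ in the statement only encodes the right-to-left ordering of the product; it is not a sign convention, so it cannot reconcile the two.

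In fact the printed condition is off by one column, and the paper's proof has the same slip. Its $\eta_{t,k}(k)$ is the horizontal edge into $(k+t,t)$, whereas $\eta_{t,k}(k+1)=\eta_t(k+1)$ enters $(k+1+t,t)$. For $\mathcal C=\{(1,0)\}$ and $t=1$, the censored process has $\eta_1=\bm{1}_{x>0}$ almost surely, while the formula puts mass $b_1$ on $\lambda=(1)$. This is harmless downstream because $\mathcal C$ is arbitrary. Still, your argument should derive the condition $(k+1+t,t)\notin\mathcal C$ (equivalently, shift $\mathcal C$ by one column) instead of asserting agreement with the printed one.
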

\begin{remark}
    Note that we are following the standard notation of applying the operators in the product from right to left, i.e., the first operator applied to $\delta_{\bm{1}_{x>0}}$ is $P_{0,0}$.
    Unfortunately, this necessitates the reversal in the indices in the formula above.
\end{remark}
\begin{proof}
    Consider the intermediate states $\eta_{t,k}$ defined by
    \[
    \eta_{t,k}(x)=\begin{cases}
        \eta_{t+1}(x)\text{, if }x<k\\
        0\text{, if }x=k\text{ and the edge between $(k-1+t,t)$ and $(k+t,t)$ is empty}\\
        1\text{, if }x=k\text{ and the edge between $(k-1+t,t)$ and $(k+t,t)$ is occupied}\\
        \eta_t(x)\text{, if } x>k\,.
    \end{cases}
    \]
        \begin{figure}
        \centering
        \includegraphics[width=0.7\linewidth]{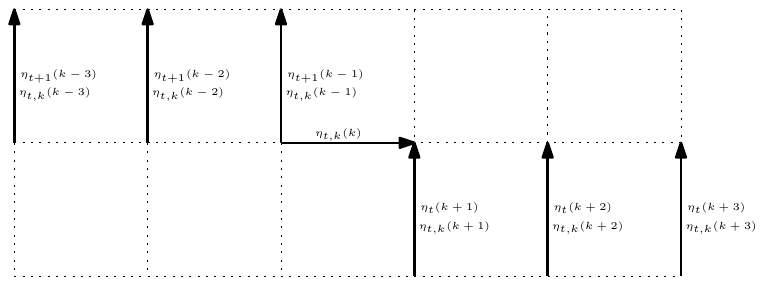}
        \caption{The edges whose occupation variables are the $\eta_{t,k}$ are shown as arrows.}
        \label{fig:intermediate}
    \end{figure}
    See Figure~\ref{fig:intermediate} for a diagram of this definition.
    Note the following properties of the $\eta_{t,k}$.
    First, note that for $k\leq-t$, we have $\eta_{t,k}=\eta_t$, since $\eta_{t+1}(x)=\eta_t(x)=0$ for $x\leq -t$ (because all particles initially started to the right of $0$) and therefore also $\eta_{t,k}(k)=0$.
    Furthermore $\lim_{k\to\infty}\eta_{t,k}=\eta_{t+1}$, with the sequence being almost surely constant for large enough $k$, because almost surely for $x$ large enough $\eta_{t+1}(x)=\eta_t(x)=1$.
    Note that this implies that the law of $\eta_{t,k}$ converges weakly to the law of $\eta_{t+1}$, even though this law will not be constant in $k$.
    Finally note that
    \begin{equation}\label{eq:singleflip}
        \mathcal L(\eta_{t,{k+1}})= P_{k,t}\mathcal L(\eta_{t,k})\,,
    \end{equation}
    where $\mathcal L$ denotes the law of a random variable.
    Indeed, $\eta_{t,k}(k)$ and $\eta_{t,k}(k+1)$ are the occupation variables of the incoming edges of the vertex $(k+t,t)$ and $\eta_{t,k+1}(k)$ and $\eta_{t,k+1}(k+1)$ are the occupation variables of the outgoing edges at this vertex.
    Therefore, if the vertex $(k+t,t)$ is not censored, $\eta_{t,k+1}$ is obtained from $\eta_{t,k}$ exactly by swapping the values $\eta_{t,k}(k)$ and $\eta_{t,k}(k+1)$ with the probabilities just as in Definition~\ref{def:Pk}, while if it is censored $\eta_{t,k+1}=\eta_{t,k}$.

    Iterating \eqref{eq:singleflip} together with the behavior of $\eta_{t,k}$ for $k\to\pm\infty$ discussed above gives
    \[
    \mathcal L(\eta_{t+1})=\lim_{N\to\infty}\prod_{k=-N}^t{P}_{-k,t}\mathcal L(\eta_t)\,.
    \]
    Iterating over $t$ yields
    \[
    \prod_{s=0}^{t-1}\lim_{N\to\infty}\prod_{k=-N}^{t-1-s} {P}_{-k,t-1-s}\delta_{\bm{1}_{x>0}}
    \]
     by noting that $\mathcal L(\eta_0)=\delta_{\bm{1}_{x>0}}$.
     Since $\eta_{t,k}(x)$ only depends on $(\eta_{s}(y))_{s\leq t+1,y\leq x}$ one can take the limit outside of the outer product.
\end{proof}

Theorem~\ref{thm:censoring} will now follow easily from the following theorem, which is the main technical result of the paper.

\begin{theorem}\label{thm:product}
    Let $\delta_{\bm{1}_{x>0}}$ be the Dirac delta on the configuration $\eta(x)=\bm{1}_{x>0}$.
    For any sequence $k_1,\dots,k_n$, the measure
    \[
    \prod_{i=1}^nP_{k_i}\delta_{\bm{1}_{x>0}}
    \]
    is stochastically dominated by the blocking measure $\pi$.
\end{theorem}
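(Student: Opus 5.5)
Following the strategy announced in the introduction, I will find a family of probability measures $(v_\lambda)_{\lambda\in\mathcal P}$ on $A$, indexed by partitions $\lambda$, with three properties: (i) $v_\emptyset=\delta_{\bm{1}_{x>0}}$; (ii) the family is closed under every $P_k$, in the sense that each $P_k v_\lambda$ is a convex combination of measures $v_\mu$; (iii) every $v_\lambda$ satisfies $v_\lambda\preceq\pi$. Given these, induction on $n$ shows that $\prod_{i=1}^nP_{k_i}\delta_{\bm{1}_{x>0}}$ is a convex combination of the $v_\lambda$. Stochastic domination by a fixed measure $\pi$ is preserved under convex combinations: mix the couplings, or use that $\nu\preceq\pi$ iff $\nu(U)\le\pi(U)$ for every increasing event $U$. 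So the theorem follows. Configurations in $A$ correspond to partitions via the Young diagram between the height function $h_\eta$ and that of $\bm{1}_{x>0}$. A swap at $k$ of a pair $(1,0)$ into $(0,1)$ adds a box on the diagonal indexed by $k$; the reverse swap removes one.

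For the $v_\lambda$, I take $v_\lambda(\mu)\propto R_{\mu,\lambda}(q)$ for $\mu\subseteq\lambda$, with a suitable normalization so that the weights are positive. Here $R_{\mu,\lambda}$ is a parabolic Kazhdan--Lusztig $R$-polynomial for the Grassmannian quotient $S_\infty/(S_{\le0}\times S_{>0})$, evaluated at $q=b_1/b_2$. The motivation comes from the Hecke relation: $P_k$ acts on the span of $\delta_\eta$ like a rescaled generator $T_{s_k}$, with quadratic relation $(P_k-(1-b_1-b_2))(P_k-1)=0$ on two-state blocks. The $R$-polynomials record how $T_w^{-1}$ expands in the standard basis, and that expansion is the natural object to be stable under multiplication by generators.

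I will check (ii) by cases on the position of $k$ relative to $\lambda$. If $k$ is an addable corner of $\lambda$, then $P_kv_\lambda$ should equal $(1-b_2)v_\lambda+b_2v_{\lambda+\square}$. If $k$ is a removable corner, then $P_kv_\lambda$ should equal $v_\lambda$, reflecting the known invariance $v_\lambda=P_kv_\lambda$ as in $T_{s}T_{w}$ with $\ell(sw)<\ell(w)$. Otherwise $s_k$ acts trivially. Each identity reduces to the standard recursion
\[
R_{\mu,\lambda}=R_{s\mu,s\lambda}\ \text{ if } s\mu<\mu,\qquad R_{\mu,\lambda}=(q-1)R_{\mu,s\lambda}+qR_{s\mu,s\lambda}\ \text{ if } s\mu>\mu,
\]
together with the parabolic degeneracy case where $s\mu$ lies outside the quotient, which gives a factor $-1$ or $q$ depending on the type. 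Checking that the constants and normalizations match $b_1,b_2$ exactly is bookkeeping. The coefficients are then the transition weights of a six-vertex chain with $b_1,b_2$ swapped, which is the origin of the intertwining.

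The main obstacle is (iii), namely $v_\lambda\preceq\pi$ for every $\lambda$. My first attempt is induction on $|\lambda|$ using the stationarity of $\pi$. Write $\lambda=\lambda'+\square$ at diagonal $k$, so that $v_\lambda=b_2^{-1}\bigl(P_kv_{\lambda'}-(1-b_2)v_{\lambda'}\bigr)$. Because this is not a convex combination, domination does not transfer directly. Instead I would prove the stronger statement that $v_\lambda$ is the projection of $\pi$ conditioned on $\mu\subseteq\lambda$ after applying the operators $P_k$ along a reduced word of $\lambda$, i.e.\ a conditioned blocking measure. This should follow from reversibility: $\pi$ restricted to a Bruhat interval $[\emptyset,\lambda]$ is reversible for each two-site update $P_k$ with $s_k$ in that interval. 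Domination then comes from an FKG/positive-correlation argument for $\pi$: the increasing event $\{\mu\not\subseteq\lambda\}$ conditioned on downward shifts the measure down. If this fails, the fallback is a direct comparison of $\sum_{\mu\in U}R_{\mu,\lambda}(q)$ against $\pi(U)$ for increasing $U$, by induction on $\lambda$ with the recursion above.
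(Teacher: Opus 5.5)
Your architecture matches the paper's: write $\prod_iP_{k_i}\delta_{\bm{1}_{x>0}}$ as a convex combination of the measures $v_\lambda$, starting from $v_{()}=\delta_{\bm{1}_{x>0}}$, and show every $v_\lambda\preceq\pi$. But step (iii), which you rightly call the crux, is not established, and the route you sketch for it rests on a false identification. For $\lambda=(1)$, your candidate ``conditioned blocking measure'' is $\pi(\cdot\mid\mu\subseteq(1))=\frac{1}{1+q}\delta_{()}+\frac{q}{1+q}\delta_{(1)}$, which $P_0$ leaves invariant. By contrast, $v_{(1)}=(1-q)\delta_{()}+q\delta_{(1)}$. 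Since $1-q<\frac{1}{1+q}$, $v_{(1)}$ in fact strictly dominates the conditioned measure. So the FKG fact that conditioning $\pi$ on the decreasing event $\{\mu\subseteq\lambda\}$ pushes it down tells you nothing about $v_\lambda$. The reversibility claim also fails in general: $\{\mu\subseteq\lambda\}$ is not preserved by $P_k$ even when diagonal $k$ meets $\lambda$. For example, $P_0$ moves $(2,1)\subseteq(3,1)$ to $(2,2)\not\subseteq(3,1)$. Your fallback, comparing $v_\lambda$ with $\pi$ by induction on $\lambda$, runs into exactly the obstruction you noted, because $v_{\lambda^k}=b_2^{-1}\bigl(P_kv_\lambda-(1-b_2)v_\lambda\bigr)$ is affine rather than convex.

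The missing idea is to compare consecutive $v$'s with each other rather than with $\pi$, and to obtain $\pi$ as a limit. The same identity gives $v_{\lambda^k}-v_\lambda=b_2^{-1}(P_kv_\lambda-v_\lambda)$ for $\lambda^k>\lambda$. So $v_\lambda\preceq v_{\lambda^k}$ reduces to $P_kv_\lambda(E)\le v_\lambda(E)$ for decreasing $E$. This difference is $\sum_{\mu\in E,\,\mu^k\notin E}\bigl[b_2v_\lambda(\mu^k)-b_1v_\lambda(\mu)\bigr]$, and each such $\mu$ has $\mu^k>\mu$. The explicit ratio $v_\lambda(\mu^k)=q\bigl(1-q^{d_k(\mu,\lambda)}\bigr)v_\lambda(\mu)$ then makes each summand equal to $-b_1q^{d_k(\mu,\lambda)}v_\lambda(\mu)\le0$. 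Combine this with the square case $v_{(n^n)}(\mu)=q^{|\mu|}\prod_{i=1}^{n-d(\mu)}(1-q^i)\to\pi(\mu)$, where $d(\mu)$ is the Durfee size; this yields $v_\lambda\preceq v_{(n^n)}\to\pi$. Both inputs require a concrete formula for $v_\lambda$, such as the paper's rim or $q$-labeling description. Writing ``$v_\lambda(\mu)\propto R_{\mu,\lambda}(q)$ up to a suitable normalization'' does not provide them, and at $q<1$ the $R$-polynomials carry signs, so that normalization would have to depend on $\mu$.

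Separately, your removable-corner case in (ii) is wrong: there $P_kv_\lambda=(1-b_1)v_\lambda+b_1v_{\lambda^k}$, not $v_\lambda$. For $\lambda=(1)$ and $k=0$, $P_0v_{(1)}(())=1-q+b_1q\neq1-q$. The correct formula is still a convex combination, so (ii) survives. Moreover, this $b_1$-move is exactly what makes the coefficient process a six-vertex chain with $b_1,b_2$ swapped.
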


\begin{proof}[Proof of Theorem~\ref{thm:censoring}]
    By Proposition~\ref{prop:inter}, the law of $\eta_t$ is the limit of measures that are each dominated by $\pi$ by Theorem~\ref{thm:product}.
    This property is preserved under weak convergence, and thus the law of $\eta_t$ is also dominated by $\pi$.
\end{proof}

Note that Proposition~\ref{prop:inter} actually shows the equivalence of Theorem~\ref{thm:censoring} and Theorem~\ref{thm:product}, since any product of $P_{k_i}$ can be achieved by censoring sufficiently many vertices.

\section{Parabolic Kazhdan--Lusztig \texorpdfstring{$R$}{R}-measures}\label{sec:measures}
\subsection{Partitions}
Throughout the paper, it will be convenient to identify particle configurations with partitions. We denote by $\mathcal P$ the set of all partitions, i.e.,
    \[
    \mathcal P=\{\lambda = (\lambda_1,\lambda_2,\dots,\lambda_k):\lambda_1\geq\lambda_2\geq\dots\geq\lambda_k\geq 0\}\,,
    \]
    where we identify partitions of different lengths if one can be obtained from the other by removing trailing zeros.
   We define the size of a partition to be $|\lambda|=\sum_{i\geq1}\lambda_i$.

We further identify a partition $\lambda=(\lambda_1,\dots)$ with its Young diagram, i.e., the set
\[
\{(i,j)\in \mathbb Z_{\geq 1}^2: 1\leq j\leq \lambda_i\}\,.
\]
We draw Young diagrams as unions of $1\times1$ squares using the English convention, i.e., we index the rows $i$ from top to bottom and columns $j$ from left to right.

We identify particle configurations in $A$ with partitions by letting $X_1,X_2,\dots$ denote the ordered positions of the particles and then setting $\lambda_i=i-X_i$. In other words, $\lambda_i$ records the distance $X_i$ has traveled from its starting position. Note that increasing $\lambda_i$ by one is equivalent to $X_i$ jumping one step to the left. The configuration $\bm{1}_{x > 0}$ is identified with the empty partition, which we denote by~$()$.

The partial ordering of particle configurations by their height function corresponds to the partial ordering of partitions given by
\begin{equation}\label{eq:ordering}
    \lambda\leq\mu:\!\iff\lambda_i\leq\mu_i\text{, for all } i\geq 1\,.
\end{equation}
In terms of Young diagrams, this means that $\lambda \subseteq \mu$, so we can call this the \emph{containment ordering}.

Finally, we will also denote the set of partitions contained in a box of height $i$ and width $n-i$ by  $$\mathcal{P}(n, i) :=\left\{\lambda \in \mathcal{P}: \lambda \subseteq(n-i)^i\right\}.$$

From now on, we will abuse notation and identify measures on particle configurations with measures on partitions, e.g., we will write things like $\pi(\lambda)$.

\subsection{Properties of the blocking measure}

\begin{proposition}[q-Exchangeability] \label{prop:qExchange}
    The measure $\pi$ is $q$-exchangeable, meaning that if $\tilde{\eta}$ is obtained from $\eta$ by moving a particle one step to the left, then $\pi(\tilde{\eta})=q\pi(\eta)$.
    \end{proposition}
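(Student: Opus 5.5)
Since $\pi=\hat\pi(\cdot\mid A)$, I will compute the ratio $\pi(\tilde\eta)/\pi(\eta)$ directly from the product form of $\hat\pi$. The normalization $\hat\pi(A)$ cancels, as does every factor except those at the two sites involved in the move.

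First observe that if $\eta\in A$ and $\tilde\eta$ is obtained by moving a particle from site $x+1$ to the empty site $x$, then $\tilde\eta\in A$ as well. Indeed, $\sum_{y\le0}\eta(y)$ and $\sum_{y\ge1}(1-\eta(y))$ either both stay the same (if $x\neq 0$) or both increase by one (if $x=0$). In either case the defining equality of $A$ in \eqref{eq:ADef} is preserved.

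Next, write $p_x=\frac{q^{-x}}{1+q^{-x}}$. The Bernoulli weights are then $\hat\pi$-factor $p_x$ for an occupied site and $1-p_x=\frac{1}{1+q^{-x}}$ for an empty site. Since $\hat\pi$ is a product measure and $\eta,\tilde\eta$ differ only at $x,x+1$, the ratio $\pi(\tilde\eta)/\pi(\eta)$ equals
\[
\frac{p_x(1-p_{x+1})}{(1-p_x)p_{x+1}}=\frac{q^{-x}}{q^{-x-1}}=q.
\]
The one subtlety is that $\hat\pi$ of a single configuration is zero (the product is infinite). So the computation must be interpreted on cylinder sets: $\pi(\eta)=\lim_{N}\hat\pi(\text{agree with }\eta\text{ on }[-N,N])/\hat\pi(A)$, or equivalently via the convergent infinite product
\[
\prod_{y\le 0}(1-p_y)\prod_{y\ge1}p_y\cdot\prod_{y}\Bigl(\tfrac{p_y}{1-p_y}\Bigr)^{\eta(y)-\bm 1_{y>0}},
\]
where the last product is finite for $\eta\in A$. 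Because the site-$x$ and site-$(x+1)$ factors are the only ones that change, the ratio $q$ holds uniformly in $N$, and hence in the limit.

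The main (mild) obstacle is precisely this point of making sense of point masses for the conditioned infinite product measure. I would handle it with the explicit formula above, which gives $\pi(\eta)=C\,q^{-\sum_y y(\eta(y)-\bm 1_{y>0})}$ with $C>0$ independent of $\eta$. In this form the claim is immediate: moving a particle one step left decreases its position by one and so multiplies the weight by $q$.
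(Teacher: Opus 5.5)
Your proof is correct and follows the paper's route, made explicit. The paper only cites the $q$-exchangeability of $\hat\pi$; you supply the two missing details, namely that $A$ is closed under the move (so conditioning on $A$ does not affect the ratio) and that the two-site ratio is $\frac{p_x(1-p_{x+1})}{(1-p_x)p_{x+1}}=q$. One correction: $\hat\pi$ does \emph{not} give single configurations mass zero. Since $\sum_{y\le0}p_y+\sum_{y\ge1}(1-p_y)<\infty$, the infinite product you wrote down converges to a positive number, so $\hat\pi$ is purely atomic (as it must be for $\pi=\hat\pi(\cdot\mid A)$ to make sense with $A$ countable), and the cylinder-set limit is unnecessary.
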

\begin{proof}
    This follows from the $q$-exchangeability of the product Bernoulli measure $\hat \pi$. 
\end{proof}

This allows us to compute the probability of a partition $\lambda$ under $\pi$.

\begin{proposition}\label{prop:measure_step} We have that 
    \begin{equation}
       \pi (\lambda) = q^{|\lambda|}\prod_{i=1}^{\infty} (1- q^i).
    \end{equation}
\end{proposition}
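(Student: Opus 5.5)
We will prove the formula in two steps: first reduce everything to the value of $\pi$ at the empty partition using $q$-exchangeability, and then compute that single value directly from the product measure $\hat\pi$.

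\textbf{Step 1: reduction to $\pi(())$.} Under the identification of $A$ with $\mathcal P$, every configuration $\lambda\in A$ is reached from $\bm{1}_{x>0}$ (the empty partition) by $|\lambda|$ single left-moves of particles: each unit increase of some $\lambda_i$ is one left step of $X_i$. Building $\lambda$ box by box through partitions, each intermediate configuration is an exclusion configuration. Applying Proposition~\ref{prop:qExchange} once per left step gives
\[
\pi(\lambda)=q^{|\lambda|}\,\pi(()).
\]

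\textbf{Step 2: computing $\pi(())$.} Summing Step 1 over $\mathcal P$ gives
\[
1=\pi(())\sum_{\lambda\in\mathcal P}q^{|\lambda|}=\pi(())\prod_{i\ge1}(1-q^i)^{-1},
\]
by Euler's generating function for partitions. Since $0<q<1$, all terms are nonnegative and the product converges, so this is justified. Hence $\pi(())=\prod_{i\ge1}(1-q^i)$, which gives the claim.

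\textbf{Alternative route via $\hat\pi$.} One can also compute directly:
\[
\hat\pi(\bm{1}_{x>0})=\prod_{x\le0}\frac{1}{1+q^{-x}}\prod_{x\ge1}\frac{q^{-x}}{1+q^{-x}}
=\prod_{m\ge0}(1+q^m)^{-1}\prod_{x\ge1}(1+q^{x})^{-1}.
\]
Dividing by $\hat\pi(A)$ from the footnote, one would then need the Jacobi triple product identity to match $\prod_i(1-q^i)$. The normalization argument of Step 2 avoids this.

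\textbf{Main obstacle.} The only real subtlety is in Step 1: checking that every $\lambda$ is reachable by left moves through configurations of $A$, and that $q$-exchangeability applies at each step. This holds because adding a box to a partition is exactly one left jump of a single particle into an empty site.
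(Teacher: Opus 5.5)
Your proof is correct and matches the paper's argument: reduce to $\pi(())$ using $q$-exchangeability (one factor of $q$ per left step, i.e., per box added), then fix $\pi(())$ by normalization via Euler's generating function $\sum_{\lambda\in\mathcal P}q^{|\lambda|}=\prod_{i\ge1}(1-q^i)^{-1}$. Your alternative route through $\hat\pi(\bm{1}_{x>0})/\hat\pi(A)$ and the Jacobi triple product does give the same value, but as you say it is unnecessary.
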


\begin{proof}
By $q$-exchangeability, the proof follows from showing that $\pi(())=\prod_{i=1}^{\infty} (1- q^i)$.
We have that 
    \begin{align*}
        1 &= \sum_{\lambda \in \mathcal P }\pi(\lambda ) \\
        &= \pi(()) \sum_{\lambda \in \mathcal P} q^{|\lambda|} \\
        &= \pi(()) \prod_{i =1}^{\infty} (1-q^i)^{-1}.
    \end{align*}
    The second line follows from $q$-exchangeability and the fact that to obtain the configuration $\lambda$ starting from the step configuration requires moving the $i$th leftmost particle $\lambda_i$ steps to the left.
\end{proof}

\subsection{Definition of parabolic Kazhdan--Lusztig \texorpdfstring{$R$}{R}-measures}

\begin{definition} We define the \emph{$k$th diagonal} of a partition to consist of all cells $(i, j)$ such that $i- j = k$, where $i$ denotes the row number and $j$ denotes the column number. 
\end{definition}

\begin{definition}
For $\lambda\in\mathcal P$ and $k\in\mathbb Z$, we define $\lambda^k$ as the partition obtained by flipping a corner in the $k$-th diagonal of $\lambda$, i.e., by adding or removing  a box with coordinates $(i,j)$ that satisfies $i-j=k$. If this is not possible, then we just take $\lambda^k = \lambda$.    
\end{definition}

For example, $()^0=(1)$, $(2,2,1,1)^0=(2,1,1,1)$, $(2,1)^{-2}=(3,1)$ and $(2,2)^1=(2,2)$.
    If $\lambda^k>\lambda$, then the corresponding particle configuration $\eta$ satisfies $(\eta(k),\eta(k+1))=(0,1)$, while if $\lambda^k<\lambda$ it satisfies $(\eta(k),\eta(k+1))=(1,0)$.

\begin{definition}[Rim Decomposition]
    A ribbon (also called a skew hook or border strip, see \cite{MR1354144}) of a partition $\lambda$ is a skew partition $\lambda/\mu$ that contains no $2\times2$ squares.
    The rim of a partition is its maximal ribbon.
    Successively removing the rims from a partition $\lambda$ gives a sequence of partitions $\lambda=\lambda^{(1)} \geq\lambda^{(2)} \geq\dots\geq ()$, such that $\lambda^{(k)}/\lambda^{(k+1)}$ is the rim of $\lambda^{(k)}$. We call this the rim decomposition of $\lambda$. See Figure \ref{fig:rim} for an illustration.    
\end{definition}

\begin{figure}[ht]
\centering
\begin{tikzpicture}[scale=.75, every node/.style={inner sep=0pt}]
  \fill[green!40] (0,0) rectangle (1,-1);
  \fill[blue!30] (1,0) rectangle (2,-1);
  \fill[blue!30] (2,0) rectangle (3,-1);
  \fill[blue!30] (3,0) rectangle (4,-1);
  \fill[red!30] (4,0) rectangle (5,-1);
  \fill[blue!30] (0,-1) rectangle (1,-2);
  \fill[blue!30] (1,-1) rectangle (2,-2);
  \fill[red!30] (2,-1) rectangle (3,-2);
  \fill[red!30] (3,-1) rectangle (4,-2);
  \fill[red!30] (4,-1) rectangle (5,-2);
  \fill[blue!30] (0,-2) rectangle (1,-3);
  \fill[red!30] (1,-2) rectangle (2,-3);
  \fill[red!30] (2,-2) rectangle (3,-3);
  \fill[red!30] (0,-3) rectangle (1,-4);
  \fill[red!30] (1,-3) rectangle (2,-4);
  \fill[red!30] (0,-4) rectangle (1,-5);

  \draw (0,0) rectangle (1,-1);
  \draw (1,0) rectangle (2,-1);
  \draw (2,0) rectangle (3,-1);
  \draw (3,0) rectangle (4,-1);
  \draw (4,0) rectangle (5,-1);
  \draw (0,-1) rectangle (1,-2);
  \draw (1,-1) rectangle (2,-2);
  \draw (2,-1) rectangle (3,-2);
  \draw (3,-1) rectangle (4,-2);
  \draw (4,-1) rectangle (5,-2);
  \draw (0,-2) rectangle (1,-3);
  \draw (1,-2) rectangle (2,-3);
  \draw (2,-2) rectangle (3,-3);
  \draw (0,-3) rectangle (1,-4);
  \draw (1,-3) rectangle (2,-4);
  \draw (0,-4) rectangle (1,-5);
\end{tikzpicture}
\caption{The rim decomposition of the partition $(5,5,3,2,1)$, where each color represents a rim.}
\label{fig:rim}
\end{figure}

\begin{definition}[Parabolic Kazhdan--Lusztig $R$-measures] \label{def:vn}
For a partition $\lambda$ with rim-decomposition $(\lambda^{(k)})_{k\geq 1}$ and a partition $\mu\leq\lambda$, define
\begin{equation}
    v_\lambda(\mu):= q^{|\mu|}\prod_{k\geq1}(1-q^k)^{n_k}\,,
\end{equation}
where $n_k$ is the number of connected components of the partition $\lambda^{(k)}$ (or equivalently of the rim $\lambda^{(k)}/\lambda^{(k+1)}$) when all boxes in $\mu$ are removed.
For connectivity, we consider only boxes sharing an edge, i.e., the components are `rook-connected'.
For $\mu\not\leq\lambda$ set $v_\lambda(\mu)=0$.\footnote{When we originally defined the measures $v_{\lambda}$, we were unaware of their connection to the parabolic Kazhdan--Lusztig $R$-polynomials, and it was only upon reading \cite{MR1972246} that we realized the precise connection. In \cite[Theorem 3.1]{MR1972246}, Brenti gives a combinatorial description of the parabolic Kazhdan--Lusztig $R$-polynomials, which turns out to be quite similar to the definition of the $v_{\lambda}$ we give here.}
\end{definition}
See Figure \ref{fig:vlambda} for an example of how to compute $v_{\lambda}$.
\begin{figure}[ht]
\centering
\begin{tikzpicture}[scale=0.75, every node/.style={inner sep=0pt}]
  \fill[gray!10] (0,0) rectangle (1,-1);
  \draw[gray!50, dashed] (0,0) rectangle (1,-1);
  \fill[gray!10] (1,0) rectangle (2,-1);
  \draw[gray!50, dashed] (1,0) rectangle (2,-1);
  \fill[gray!10] (2,0) rectangle (3,-1);
  \draw[gray!50, dashed] (2,0) rectangle (3,-1);
  \fill[gray!10] (0,-1) rectangle (1,-2);
  \draw[gray!50, dashed] (0,-1) rectangle (1,-2);
  \fill[gray!10] (1,-1) rectangle (2,-2);
  \draw[gray!50, dashed] (1,-1) rectangle (2,-2);
  \fill[gray!10] (2,-1) rectangle (3,-2);
  \draw[gray!50, dashed] (2,-1) rectangle (3,-2);

  \fill[blue!30] (3,0) rectangle (4,-1);
  \fill[red!30] (4,0) rectangle (5,-1);
  \fill[red!30] (3,-1) rectangle (4,-2);
  \fill[red!30] (4,-1) rectangle (5,-2);
  \fill[blue!30] (0,-2) rectangle (1,-3);
  \fill[red!30] (1,-2) rectangle (2,-3);
  \fill[red!30] (2,-2) rectangle (3,-3);
  \fill[red!30] (0,-3) rectangle (1,-4);
  \fill[red!30] (1,-3) rectangle (2,-4);
  \fill[red!30] (0,-4) rectangle (1,-5);

  \draw (3,0) rectangle (4,-1);
  \draw (4,0) rectangle (5,-1);
  \draw (3,-1) rectangle (4,-2);
  \draw (4,-1) rectangle (5,-2);
  \draw (0,-2) rectangle (1,-3);
  \draw (1,-2) rectangle (2,-3);
  \draw (2,-2) rectangle (3,-3);
  \draw (0,-3) rectangle (1,-4);
  \draw (1,-3) rectangle (2,-4);
  \draw (0,-4) rectangle (1,-5);
\end{tikzpicture}
\hspace{40pt}
\begin{tikzpicture}[scale=0.75, every node/.style={inner sep=0pt}]
  \fill[gray!10] (0,0) rectangle (1,-1);
  \draw[gray!50, dashed] (0,0) rectangle (1,-1);
  \fill[gray!10] (1,0) rectangle (2,-1);
  \draw[gray!50, dashed] (1,0) rectangle (2,-1);
  \fill[gray!10] (2,0) rectangle (3,-1);
  \draw[gray!50, dashed] (2,0) rectangle (3,-1);
  \fill[gray!10] (0,-1) rectangle (1,-2);
  \draw[gray!50, dashed] (0,-1) rectangle (1,-2);
  \fill[gray!10] (0,-2) rectangle (1,-3);
  \draw[gray!50, dashed] (0,-2) rectangle (1,-3);

  \fill[blue!30] (3,0) rectangle (4,-1);
  \fill[red!30] (4,0) rectangle (5,-1);
  \fill[blue!30] (1,-1) rectangle (2,-2);
  \fill[red!30] (2,-1) rectangle (3,-2);
  \fill[red!30] (3,-1) rectangle (4,-2);
  \fill[red!30] (4,-1) rectangle (5,-2);
  \fill[red!30] (1,-2) rectangle (2,-3);
  \fill[red!30] (2,-2) rectangle (3,-3);
  \fill[red!30] (0,-3) rectangle (1,-4);
  \fill[red!30] (1,-3) rectangle (2,-4);
  \fill[red!30] (0,-4) rectangle (1,-5);

  \draw (3,0) rectangle (4,-1);
  \draw (4,0) rectangle (5,-1);
  \draw (1,-1) rectangle (2,-2);
  \draw (2,-1) rectangle (3,-2);
  \draw (3,-1) rectangle (4,-2);
  \draw (4,-1) rectangle (5,-2);
  \draw (1,-2) rectangle (2,-3);
  \draw (2,-2) rectangle (3,-3);
  \draw (0,-3) rectangle (1,-4);
  \draw (1,-3) rectangle (2,-4);
  \draw (0,-4) rectangle (1,-5);
\end{tikzpicture}
\caption{An illustration of how to compute $v_{\lambda}(\mu)$ for  $\lambda = (5,5,3,2,1)$ and different choices of $\mu$. 
On the left we have $\mu = (3,3)$ and $v_{\lambda}(\mu)=q^6(1-q)^2(1-q^2)^2$, while on the right we have $\mu = (3,1,1)$ and $v_{\lambda}(\mu)=q^5(1-q)(1-q^2)^2$.}
\label{fig:vlambda}
\end{figure}

\begin{remark} We call these measures \emph{parabolic Kazhdan--Lusztig $R$ measures} because as we will show in Proposition \ref{prop:parabolicProjection}, we have 
    $$q^{-|\lambda|} v_{\lambda}(\mu) = R^{J, -1}_{u, v}\left(\frac1q\right)$$ where $R^{J, -1}_{u, v}\left(q \right)$ are certain parabolic Kazhdan--Lusztig $R$-polynomials defined in \cite{MR916182}. We define $u, v$ and $J$ in Proposition \ref{prop:parabolicProjection} in terms of parabolic quotients of the symmetric group.
\end{remark}

The functions $v_\lambda$, which we see as vectors in the vector space spanned by formal linear combinations of partitions, have the following alternative description which will be useful in the proofs below:

\begin{definition}[$q$-labeling of $\lambda$]
Given a partition $\lambda$, we will assign a number to each cell $(i,j)$ in $\lambda$. For each $k \in \Z$, we will label the cells in the $k$th diagonal according to the following rules:
\begin{enumerate}
    \item If $\lambda^k = \lambda$, we fill the $k$th diagonal with ones.
    \item If $\lambda^k > \lambda$, then we fill the $k$th diagonal with the values $(1-q)^{-1}, (1-q^2)^{-1},\dots$ starting with the box touching the bottom boundary.
    \item If $\lambda^k < \lambda$, we fill the $k$th diagonal with the values $(1-q),(1-q^2),\dots$, again starting with the box touching the bottom boundary.
\end{enumerate}
Let $w_\lambda(i,j)$ denote the value assigned to box $(i,j)$ by this labeling. See Figure \ref{fig:tiling} for an illustration of this labeling. 
\end{definition}

\begin{figure}[h]
\centering
\begin{tikzpicture}[scale=.75, every node/.style={inner sep=0pt}]
\tiny
  \draw (0,0) rectangle (1,-1);
  \draw (1,0) rectangle (2,-1);
  \draw (2,0) rectangle (3,-1);
  \draw (3,0) rectangle (4,-1);
  \draw (4,0) rectangle (5,-1);
  \draw (0,-1) rectangle (1,-2);
  \draw (1,-1) rectangle (2,-2);
  \draw (2,-1) rectangle (3,-2);
  \draw (3,-1) rectangle (4,-2);
  \draw (4,-1) rectangle (5,-2);
  \draw (0,-2) rectangle (1,-3);
  \draw (1,-2) rectangle (2,-3);
  \draw (2,-2) rectangle (3,-3);
  \draw (0,-3) rectangle (1,-4);
  \draw (1,-3) rectangle (2,-4);
  \draw (0,-4) rectangle (1,-5);

  \node at (0.5,-0.5) {$1{-}q^{3}$};
  \node at (1.5,-0.5) {$\frac{1}{1{-}q^{2}}$};
  \node at (2.5,-0.5) {$1$};
  \node at (3.5,-0.5) {$1{-}q^{2}$};
  \node at (4.5,-0.5) {$1{-}q$};
  \node at (0.5,-1.5) {$\frac{1}{1{-}q^{2}}$};
  \node at (1.5,-1.5) {$1{-}q^{2}$};
  \node at (2.5,-1.5) {$\frac{1}{1{-}q}$};
  \node at (3.5,-1.5) {$1$};
  \node at (4.5,-1.5) {$1{-}q$};
  \node at (0.5,-2.5) {$1{-}q^{2}$};
  \node at (1.5,-2.5) {$\frac{1}{1{-}q}$};
  \node at (2.5,-2.5) {$1{-}q$};
  \node at (0.5,-3.5) {$\frac{1}{1{-}q}$};
  \node at (1.5,-3.5) {$1{-}q$};
  \node at (0.5,-4.5) {$1{-}q$};
\end{tikzpicture}
\caption{The $q$-labeling of the partition $(5,5,3,2,1)$.}
\label{fig:tiling}
\end{figure}

\begin{lemma}[$q$-labeling description of $v_{\lambda}$]\label{lem:alternatevlambda}
    The function $v_\lambda$ has the following alternative description.
    \begin{equation}
        v_{\lambda}(\mu) = q^{|\mu|} \prod_{(i,j)\in\lambda/\mu}w_\lambda(i,j)\,.
    \end{equation}
\end{lemma}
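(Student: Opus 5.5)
The plan is to rewrite $\prod_k(1-q^k)^{n_k}$ as a product over the cells of $\lambda/\mu$. Each component count $n_k$ is expressed as ``vertices minus edges'' of a path graph, and each edge is charged to one of its endpoints. The point is that the charge a surviving cell receives depends only on the local boundary shape of $\lambda$ at its diagonal, not on $\mu$.

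\emph{Step 1 (which rim a cell lies in).} I would show that the cells of a ribbon lie on pairwise distinct diagonals. I would then show that the $m$-th cell from the bottom boundary on the $k$-th diagonal of $\lambda$ belongs to the $m$-th rim $\lambda^{(m)}/\lambda^{(m+1)}$. The reason is that removing the rim deletes exactly the outermost cell of every diagonal that meets $\lambda$. So the $q$-label of a cell in rim $m$ only involves $1-q^m$, and it suffices to show that for each $m$
\[
(1-q^m)^{n_m}=\prod_{c\in(\lambda^{(m)}/\lambda^{(m+1)})\setminus\mu} w_\lambda(c).
\]

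\emph{Step 2 (edges and charging).} Regard the rim $\rho_m=\lambda^{(m)}/\lambda^{(m+1)}$ as a path graph, with edges between rook-adjacent cells. For the set $\rho_m\setminus\mu$, which is a disjoint union of subpaths, the number of components equals the number of vertices minus the number of edges of the induced graph. I would charge each horizontal edge $\{(i,j),(i,j+1)\}$ to its left cell and each vertical edge $\{(i,j),(i+1,j)\}$ to its lower cell.

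Since $\mu$ is a partition, it is closed under moving up or left. Hence if $c\notin\mu$, then its right and lower neighbours are also not in $\mu$. Two consequences follow:
\begin{itemize}
\item every edge charged to a surviving cell is an edge of the induced graph;
\item conversely, every induced edge is charged to a surviving cell.
\end{itemize}
Therefore
\[
n_m=\sum_{c\in\rho_m\setminus\mu}\bigl(1-e(c)\bigr),
\]
where $e(c)\in\{0,1,2\}$ is the number of edges charged to $c$ within $\rho_m$. This gives $(1-q^m)^{n_m}=\prod_{c}(1-q^m)^{1-e(c)}$.

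\emph{Step 3 (matching with the labeling).} It remains to check, for a cell $c$ of rim $m$ on diagonal $k$, three cases.
\begin{itemize}
\item If $\lambda^k<\lambda$, then $e(c)=0$.
\item If $\lambda^k>\lambda$, then $e(c)=2$.
\item If $\lambda^k=\lambda$, then $e(c)=1$.
\end{itemize}
For $m=1$ this is a direct local check on the boundary of $\lambda$:
\begin{itemize}
\item A removable corner on diagonal $k$ is a cell with no right or lower neighbour in the rim, so $e(c)=0$.
\item An addable position on diagonal $k$ means the rim cell on that diagonal has both its right and its lower neighbour in the rim, so $e(c)=2$.
\item A straight boundary piece gives exactly one such neighbour, so $e(c)=1$.
\end{itemize}

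For general $m$, I would argue that near a diagonal $k$ still met by $\lambda^{(m)}$, the boundary of $\lambda^{(m)}$ is the boundary of $\lambda$ translated by $(m-1)(1,1)$. Consequently the corner type of $\lambda^{(m)}$ on diagonal $k$ equals that of $\lambda$. Diagonals that $\lambda^{(m)}$ no longer meets contribute no cells, so they are irrelevant.

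This translation claim is the step I expect to be the main obstacle. In particular, rims near the two axes of the Young diagram get truncated, so their boundary pieces are not literal translates. I would handle it by checking directly that the relevant neighbours of $c$, namely its right and lower neighbours, lie in $\rho_m$ exactly when the corresponding neighbours of the outer cell on diagonal $k$ lie in $\rho_1$. This reduces to the criterion that $(i,j)\in\rho_m$ iff $(i,j)\in\lambda^{(m)}$ and $(i+1,j+1)\notin\lambda^{(m)}$, together with the explicit form $\lambda^{(m)}_i=\min(\lambda_i,\lambda_{i+m-1}+\ldots)$ obtained by iterated rim removal.
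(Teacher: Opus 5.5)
Your argument is correct up to one slip, and it organizes the counting differently from the paper.

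\textbf{The slip.} In Step 2, a vertical edge $\{(i,j),(i+1,j)\}$ must be charged to its \emph{upper} cell $(i,j)$, not its lower one. Only then is the other endpoint of an edge charged to a surviving cell a right or lower neighbour, and hence outside $\mu$. With the convention as you wrote it, $\lambda=(1,1)$, $\mu=(1)$ gives $n_1=1$ but $\sum_c(1-e(c))=0$. Step 3 already uses the upper-cell convention ("right and lower neighbours"), so nothing else changes.

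\textbf{The ``main obstacle''.} This is easier than you fear. A cell $(i,j)$ lies in $\lambda^{(2)}$ iff $(i+1,j+1)\in\lambda$. Iterating,
\[
\lambda^{(m)}=\{(i,j)\in\mathbb{Z}_{\ge1}^2:(i+m-1,j+m-1)\in\lambda\},
\]
i.e.\ $\lambda^{(m)}_i=\max(\lambda_{i+m-1}-m+1,0)$; your $\min$ formula is garbled. Right and lower neighbours of a cell in the quadrant stay in the quadrant, so truncation never interferes. Hence the rim-$m$ cell on diagonal $k$ has the same $e$ as the rim-$1$ cell on diagonal $k$, which is exactly the check you proposed.

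\textbf{Comparison with the paper.} The paper argues with turns. Traversing a rim from bottom-left to top-right, the labels $1-q^m$ sit at left turns and $(1-q^m)^{-1}$ at right turns. Along each component of the rim minus $\mu$ the turns alternate, and the first and last are left turns, so each component contributes exactly one factor $1-q^m$. Your cases $e(c)=0,2,1$ are precisely left turn, right turn and straight, so the local content is the same. What differs is the global count: you use vertices minus edges of a forest, the paper uses alternation of turns.

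Your bookkeeping makes explicit two points the paper leaves implicit:
\begin{enumerate}
\item The ends of components, and of the rim itself, are handled automatically by closure of $\mu$ under up/left moves. In the turn picture this needs the unstated convention that the rim is entered from the left and exited at the top.
\item You justify that the turn type of the $m$-th rim on diagonal $k$ equals the corner type of $\lambda$ on diagonal $k$. The paper asserts this without comment.
\end{enumerate}
The paper's version is shorter and more pictorial.
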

\begin{proof}
    In the ribbon $\lambda^{(k)}/\lambda^{(k+1)}$, the boxes not filled with $1$ are filled with either $(1-q^k)$ or $(1-q^{k})^{-1}$.
    The values $(1-q^k)$ are placed in exactly the corners where the ribbon turns left (when traversing the ribbon from bottom left to top right) and the values $(1-q^k)^{-1}$ are in the boxes where the ribbon turns right.
    After removing the boxes from a partition $\mu$ one can see that every connected component will have exactly one more left turn than right turns, since they alternate, and the first and last turns must be left turns.
\end{proof}

\begin{definition}[Distance along $k$th diagonal]
    If $\mu \leq \lambda$ and if we have $\lambda^k > \lambda$ and $\mu^k > \mu$, then we define $$d_k(\mu, \lambda):= |\{(i,j) \in \lambda : i-j = k\}| - |\{(i,j) \in \mu : i-j=k\}|.$$
\end{definition}

The following proposition demonstrates that $P_k v_\lambda$ can always be written as a convex combination $v_{\lambda}$ and  $v_{\lambda^k}$.

\begin{proposition}[Convex Decomposition]\label{prop:vdynamics}
    For $k\in\mathbb Z$ we have
    \begin{equation}\label{eq:vdynamics}
        P_kv_\lambda=\begin{cases}
            (1-b_2)v_\lambda+b_2v_{\lambda^k}\text{, if $\lambda^k >\lambda$}\\
            (1-b_1)v_\lambda+b_1v_{\lambda^k}\text{, if $\lambda^k<\lambda$}\\
            v_\lambda\text{, else.}
        \end{cases}
    \end{equation}
\end{proposition}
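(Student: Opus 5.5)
We prove \eqref{eq:vdynamics} pointwise: fix $\mu$ and compare $(P_kv_\lambda)(\mu)$ with the right-hand side evaluated at $\mu$. By Definition~\ref{def:Pk}, $(P_kv_\lambda)(\mu)$ is determined by $v_\lambda(\mu)$ and $v_\lambda(\mu^k)$, where $\mu^k$ flips a corner on the $k$th diagonal of $\mu$. If $\mu^k=\mu$, then $P_k$ acts trivially at $\mu$. We then check that $v_{\lambda^k}(\mu)=v_\lambda(\mu)$, since the diagonal-$k$ labels of $\lambda$ and $\lambda^k$ lying in $\lambda/\mu$ coincide or cancel in the relevant product. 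Otherwise the identity to verify reads, for instance when $\mu^k>\mu$,
\[
b_2 v_\lambda(\mu^k)+(1-b_1)v_\lambda(\mu)=(1-b_2)v_\lambda(\mu)+b_2v_{\lambda^k}(\mu)
\]
in the case $\lambda^k>\lambda$, and analogous identities hold in the other cases. The tool throughout is Lemma~\ref{lem:alternatevlambda}: every $v$ is $q^{|\mu|}$ times a product of labels over $\lambda/\mu$. Adding or removing one box on diagonal $k$ changes only diagonal-$k$ labels and a factor $q^{\pm1}$. Since $b_1=qb_2$, everything reduces to a one-diagonal computation in the variable $q$.

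\textbf{Case $\lambda^k=\lambda$.} Diagonal $k$ of $\lambda$ has no addable or removable corner, so its labels are all $1$. The claim is $P_kv_\lambda=v_\lambda$, meaning $v_\lambda$ is $P_k$-invariant. If $\mu^k>\mu$ with $\mu^k\le\lambda$, the added box has label $1$, so $v_\lambda(\mu^k)=q\,v_\lambda(\mu)$. Then $b_2 v_\lambda(\mu^k)+(1-b_1)v_\lambda(\mu)=(b_2q+1-b_1)v_\lambda(\mu)=v_\lambda(\mu)$, a local detailed-balance check. If instead $\mu^k\not\le\lambda$, then $\mu$ reaches the boundary on diagonal $k$. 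But this would force a corner of $\lambda$ on diagonal $k$, which is excluded. The case $\mu^k<\mu$ is symmetric.

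\textbf{Cases $\lambda^k\neq\lambda$.} Take $\lambda^k>\lambda$; the other case is symmetric, obtained by swapping the roles of $\lambda$ and $\lambda^k$. We use $d=d_k(\mu,\lambda)$, the number of diagonal-$k$ boxes of $\lambda/\mu$. Their labels in $\lambda$ are $(1-q)^{-1},\dots,(1-q^d)^{-1}$. In $\lambda^k$ the diagonal has one more box and the labels become $(1-q),\dots,(1-q^{d+1})$. Labels on other diagonals are unchanged, because flipping a diagonal-$k$ corner does not change whether diagonals $k\pm1$ are addable or removable. Hence $v_{\lambda^k}(\mu)/v_\lambda(\mu)$ is an explicit product of $q$-factors depending only on $d$. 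Similarly, $v_\lambda(\mu^k)/v_\lambda(\mu)$ is $q$ times the inverse of the removed diagonal label, which is $q(1-q^d)$. One must track whether the box removed by passing to $\mu^k$ is the one carrying label $(1-q^d)^{\pm1}$, the label nearest $\mu$'s boundary. The case $\mu^k<\mu$, where $\mu$ has a removable box on diagonal $k$, gives $d=0$ on that side and is handled the same way.

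\textbf{Main obstacle.} The identity then becomes a single rational identity in $q$ and $b_2$, which one expands and checks. I expect the main difficulty to be this bookkeeping: the labeling is counted from the bottom boundary of the diagonal, so removing $\mu$'s boxes shifts which labels remain. The claim that $v_{\lambda^k}/v_\lambda$ is a clean telescoping ratio needs care, since diagonal $k$ of $\lambda^k$ has length one more than that of $\lambda$. I would first verify small examples, such as $\lambda=()$, $\lambda=(1)$, $\lambda=(2,1)$, to fix the indexing conventions before writing the general computation.
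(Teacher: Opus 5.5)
\textbf{The gap: labels on diagonals $k\pm1$ change when you pass from $\lambda$ to $\lambda^k$.}

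Your overall plan is the paper's: a pointwise check at each $\mu$, split by how $\lambda^k$ compares with $\lambda$ and $\mu^k$ with $\mu$, using Lemma~\ref{lem:alternatevlambda} and $b_1=qb_2$. The case $\lambda^k=\lambda$ is handled correctly, and so is the ratio $v_\lambda(\mu^k)=q(1-q^{d})v_\lambda(\mu)$. But the step that carries the cases $\lambda^k\neq\lambda$ rests on a false claim: that passing from $\lambda$ to $\lambda^k$ leaves the labels on diagonals $k\pm1$ unchanged.

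It never does. In particle terms, $\lambda\mapsto\lambda^k$ changes both $\eta(k)$ and $\eta(k+1)$. So the pairs $(\eta(k-1),\eta(k))$ and $(\eta(k+1),\eta(k+2))$ always change type. When a box is added, a removable neighbouring diagonal becomes neutral, or a neutral one becomes addable. Either way, the $r$-th label from the outer end on each of diagonals $k\pm1$ is multiplied by $(1-q^r)^{-1}$.

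Your one-diagonal bookkeeping therefore gives $v_{\lambda^k}(\mu)/v_\lambda(\mu)=(1-q^{d+1})\prod_{r=1}^{d}(1-q^r)^2$. The identity you need, $b_2q(1-q^d)+1-b_1=(1-b_2)+b_2R$, forces $R=1-q^{d+1}$.

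\emph{Example with $\mu^k>\mu$.} Take $\lambda=(2,1)$, $k=0$, $\mu=()$, so $d=1$. The boxes $(1,2)$ and $(2,1)$ carry label $1-q$ in $\lambda$ but label $1$ in $\lambda^0=(2,2)$. The true ratio is $v_{(2,2)}(())/v_{(2,1)}(())=(1-q)(1-q^2)/(1-q)=1-q^2$, not $(1-q)^2(1-q^2)$.

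\emph{The same error in your $\mu^k=\mu$ case.} There $\lambda^k/\mu$ has one more diagonal-$k$ box than $\lambda/\mu$, and the diagonal-$k$ labels flip from $(1-q^r)^{-1}$ to $(1-q^r)$. So diagonal $k$ alone never yields ratio $1$. For $\lambda=(2,1)$, $k=0$, $\mu=(2)$, the new box $(2,2)$ contributes $1-q$. It is cancelled only because the label of $(2,1)$ on diagonal $1$ drops from $1-q$ to $1$.

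\emph{The mixed case.} The case $\lambda^k>\lambda$, $\mu^k<\mu$ is not a ``$d=0$'' case. What it requires is $v_{\lambda^k}(\mu)=q\,v_\lambda(\mu^k)$, which again involves the neighbouring diagonals.

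\textbf{How to repair it.} The missing observation is a count of boxes in $\lambda/\mu$.
\begin{itemize}
\item When $\mu^k>\mu$, both $\lambda$ and $\mu$ have an addable box on diagonal $k$. Then $\lambda/\mu$ contains exactly $d$ boxes on each of diagonals $k-1$, $k$, $k+1$. The neighbours therefore contribute $\prod_{r=1}^d(1-q^r)^{-2}$, which cancels the surplus from diagonal $k$ and leaves $1-q^{d+1}$.
\item Analogous counts handle $\mu^k=\mu$ and the relation $v_{\lambda^k}(\mu^k)=qv_\lambda(\mu)$.
\item Alternatively, as the paper indicates, check these relations directly from the rim/connected-component description in Definition~\ref{def:vn}.
\end{itemize}
Once you have $v_{\lambda^k}(\mu)=v_\lambda(\mu)$ for $\mu^k=\mu$, and $v_{\lambda^k}(\mu)=(1-q^{d+1})v_\lambda(\mu)$ and $v_{\lambda^k}(\mu^k)=qv_\lambda(\mu)$ for $\mu^k>\mu$, the nine-case verification is routine.
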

\begin{proof}
    This statement will follow from the following properties of $v_\lambda$.
    \begin{enumerate}
        \item  If $\lambda^k=\lambda$ and $\mu^k>\mu$ we have
    \begin{equation}\label{eq:relation1}
    v_\lambda(\mu^k)=qv_\lambda(\mu)\,.
    \end{equation}
    
    \item If $\lambda^k>\lambda$ and $\mu^k=\mu$ we have
    \begin{equation}\label{eq:relationflatmu}
        v_{\lambda^k}(\mu)=v_\lambda(\mu)\,.
    \end{equation}
    \item If $\lambda^k>\lambda$ and $\mu^k>\mu$, we have
    \begin{equation}\label{eq:relationlambdamu}
        v_{\lambda^k}(\mu)=(1-q^{d_k(\mu, \lambda)+1})v_\lambda(\mu),\quad v_{\lambda}(\mu^k)=q(1-q^{d_k(\mu, \lambda)})v_\lambda(\mu)\text{, and }v_{\lambda^k}(\mu^k)=qv_\lambda(\mu)\,.
    \end{equation}
\end{enumerate}
    We have that \eqref{eq:relation1} and the second equation in \eqref{eq:relationlambdamu} follow from Lemma~\ref{lem:alternatevlambda} by examining which weight is assigned to the box being added to $\mu$.
    Proving the other equations just amounts to either checking how the product in Lemma \ref{lem:alternatevlambda} changes when going from $\lambda$ to $\lambda^k$, or by directly considering the product in Definition~\ref{def:vn}.

   By the definition of $P_k$ we have 
\begin{align}
     P_kv_{\lambda}(\mu)&=\begin{cases}\label{eq:P_kv_lbydef}
        b_2 v_{\lambda}(\mu^k)+(1-b_1)v_{\lambda}(\mu), &\text{ if }\mu^k > \mu \\
        b_1v_{\lambda}(\mu^k)+(1-b_2)v_\lambda(\mu), &\text{ if }\mu^k < \mu\\
        v_{\lambda}(\mu), &\text{ if }\mu^k  = \mu\,.
    \end{cases}
\end{align}

There are nine cases to check: for each of the three cases $\lambda^k>\lambda$, $\lambda^k=\lambda$ and $\lambda^k<\lambda$, we need to check that the right hand side of \eqref{eq:P_kv_lbydef} equals the right hand side of \eqref{eq:vdynamics} applied to $\mu$ again for $\mu$ such that $\mu^k>\mu$, $\mu^k=\mu$, or $\mu^k<\mu$.

The cases where $\mu^k=\mu$ follow from \eqref{eq:relationflatmu}, since \eqref{eq:relationflatmu} implies that the right hand side of \eqref{eq:vdynamics} equals $v_\lambda(\mu)$ for all $\mu$.
The cases where $\lambda^k=\lambda$ follow from \eqref{eq:relation1}:
\begin{align*}
    b_2v_\lambda(\mu^k)+(1-b_1)v_\lambda(\mu)&=b_2  q v_{\lambda}(\mu)+(1-b_1)v_{\lambda}(\mu)=v_\lambda(\mu)\\
    b_1v_{\lambda}(\mu^k)+(1-b_2)v_\lambda(\mu)&=b_1q^{-1}v_{\lambda}(\mu)+(1-b_2)v_\lambda(\mu)=v_\lambda(\mu)\,,
\end{align*}
where in both equations we used first \eqref{eq:relation1} (once for $\lambda$ and $\mu$ and once for $\lambda$ and $\mu^k$), and then $q=b_1/b_2$.

In the case where $\lambda^k<\lambda$ and $\mu^k>\mu$ we can use the third equation in \eqref{eq:relationlambdamu} to see that $v_{\lambda}(\mu^k)=qv_{\lambda^k}(\mu)$ to get
\[
b_2v_\lambda(\mu^k)+(1-b_1)v_\lambda(\mu)=qb_2v_{\lambda^k}(\mu)+(1-b_1)v_\lambda(\mu)=b_1v_{\lambda^k}(\mu)+(1-b_1)v_\lambda(\mu)\,.
\]
The case in which $\lambda^k>\lambda$ and $\mu^k<\mu$ is analogous using $v_{\lambda}(\mu^k)=q^{-1}v_{\lambda^k}(\mu)$, which again comes from the third equation in \eqref{eq:relationlambdamu}.

Let us now consider the case where $\lambda^k>\lambda$ and $\mu^k>\mu$.
Then the right hand side of \eqref{eq:P_kv_lbydef} equals
\begin{align*}
    b_2v_\lambda(\mu^k)+(1-b_1)v_\lambda(\mu)&=(b_2q(1-q^{d_k(\mu,\lambda)})+1-b_1)v_\lambda(\mu)\\
    &=(1-b_2q^{{d_k(\mu,\lambda)}+1})v_\lambda(\mu)\\
    &=((1-b_2)+b_2(1-q^{{d_k(\mu,\lambda)}+1}))v_\lambda(\mu)\\
    &=(1-b_2)v_\lambda(\mu)+b_2v_{\lambda^k}(\mu)\,,
\end{align*}
which equals the right hand side of \eqref{eq:vdynamics}. Here we used the second equation of \eqref{eq:relationlambdamu} in the first equality and the second equation of \eqref{eq:relationlambdamu} in the third equality.

Finally in the case $\lambda^k<\lambda$ and $\mu^k<\mu$ we can use first the first and third equations in \eqref{eq:relationlambdamu} to obtain $v_\lambda(\mu^k)=(1-q^{{d_k(\mu^k,\lambda^k)}+1})v_{\lambda^k}(\mu^k)=(1-q^{{d_k(\mu^k,\lambda^k)}+1})q^{-1}v_{\lambda}(\mu)$ and the second and third equations to obtain $v_{\lambda^k}(\mu)=(1-q^{d_k(\mu^k,\lambda^k)})v_\lambda(\mu)$. We then proceed as in the previous case.
\end{proof}

\begin{corollary}
    For any $\lambda$ we have
    \[
    \sum_\mu v_{\lambda}(\mu)=1\,.
    \] In other words, we can interpret $v_{\lambda}$ as a probability measure on the set of partitions. 
\end{corollary}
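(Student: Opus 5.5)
Since each $P_k$ preserves total mass, Proposition~\ref{prop:vdynamics} lets us prove $\sum_\mu v_\lambda(\mu)=1$ by induction on $|\lambda|$. The key point is that $v_\lambda$ has finite support, namely $\{\mu:\mu\le\lambda\}$, so the total mass $S(\lambda):=\sum_\mu v_\lambda(\mu)$ is a finite sum and mass conservation can be applied without any convergence issues.

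\emph{Base case.} For $\lambda=()$, the only $\mu\le()$ is $()$ itself. The rim decomposition of $()$ is empty, so every $n_k=0$, and Definition~\ref{def:vn} gives $v_{()}(())=q^0=1$. Hence $S(())=1$.

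\emph{Mass conservation for $P_k$.} For finitely supported $\nu$ we have $\sum_\eta P_k\nu(\eta)=\sum_\eta\nu(\eta)$. This is immediate from \eqref{eq:PkonDirac}: each Dirac mass is sent to a convex combination of Dirac masses, and $P_k$ is linear.

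\emph{Inductive step.} Let $\lambda\neq()$ and pick a removable corner of $\lambda$, lying on some diagonal $k$. Put $\lambda'=\lambda^k$; then $\lambda'<\lambda$, $|\lambda'|=|\lambda|-1$, and $(\lambda')^k=\lambda>\lambda'$. The first case of Proposition~\ref{prop:vdynamics}, applied to $\lambda'$, gives
\[
P_kv_{\lambda'}=(1-b_2)v_{\lambda'}+b_2v_{\lambda}.
\]
Summing over $\mu$ and using mass conservation,
\[
S(\lambda')=(1-b_2)S(\lambda')+b_2S(\lambda),
\]
so $b_2S(\lambda)=b_2S(\lambda')$. Since $b_2>0$, this gives $S(\lambda)=S(\lambda')=1$ by the induction hypothesis.

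\emph{Nonnegativity.} Each factor $q^{|\mu|}$ and $(1-q^k)^{n_k}$ is nonnegative because $0<q<1$, so $v_\lambda(\mu)\ge 0$. Combined with $S(\lambda)=1$, this shows $v_\lambda$ is a probability measure.

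The only point needing care is the one addressed at the start: all sums involved are finite because of the finite support of $v_\lambda$ and the fact that $P_k$ acts locally. Given Proposition~\ref{prop:vdynamics}, everything else is routine.
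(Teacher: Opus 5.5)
Your proof is correct and follows the same route as the paper: start from $v_{()}=\delta_{()}$, then use the first case of Proposition~\ref{prop:vdynamics} with the mass-preservation of $P_k$ to get that $v_{\lambda^k}$ has total mass $1$, and read nonnegativity off Definition~\ref{def:vn}. Your version just writes out the finite-support and mass-conservation details that the paper leaves implicit.
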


\begin{proof}
    If $\lambda = ()$ then this is true. We can then prove this inductively using  Proposition \ref{prop:vdynamics} to express $v_{\lambda^k}$ as a linear combination of $P_kv_\lambda$ and $v_\lambda$ with coefficients summing to $1$, which shows that the total mass of $v_{\lambda^k}$ is equal to $1$.
Non-negativity of $v_{\lambda^k}(\mu)$ is immediate from Definition~\ref{def:vn}.  
\end{proof}

\subsection{Proof of Theorem \ref{thm:product}}

\begin{definition}
    A decreasing subset $E$ of $\mathcal P$ is a set that satisfies
    \[
    \lambda\in E\text{ and } \mu\leq\lambda\implies\mu\in E\,.
    \]
\end{definition}
\begin{remark}
    Note that the conceptual advantage of working with decreasing instead of increasing events, is that they can be finite.
However, this is not actually important in what follows, and one could just as well have worked with increasing sets.
\end{remark}

\begin{proposition}\label{prop:monotonicity}
    For $\lambda\leq\lambda'$ it holds that $v_\lambda\preceq v_{\lambda'}$, i.e., for any decreasing set $E\subset\mathcal P$ and any $\lambda\leq\lambda'$ it holds that
    \begin{equation}
        v_\lambda(E)\geq v_{\lambda'}(E).
    \end{equation}
\end{proposition}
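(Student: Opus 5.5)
It suffices to treat the case $\lambda'=\lambda^k>\lambda$ for a single $k$, since any $\lambda\leq\lambda'$ can be joined by a chain of partitions, each obtained from the previous one by adding one box at an addable corner. Domination $\preceq$ is transitive, so the general statement then follows by concatenating the single-step comparisons. So I fix $\lambda$ and $k$ with $\lambda^k>\lambda$ and a decreasing set $E$, and I aim to show $v_\lambda(E)\geq v_{\lambda^k}(E)$.

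The plan is to compare $v_\lambda$ and $v_{\lambda^k}$ pointwise using the relations \eqref{eq:relationflatmu} and \eqref{eq:relationlambdamu} from the proof of Proposition~\ref{prop:vdynamics}. I split partitions $\mu\leq\lambda^k$ according to the shape of $\mu$ near diagonal $k$. If $\mu^k=\mu$, then $v_{\lambda^k}(\mu)=v_\lambda(\mu)$. If $\mu^k<\mu$, then $\mu$ has a removable box on diagonal $k$; either $\mu\not\leq\lambda$, in which case $\mu$ contains the new box of $\lambda^k$, or $\mu\leq\lambda$. In either subcase $\mu$ is paired with $\nu=\mu^k<\mu$, which satisfies $\nu^k>\nu$. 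The remaining partitions have $\mu^k>\mu$, and those also arise as such $\nu$'s. So the natural decomposition is into singletons $\{\mu\}$ with $\mu^k=\mu$ and pairs $\{\nu,\nu^k\}$ with $\nu^k>\nu$. On a pair, \eqref{eq:relationlambdamu} with $d=d_k(\nu,\lambda)$ gives
\[
v_\lambda(\nu)=1\cdot v_\lambda(\nu),\quad v_\lambda(\nu^k)=q(1-q^{d})v_\lambda(\nu),\quad v_{\lambda^k}(\nu)=(1-q^{d+1})v_\lambda(\nu),\quad v_{\lambda^k}(\nu^k)=qv_\lambda(\nu).
\]
The total mass of the pair is $(1+q-q^{d+1})v_\lambda(\nu)$ under both measures. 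Under $v_{\lambda^k}$ more mass sits on the larger element $\nu^k$, since $q\geq q(1-q^d)$. Singletons carry equal mass under both measures.

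The key step is to conclude from this that $v_\lambda(E)\geq v_{\lambda^k}(E)$. Since $E$ is decreasing, for each pair either both elements lie in $E$, or only $\nu$ does, or neither does; the case where only $\nu^k\in E$ cannot occur because $\nu\leq\nu^k$. In the three possible cases the contribution of the pair to $v_\lambda(E)-v_{\lambda^k}(E)$ is respectively $0$, $(1-1+q^{d+1})v_\lambda(\nu)\geq0$, and $0$. Singletons contribute $0$. Summing over the decomposition gives the inequality. This amounts to a coupling which, within each pair, moves mass from $\nu$ up to $\nu^k$.

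The main point to verify carefully is that the pairing $\nu\leftrightarrow\nu^k$ is a genuine partition of all $\mu\leq\lambda^k$, and that \eqref{eq:relationlambdamu} applies in each pair. This includes the boundary case $\nu^k\not\leq\lambda$, i.e.\ $\nu^k$ containing the new box of $\lambda^k$, which forces $d=0$ and $v_\lambda(\nu^k)=0$, consistent with the formula. I also need $d_k(\nu,\lambda)\geq 0$ to be well defined, which holds since $\nu\leq\lambda$ whenever $\nu^k>\nu$ and $\nu^k\leq\lambda^k$. For that I would check that a partition with an addable box on diagonal $k$ contained in $\lambda^k$ cannot contain the new corner box. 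Once this bookkeeping is confirmed, the proof is complete.
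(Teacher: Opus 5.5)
Your proof is correct. It is the static version of the paper's argument: the computation is the same, but it is packaged without the dynamics.

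\textbf{The paper's route.} The paper also reduces to $\lambda'=\lambda^k$. It then uses Proposition~\ref{prop:vdynamics} to write $P_kv_\lambda=(1-b_2)v_\lambda+b_2v_{\lambda^k}$, so it suffices to show $P_kv_\lambda(E)\le v_\lambda(E)$. This is proved with the one-step coupling $\mu_0\sim v_\lambda$, $\mu_1\sim P_k\delta_{\mu_0}$. The net flux of this coupling across the boundary of $E$ is computed using only the second relation in \eqref{eq:relationlambdamu}, giving $-\sum_{\mu\in E,\,\mu^k\notin E} b_1q^{d_k(\mu,\lambda)}v_\lambda(\mu)$.

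\textbf{Your route.} You compare $v_\lambda$ and $v_{\lambda^k}$ directly on the orbits of the involution $\mu\mapsto\mu^k$, using \eqref{eq:relationflatmu} and all three relations in \eqref{eq:relationlambdamu}. Dividing the paper's identity by $b_2$ (and using $q=b_1/b_2$) gives exactly your formula $v_\lambda(E)-v_{\lambda^k}(E)=\sum q^{d_k(\nu,\lambda)+1}v_\lambda(\nu)$, summed over pairs with $\nu\in E$ and $\nu^k\notin E$.

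\textbf{What each buys.} Your argument never mentions $b_1$, $b_2$ or $P_k$, which is natural since $v_\lambda$ depends only on $q$. It also produces an explicit monotone coupling: in each pair, move mass $q^{d+1}v_\lambda(\nu)$ from $\nu$ up to $\nu^k$. The paper's argument is shorter once Proposition~\ref{prop:vdynamics} is available, because that proposition already packages the relations you invoke separately. It also fits the paper's dynamical viewpoint.

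\textbf{The bookkeeping you flagged.} It checks out. Let $(i,j)$ be the corner added to $\lambda$ to form $\lambda^k$. If $\mu\subseteq\lambda^k$ contains $(i,j)$, then $(i,j+1)\notin\mu$, so $\mu$ has no addable cell on diagonal $k$. Hence every $\nu\le\lambda^k$ with $\nu^k>\nu$ satisfies $\nu\le\lambda$. The addable cell of such a $\nu$ lies in $\lambda$ when $d_k(\nu,\lambda)>0$ and is exactly $(i,j)$ when $d_k(\nu,\lambda)=0$. In the latter case $v_\lambda(\nu^k)=0$, consistent with the formula. So the orbits $\{\mu\}$ with $\mu^k=\mu$ and $\{\nu,\nu^k\}$ with $\nu^k>\nu$ partition $\{\mu\le\lambda^k\}$, and the relations apply on every pair.
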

\begin{proof}
    Without loss of generality $\lambda'=\lambda^k$ for some $k$ since any $\lambda'\geq \lambda$ can be obtained from $\lambda$ by a sequence of adding boxes.
    Then we know that
    \[
    (P_kv_\lambda)(E)=(1-b_2)v_\lambda(E)+b_2v_{\lambda^k}(E)\,.
    \]
    Therefore, it suffices to show that $P_kv_\lambda(E)\leq v_\lambda(E)$.
    Let $\mu_0\sim v_\lambda$ and conditioned on $\mu_0$ let $\mathcal L(\mu_1|\mu_0)\sim P_k\delta_{\mu_0}$.
    Then $\mu_1\sim P_kv_\lambda$.
    Now we can write
    \begin{align}
        P_kv_\lambda(E)- v_\lambda(E)&=\mathbb P(\mu_1\in E)-\mathbb P(\mu_0\in E) \notag\\
        &=\mathbb P(\mu_1\in E,\mu_0\notin E)-\mathbb P(\mu_1\notin E,\mu_0\in E)\notag\\
        &=\sum_{\mu\in E:\mu^k\notin E}\left[\mathbb P(\mu_1=\mu,\mu_0=\mu^k)-\mathbb P(\mu_1=\mu^k,\mu_0=\mu) \right]\label{eq:mon}.
\end{align}
We can compute \eqref{eq:mon} in terms of $v_{\lambda}$ to obtain
\begin{align*}
        \sum_{\mu\in E:\mu^k\notin E}\left[b_2v_\lambda(\mu^k)-b_1v_\lambda(\mu)\right]&=\sum_{\mu\in E:\mu^k\notin E}\left[b_2q(1-q^{d_k( \mu, \lambda)})v_\lambda(\mu)-b_1v_\lambda(\mu)\right]\\
        &=-\sum_{\mu\in E:\mu^k\notin E}b_1q^{d_k( \mu, \lambda)}v_\lambda(\mu)\\
        &\leq 0.
    \end{align*}
\end{proof}

As $\lambda$ becomes bigger, the distribution $v_\lambda$ converges to the blocking measure.
\begin{lemma}
    If $(\lambda^{[n]})_{n\geq0}$ is any sequence of partitions such that $\lambda^{[n]}_i\to\infty$ for any $i$, then $v_{\lambda^{[n]}}(\mu)\to\pi(\mu)= q^{|\mu|}\prod_{i=1}^{\infty} (1- q^i)$.
\end{lemma}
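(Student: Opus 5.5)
Fix $\mu$. The plan is to use the $q$-labeling description of Lemma~\ref{lem:alternatevlambda}. Writing $\lambda=\lambda^{[n]}$, we have
\[
v_\lambda(\mu)=q^{|\mu|}\prod_{(i,j)\in\lambda/\mu}w_\lambda(i,j),
\]
so it suffices to show that $\prod_{(i,j)\in\lambda/\mu}w_\lambda(i,j)\to\prod_{i\ge1}(1-q^i)$.

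It is cleaner to work with the rim description of Definition~\ref{def:vn}: $v_\lambda(\mu)=q^{|\mu|}\prod_k(1-q^k)^{n_k}$, where $n_k$ counts the connected components of the $k$-th rim $\lambda^{(k)}/\lambda^{(k+1)}$ after the boxes of $\mu$ are deleted. The goal is to show that for $n$ large, $n_k(\lambda^{[n]},\mu)=1$ for every $k$ with $\lambda^{(k)}\neq()$, and that the remaining factors converge to the infinite product.

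First, fix $m$ larger than both $\ell(\mu)$ and $\mu_1$. Since $\lambda^{[n]}_i\to\infty$ for each $i\le m$, for large $n$ the diagram $\lambda^{[n]}$ contains a box $[1,M]\times[1,M]$ with $M\gg m$; more precisely, $\lambda_m\ge M$. Its rim decomposition then has at least $M$ rims. The $k$-th rim of a partition containing the $M\times M$ square, for $k\le M$, is a connected ribbon. Its intersection with the square is the hook-shaped set of boxes $(i,j)$ with $\min(i,j)$ relevant, namely the "$k$-th outer layer." I need to be careful here: rims are peeled from the outside, so the $k$-th rim passes through rows and columns near the boundary of $\lambda$, not near $\mu$.

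Key step: $\mu$ sits in the top-left $m\times m$ corner. The $k$-th rim meets that corner only if it comes within distance $m$ of the origin, i.e.\ only for the deepest rims, those with index near the total number of rims $r$. For $k\le r-2m$ (roughly), the rim avoids $\mu$ entirely and contributes exactly $(1-q^k)^1$. The deepest rims (index $r-2m<k\le r$) may be split by $\mu$, but their factors $(1-q^k)^{n_k}$ with $k> r-2m\to\infty$ tend to $1$ provided $n_k$ stays bounded. It does: $n_k\le 1+|\mu|$, since removing each box creates at most one extra component. Since $r\ge M\to\infty$, the product converges to $\prod_{k\ge1}(1-q^k)$. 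Dominated convergence of the infinite product is immediate because $\sum q^k<\infty$.

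The main obstacle is justifying that the shallow rims avoid $\mu$ and that the number of rims tends to infinity. For this I would use the diagonal description: the $k$-th rim of $\lambda$ consists of exactly those boxes $(i,j)$ whose "depth" $\min$ over the rim structure equals $k$. Equivalently, box $(i,j)$ lies in rim $k$, where $k-1$ is the largest $t$ with $(i+t,j+t)\in\lambda$ removed (i.e.\ $k=1+\#\{t\ge1:(i+t,j+t)\in\lambda\}$). So a box of $\mu$, with $i,j\le m$, lies in rim $k\ge \lambda_m - m\to\infty$ once $\lambda_{2m}\ge 2m$ grows, because $(i+t,j+t)\in\lambda$ for all $t\le\min(\ell,\lambda_{i+t})$. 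Hence every box of $\mu$ lies only in rims of index tending to infinity. Rims of small index are untouched, and each untouched nonempty rim contributes $1-q^k$. Every $k\le$ (depth) gives a nonempty rim, so all small $k$ appear, which completes the convergence argument.
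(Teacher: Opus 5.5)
Your argument is correct in substance, but it takes a different route from the paper.

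The paper first computes $v_\lambda$ explicitly for square partitions $(n^n)$. Their rims are hooks, which $\mu$ either trims at both ends (one component) or swallows completely (no component), giving $q^{|\mu|}\prod_{i=1}^{n-d(\mu)}(1-q^i)$. It then sandwiches a general $\lambda^{[n]}$ between its Durfee square and the smallest square containing it. Proposition~\ref{prop:monotonicity} then gives $v_{\lambda^{[n]}}(E)\to\pi(E)$ for decreasing $E$, and hence pointwise convergence.

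You instead bound the rim product directly for arbitrary $\lambda$. Let $D$ be the side of the Durfee square of $\lambda^{[n]}$ and $m>\max(\ell(\mu),\mu_1)$. Every box of $\mu$ lies in a rim of index at least $D-m+1$, so rims $1,\dots,D-m$ contribute exactly $\prod_{k\le D-m}(1-q^k)$. The at most $m$ remaining nonempty rims each have $n_k\le 1+|\mu|$, because a rim is a path. Together they contribute a factor in $[(1-q^{D-m+1})^{m(1+|\mu|)},1]$.

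What each route buys: yours avoids monotonicity entirely and gives an explicit error of order $q^{D-m}$. The price is that you must control $n_k>1$ for the deep rims, which never happens for squares. The paper's version is shorter because monotonicity is already available, and it is reused in the next corollary.

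Two statements in your write-up need fixing, though neither affects the strategy.

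\begin{itemize}
\item Your stated goal that $n_k=1$ for every nonempty rim is false for the deepest rims, for instance the one containing $(1,1)$ when $\mu\neq()$. Your later bound on $n_k$ is exactly what handles this, so drop the goal rather than the argument.
\item The inequality $k\ge\lambda_m-m$ in your last paragraph is wrong. If $\lambda$ has rows $1,\dots,m$ of length $n^2$ and rows $m+1,\dots,n$ of length $n$, then $(1,1)$ lies in rim $n$, well below $n^2-m$. The correct bound is $k\ge D-m+1$, which is what your $M\times M$ square gives. That square requires $\lambda_M\ge M$, not $\lambda_m\ge M$. Finally, $D\to\infty$ because, for each fixed $N$, the hypothesis gives $\lambda^{[n]}_N\ge N$ for all large $n$.
\end{itemize}
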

\begin{proof}
    Consider first the square partition $b^{[n]}=(n,n,\dots,n)$.
   We have 
    \[
    v_{b^{[n]}}(\mu)=q^{|\mu|}\prod_{i=1}^{n-d(\mu)}(1-q^i)\,,
    \]
    where $d(\mu)$ is the side length of the Durfee square of $\mu$, i.e., the largest square that fits inside $\mu$.
    As $n\to\infty$, this clearly converges to  $q^{|\mu|}\prod_{i=1}^{\infty}(1-q^i)$ for any $\mu$.
    Now consider $\lambda^{[n]}$ as in the statement.
    Let $b(\lambda^{[n]})$ be the largest square partition contained in $\lambda^{[n]}$ and $B(\lambda^{[n]})$ the smallest square partition that contains $\lambda^{[n]}$.
    By the assumption on $\lambda^{[n]}$ both $b(\lambda^{[n]})$ and $B(\lambda^{[n]})$ increase to infinity as $n\to\infty$.
    By Proposition~\ref{prop:monotonicity} we have $v_{b(\lambda^{[n]})}\preceq v_{\lambda^{[n]}}\preceq v_{B(\lambda^{[n]})}$.
    Since both $v_{b(\lambda^{[n]})},v_{B(\lambda^{[n]})}$ converge to $\pi$, this implies that $v_{\lambda^{[n]}}(E)\to \pi(E)$ for any decreasing $E$, which implies that $v_{\lambda^{[n]}}\to\pi$.
\end{proof}

\begin{corollary}
    For any $\lambda$, $v_\lambda\preceq\pi$.
\end{corollary}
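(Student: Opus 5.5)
The plan is to combine Proposition~\ref{prop:monotonicity} with the preceding lemma on convergence of $v_{\lambda^{[n]}}$ to $\pi$. Fix $\lambda$ and consider the square partitions $b^{[n]}=(n,\dots,n)$ with $n$ rows. For every $n$ at least the maximum of $\lambda_1$ and the number of nonzero parts of $\lambda$, we have $\lambda\leq b^{[n]}$ in the containment ordering~\eqref{eq:ordering}.

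By Proposition~\ref{prop:monotonicity}, for any decreasing set $E\subset\mathcal P$ this gives $v_\lambda(E)\geq v_{b^{[n]}}(E)$. Since $b^{[n]}_i=n\to\infty$ for every fixed $i$, the lemma gives $v_{b^{[n]}}(\mu)\to\pi(\mu)$ pointwise. Because $\pi$ and all $v_{b^{[n]}}$ are probability measures on the countable set $\mathcal P$, Scheffé's lemma upgrades this to convergence in total variation, so $v_{b^{[n]}}(E)\to\pi(E)$ for every $E$. Letting $n\to\infty$ yields $v_\lambda(E)\geq\pi(E)$ for all decreasing $E$.

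It remains to pass from this inequality on decreasing sets to the coupling formulation of $\preceq$, where $\mu_\lambda\subseteq\mu_\pi$ is realized on one probability space. The partial order on $\mathcal P$ is the containment order, which is a countable poset. In this setting Strassen's theorem applies: a coupling exists precisely when the upper-set, equivalently lower-set, probabilities are ordered. Alternatively, since the $v_{b^{[n]}}\preceq$ relation is already available by monotone couplings and these couplings are tight, one can take a weak limit of the couplings of $(v_\lambda,v_{b^{[n]}})$. This limiting step is the main technical point: tightness holds because the second marginals converge, and the closed relation $\{\mu\leq\mu'\}$ is preserved under the limit because it is a closed subset of a discrete product space. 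Either route gives $v_\lambda\preceq\pi$.
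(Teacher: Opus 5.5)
Your proof is correct and follows the paper's argument: the paper's proof is the one-line observation that $v_\lambda\preceq v_{b^{[n]}}$ for all large $n$ by Proposition~\ref{prop:monotonicity}, and that $v_{b^{[n]}}\to\pi$ by the preceding lemma. Your additional steps make explicit what the paper leaves implicit. Scheffé's lemma justifies the limit on possibly infinite decreasing sets (Fatou's lemma would already give the needed direction $\liminf_n v_{b^{[n]}}(E)\geq\pi(E)$), and Strassen's theorem lets you pass from decreasing-set inequalities to a coupling.
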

\begin{proof}
    This follows from $v_\lambda\preceq v_{b^{[n]}}\to\pi$, where the inequality holds for all $n$ large enough.
\end{proof}

We are now ready to prove Theorem~\ref{thm:product}.

\begin{proof}[Proof of Theorem~\ref{thm:product}]
Note that $\delta_{\bm{1}_{x>0}}=v_{()}$.
By applying Proposition~\ref{prop:vdynamics} repeatedly we have
    \begin{equation}\label{eq:convexcomb}
    \prod_{i=1}^nP_{k_i}\delta_{\bm{1}_{x>0}}=\sum_\lambda c_\lambda v_\lambda\,,
    \end{equation}
where $c_\lambda\geq0 $ for all $\lambda$ and $\sum_\lambda c_\lambda =1$.
Since all $v_\lambda$ are dominated by $\pi$, this implies that the product on the left is as well.
\end{proof}
\section{Further applications}\label{sec:app}
Now that Theorem~\ref{thm:censoring} has been established, we will consider what else its proof reveals about the stochastic six-vertex process.

\subsection{Symmetries}
A closer consideration of Proposition~\ref{prop:vdynamics} shows that the coefficients $c_\lambda$ appearing in the proof of Theorem~\ref{thm:product} are not just any convex combination. In particular, we get the following intertwining relation.
\begin{theorem}\label{thm:sym}
    Let $\mathbb P^{b_1,b_2}$ be the law of the $\mathcal C$-censored stochastic six-vertex process started from $\bm{1}_{x>0}$ as considered in Theorem~\ref{thm:censoring} and let $\mathbb P^{b_2,b_1}$ be the same process with the parameters $b_1$ and $b_2$ exchanged.
    Then for every time $t$ and every partition $\mu$
    \[
    \mathbb P^{b_1,b_2}(\eta_t=\mu)=\mathbb E^{b_2,b_1}(v_{\eta_t}(\mu))\,,
    \]
    where we identify particle configurations and partitions as usual.
\end{theorem}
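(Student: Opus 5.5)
The plan is to work at the level of the finite products of operators from Proposition~\ref{prop:inter} and then pass to the limit. Write the law of the censored process at time $t$ as $\lim_{N\to\infty}\prod P_{-k,t-1-s}\,\delta_{\bm{1}_{x>0}}$, a product of operators each equal to either $P_k$ or $\mathrm{Id}$. Define $Q_k$ to be the same operator with $b_1,b_2$ exchanged. By the same proposition applied with swapped parameters, $\mathbb P^{b_2,b_1}(\eta_t\in\cdot)$ is the corresponding limit of products of the $Q_{-k,t-1-s}$ applied to $\delta_{()}$. Censoring is the same set $\mathcal C$ in both cases.

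The key claim is: for any finite sequence $k_1,\dots,k_n$, if $\prod_{i=1}^n Q_{k_i}\delta_{()}=\sum_\lambda c_\lambda\delta_\lambda$, then $\prod_{i=1}^nP_{k_i}\delta_{()}=\sum_\lambda c_\lambda v_\lambda$. I will prove this by induction on $n$. The base case is $v_{()}=\delta_{()}$.

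For the inductive step, apply $P_{k}$ to $\sum_\lambda c_\lambda v_\lambda$ and use Proposition~\ref{prop:vdynamics}:
\begin{itemize}
\item if $\lambda^k>\lambda$, then $v_\lambda\mapsto(1-b_2)v_\lambda+b_2v_{\lambda^k}$;
\item if $\lambda^k<\lambda$, then $v_\lambda\mapsto(1-b_1)v_\lambda+b_1v_{\lambda^k}$;
\item otherwise $v_\lambda$ is unchanged.
\end{itemize}
Now compare with $Q_k\delta_\lambda$ using \eqref{eq:PkonDirac} with the parameters swapped. The case $\lambda^k>\lambda$ corresponds to $(\eta(k),\eta(k+1))=(0,1)$, so $Q_k\delta_\lambda=(1-b_2)\delta_\lambda+b_2\delta_{\lambda^k}$. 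The case $\lambda^k<\lambda$ corresponds to $(1,0)$, so $Q_k\delta_\lambda=(1-b_1)\delta_\lambda+b_1\delta_{\lambda^k}$. The trivial case matches as well. Hence the linear map $\delta_\lambda\mapsto v_\lambda$ intertwines $Q_k$ with $P_k$, that is, $P_k\circ V=V\circ Q_k$, where $V$ maps $\sum c_\lambda\delta_\lambda$ to $\sum c_\lambda v_\lambda$. The induction then closes immediately, and identity operators commute trivially. Evaluating at $\mu$ gives $\bigl(\prod P_{k_i}\delta_{()}\bigr)(\mu)=\sum_\lambda c_\lambda v_\lambda(\mu)=\mathbb E\,v_{\xi}(\mu)$, where $\xi\sim\prod Q_{k_i}\delta_{()}$.

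The main technical point is the limit $N\to\infty$. I need the $\mu$-marginals on both sides to converge: the left side to $\mathbb P^{b_1,b_2}(\eta_t=\mu)$, and the right side to $\mathbb E^{b_2,b_1}v_{\eta_t}(\mu)$. The left side converges by Proposition~\ref{prop:inter}, since a single atom is a bounded local event.

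For the right side, the proof of Proposition~\ref{prop:inter} gives more than weak convergence: the intermediate configurations of the swapped process are almost surely eventually constant in $N$, so they are eventually equal to $\eta_t$. Since $v_\lambda(\mu)\le1$ is bounded, bounded convergence gives convergence of the expectations. Alternatively, only finitely many operators ever act nontrivially starting from $\delta_{()}$ within a finite window, but the a.s. eventual-constancy argument avoids any such bookkeeping.

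This yields $\mathbb P^{b_1,b_2}(\eta_t=\mu)=\mathbb E^{b_2,b_1}(v_{\eta_t}(\mu))$.
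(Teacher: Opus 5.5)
Your proposal is correct and follows the paper's proof. The paper also uses Proposition~\ref{prop:vdynamics} to see that $P_k$ acts on the $v_\lambda$ exactly as the parameter-swapped operator acts on the $\delta_\lambda$, then invokes Proposition~\ref{prop:inter}, leaving implicit the induction over finite products and the $N\to\infty$ limit that you spell out. One small point: $\{\eta=\mu\}$ is not a cylinder event, so for the left-hand side you should also use the almost-sure eventual agreement of the truncated and full processes, exactly as you do on the right, rather than appeal to locality.
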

\begin{proof}
    Notice that by Proposition~\ref{prop:vdynamics} the $P_k$ act on the measures $v_\lambda$ as it does on Dirac deltas $\delta_\lambda$, just with $b_1$ and $b_2$ exchanged.
    Therefore Proposition~\ref{prop:inter} gives that the law of $\eta_t$ is given by
    \[
    \sum_{\lambda\in\mathcal P}\mathbb P^{b_2,b_1}(\eta_t=\lambda)v_\lambda\,,
    \]
    which is equivalent to the statement written above.
\end{proof}



\begin{remark}
    Note that the same statement holds for ASEP, which can be seen by taking $b_1=\varepsilon L,b_2=\varepsilon R$ and $\varepsilon$ to $0$, or by noting that its law can be written as a random product of $P_k$'s.
    The switching of the parameters $b_1,b_2$ becomes the switching of left and right jump rates in that setting.
\end{remark}


\subsection{Half-line and segment}

The stochastic six-vertex model is also sometimes studied on the half line or on the strip, which are the domains $\{(x+t,t):x< 0\}$ and $\{(x+t,t):0\leq x \leq N\}$ for some $N>0$ respectively, see e.g. \cite{10.1215/00127094-2018-0019, 10.1214/24-EJP1100}.
Consider these processes with $k$ particles, starting all at the right edge of the domain.
Theorem~\ref{thm:censoring} applies to this situation, by shifting the domains such that the initial positions of the particles are given by $\{1,\dots,k\}$ and by censoring all vertices in $\{(k+1+t,t):t\geq0\}$ for the half-line and all vertices in $\{(k+1+t,t):t\geq0\}$ and $\{(-(N-k)-1+t,t):t\geq0\}$ for the strip.
However, we can obtain a stronger statement by considering which $P_k$ can appear in the product in Proposition~\ref{prop:inter} for these two domains.

\begin{definition}
    Configurations $\eta\colon\{x\in\mathbb Z:x\leq k\}\to\{0,1\}$ with exactly $k$ particles can be identified with partitions with at most $k$ rows, by extending them with value $1$ to the right of $k$ and applying the identification of particle configurations and partitions above.
    On these configurations define the measure
    \[
    \pi^k(\lambda)=v_{(\infty^k)}(\lambda)=q^{|\lambda|}\prod_{i=1}^k(1-q^i)\,,
    \]
    where $v_{(\infty^k)}$ is the limit $n\to\infty$ of $v_{(n,\dots,n)}$, i.e., the parabolic Kazhdan--Lusztig R-measure indexed by the partition with $k$ rows of length $n$. 
\end{definition}

\begin{proposition}[Half line]
    Consider the censored stochastic six-vertex process $(\eta_t)_{t\geq0}$ on the half line $\{(x+t,t):x\leq k\}$ with $k$ particles, started from the initial condition $\bm{1}_{0<x\leq k}$.
    Then, for any time $t$ the law of $\eta_t$ is dominated by $\pi^k$.
\end{proposition}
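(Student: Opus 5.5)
By Proposition~\ref{prop:inter}, the law of $\eta_t$ on the half line is a limit of products of operators $P_j$ applied to $\delta_{\bm{1}_{x>0}}=v_{()}$. The only restriction is which indices occur. Extending the half-line configuration by particles at all sites $x>k$, the vertices at spatial index $k$ (between sites $k$ and $k+1$) are effectively censored. Hence only operators $P_j$ with $j\leq k-1$ appear, so we can write
\[
\mathcal L(\eta_t)=\lim_{N\to\infty}\prod_{i=1}^{n}P_{j_i}\,v_{()}, \qquad j_i\le k-1 .
\]
By Proposition~\ref{prop:vdynamics}, each such product equals $\sum_\lambda c_\lambda v_\lambda$ with $c_\lambda\ge 0$ and $\sum_\lambda c_\lambda=1$. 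The partitions $\lambda$ are reached from $()$ by flipping corners on diagonals $j\le k-1$ only. Under the identification $\lambda_i=i-X_i$, row $i$ has its last box on diagonal $i-\lambda_i$. Adding a box in row $k+1$ would require a box on diagonal $k$ (namely cell $(k+1,1)$), which is never flipped. So every $\lambda$ with $c_\lambda>0$ has at most $k$ rows.

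It therefore suffices to show $v_\lambda\preceq\pi^k$ for every $\lambda$ with at most $k$ rows. For such $\lambda$ and $n\ge\lambda_1$, we have $\lambda\le(n^k)$, so Proposition~\ref{prop:monotonicity} gives $v_\lambda\preceq v_{(n^k)}$. It remains to compute $v_{(n^k)}$ and pass to the limit. For $\mu\le(n^k)$, the rim decomposition of the rectangle $(n^k)$ consists of $k$ hooks (for $n\ge k$). After removing $\mu$, each rim has at most one connected component. The $r$-th rim, at depth $r$, contains some box outside $\mu$ as soon as $n$ is large compared with $\lambda_1$ and $|\mu|$, since $\mu$ cannot fill the long arm of a hook of arm length about $n$. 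Thus for fixed $\mu$ with at most $k$ rows and $n\to\infty$,
\[
v_{(n^k)}(\mu)\to q^{|\mu|}\prod_{i=1}^k(1-q^i)=\pi^k(\mu),
\]
which is consistent with the definition of $\pi^k$ as this limit. Stochastic domination is then preserved under this pointwise limit. For any decreasing $E$, the values $v_{(n^k)}(E)$ are nonincreasing in $n$ by Proposition~\ref{prop:monotonicity}, and they converge to $\pi^k(E)$ by dominated convergence, using that the $v_{(n^k)}$ and $\pi^k$ are probability measures. Hence $v_\lambda(E)\ge\pi^k(E)$ for every decreasing $E$.

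Finally, averaging over $c_\lambda$ gives $\prod_i P_{j_i}v_{()}\preceq\pi^k$. Passing to the weak limit in $N$ preserves the inequality for each decreasing event, which yields the proposition. The step I expect to need the most care is the first one: verifying precisely, through the bookkeeping of Proposition~\ref{prop:inter} with the shift $x+t$, that the half-line boundary corresponds exactly to omitting $P_k$ and never any other index, and that extending by particles to the right of $k$ is compatible with the half-line dynamics. The remaining steps follow directly from Proposition~\ref{prop:vdynamics} and Proposition~\ref{prop:monotonicity}.
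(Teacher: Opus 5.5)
Your proposal is correct and follows the paper's proof: only $P_j$ with $j\le k-1$ act, Proposition~\ref{prop:vdynamics} writes the law as a convex combination of $v_\lambda$ with at most $k$ rows, and each such $v_\lambda$ is dominated via Proposition~\ref{prop:monotonicity} by $v_{(n^k)}\to\pi^k$. Your explicit check that $P_k$ itself never appears is exactly what the at-most-$k$-rows claim needs; the paper's proof only states that no $P_j$ with $j>k$ appears. Your hook computation of $\lim_n v_{(n^k)}$ also fills in a step the paper simply takes as the definition of $\pi^k$.
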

\begin{proof}
    Since all vertices on the boundary $\{(k+1+t,t):t\geq0\}$ are censored, $\eta_t(x)=1$ for all $x\geq k+1$. Therefore, it does not change the process if we censor all vertices to the right of the boundary as well, i.e., all vertices $\{(x,t):x\geq k+1+t\geq0\}$.
    Then in the product in Proposition~\ref{prop:inter}, no $P_j$ with $j>k$ will appear, which in turn will mean that the $c_\lambda$ appearing in the proof of Theorem~\ref{thm:product} will be $0$ for all partitions with more than $k$ rows and hence the law of $\eta_t$ is the limit of convex combinations of $v_\lambda$ with $\lambda$'s with at most $k$ rows.
    Each $v_\lambda$ with at most $k$ rows is dominated by $v_{(n,\dots,n)}$ for $n$ sufficiently large, therefore they are all dominated by $\pi^k$.
\end{proof}

\begin{proposition}[Strip]
        Consider the censored stochastic six-vertex process $(\eta_t)_{t\geq0}$ on the strip $\{(x+t,t):-(N-k)\leq x\leq k\}$ with $k$ particles, started from the initial condition $\bm{1}_{0<x\leq k}$.
    Then for any time $t$ the law of $\eta_t$ is dominated by $v_{(N-k)^k}$, where $(N-k)^k$ is the rectangular partition with $k$ rows of length $N-k$.
\end{proposition}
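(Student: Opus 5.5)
The plan follows the proof of the half-line proposition. The strip has two censored boundaries, $\{(k+1+t,t):t\geq0\}$ and $\{(-(N-k)-1+t,t):t\geq0\}$. Since all $k$ particles start in $\{1,\dots,k\}$ and no path can cross a censored boundary vertex, we have $\eta_t(x)=1$ for $x\geq k+1$ and $\eta_t(x)=0$ for $x\leq -(N-k)$ at all times, once we extend the configuration by $1$'s to the right and $0$'s to the left. It therefore does not change the process to also censor every vertex outside the strip. In the product of Proposition~\ref{prop:inter}, only operators $P_j$ with $-(N-k)\leq j\leq k-1$ then appear; the remaining ones act trivially. So the law of $\eta_t$ is $\prod_i P_{k_i}\delta_{\bm{1}_{x>0}}$ with every $k_i$ in this window, and here the product is finite because only finitely many $j$ occur at each time.

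Next, by Proposition~\ref{prop:vdynamics}, this measure equals $\sum_\lambda c_\lambda v_\lambda$ with $c_\lambda\geq 0$, $\sum_\lambda c_\lambda=1$. Every $\lambda$ with $c_\lambda>0$ is reached from $()$ by successive flips $\lambda\mapsto\lambda^{j}$ with $j$ in the window. I claim all such $\lambda$ lie in $\mathcal P(N,k)$, i.e.\ $\lambda\subseteq (N-k)^k$. A box can only be added on diagonal $j$ (cells with $i-j'=j$) when that diagonal has an addable corner. Starting from $()$ and using only diagonals with $-(N-k)\leq j\leq k-1$, I argue by induction that one never leaves the rectangle. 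In a partition $\lambda\subseteq(N-k)^k$, an addable cell outside the rectangle lies either in row $k+1$, column $1$ (diagonal $k$) or in row $1$, column $N-k+1$ (diagonal $-(N-k)$). The second of these has diagonal index exactly at the window's edge, so I need to recheck the indexing conventions: the left boundary censors $P_{-(N-k)-1}$, and the diagonal corresponding to the swap of $(\eta(j),\eta(j+1))$ must be matched carefully. Alternatively, I can argue on particle configurations: each flip preserves $\eta(x)=1$ for $x>k$ and $\eta(x)=0$ for $x\leq-(N-k)$, which bounds $\lambda_1\leq N-k$ and the number of rows by $k$.

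Finally, for $\lambda\subseteq(N-k)^k$, Proposition~\ref{prop:monotonicity} gives $v_\lambda\preceq v_{(N-k)^k}$. A convex combination of measures dominated by $v_{(N-k)^k}$ is dominated by it: for any decreasing set $E$, $\sum_\lambda c_\lambda v_\lambda(E)\geq v_{(N-k)^k}(E)$. The main obstacle is only the bookkeeping in the middle step, namely matching the strip boundaries to the allowed diagonals so that the reachable partitions fit exactly in the $k\times(N-k)$ box. The cleanest route is to phrase this step via the frozen particle values rather than diagonals.
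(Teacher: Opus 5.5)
Your argument is the same as the paper's. You freeze everything outside the strip, so that the law of $\eta_t$ is a finite product of $P_j$'s from a fixed window applied to $\delta_{\bm{1}_{x>0}}=v_{()}$. You expand it via Proposition~\ref{prop:vdynamics} as a convex combination of $v_\lambda$ over $\lambda$ reachable from $()$ by flips on those diagonals, and conclude with Proposition~\ref{prop:monotonicity}.

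The one step that is wrong as written is the window. You include $P_{-(N-k)}$, but that operator swaps sites $-(N-k)$ and $-(N-k)+1$. This contradicts your own (needed) assertion that $\eta_t(x)=0$ for $x\le-(N-k)$. As you noticed, it also acts on diagonal $-(N-k)$, whose first cell is $(1,N-k+1)$. This is not cosmetic. With that operator live (and, say, no further censoring), for large $t$ the law of $\eta_t$ charges partitions with $\lambda_1=N-k+1$. It then cannot be dominated by $v_{(N-k)^k}$, as the decreasing set $\mathcal P(N,k)$ shows.

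The window must be $-(N-k)+1\le j\le k-1$, i.e.\ $P_{-N+k+1},\dots,P_{k-1}$ as in the paper. Then the $k$ particles and $N-k$ holes live on the $N$ sites $\{-(N-k)+1,\dots,k\}$. Do not try to extract this from the coordinates of the censored boundary vertices. Read literally against the vertex/operator correspondence in Proposition~\ref{prop:inter}, those coordinates would leave both $P_{-(N-k)}$ and $P_k$ live, which is inconsistent with the stated conclusion. The paper's proof simply uses the window above, which is the one the conclusion forces.

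With the correct window, your own analysis closes the step. The only addable cells of a $\lambda\subseteq(N-k)^k$ lying outside the box are on diagonals $k$ and $-(N-k)$, and neither is used. Equivalently, $X_1\ge-(N-k)+1$ gives $\lambda_1\le N-k$, and $X_{k+1}=k+1$ gives $\lambda_{k+1}=0$. The rest of the proposal is fine.
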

\begin{proof}
    We proceed as above, noting that in this case the only $P_j$ appearing in the product are $P_{-N+k+1},\dots, P_{k-1}$.
    All diagrams that can be obtained by flipping boxes in these diagonals are contained in the rectangular diagram $(N-k)^k$.
\end{proof}

\begin{remark}
    Note that for the process on the half line, $\pi^k$ is the stationary measure, and therefore this is another case in which the second half of the censoring theorem from \cite{MR3106501} holds.
    However, for the strip the dominating measure is not the stationary measure.
\end{remark}

\subsection{Labeling processes}
One of our main applications of Theorem \ref{thm:censoring} is to a censoring scheme arising from a multi-class (or colored) stochastic six-vertex process where we censor vertices such that the resulting graph describes the behavior of second-class particles with respect to third-class particles, i.e., where all positions containing first-class particles and holes have been deleted.

\begin{figure}[t]
		\centering
		\begin{tabular}{|c|c|c|c|c|c|}
			\hline
			\begin{tikzpicture}[scale = 1.5]
			\draw[fill][white] (0.5, 0) circle (0.05);
			\draw[thick][white] (0, 0) -- (1,0);
			\draw[thick][white] (0.5, -0.5) -- 
			(0.5,0.5);
			\node at (0.5, 0) {Configuration};
			\end{tikzpicture}
			&
			\begin{tikzpicture}[scale = 1.2]
			\draw[thick, red] (0, 0) -- (1,0);
			\draw[thick, red] (0.5, -0.5) -- (0.5,0.5);
            \draw[fill] (0.5, 0) circle (0.05);
            \node at (-0.2, 0) {$i$};
            \node at (0.5, 0.7) {$i$};
            \node at (1.2, 0) {$i$};
            \node at (0.5, -0.7) {$i$};

			\end{tikzpicture}
			&
			\begin{tikzpicture}[scale = 1.2]
			\draw[thick, blue](0, 0) -- (1,0);
			\draw[thick, red] (0.5, -0.5) -- (0.5,0.5);
			\draw[fill] (0.5, 0) circle (0.05);
            \node at (-0.2, 0) {$j$};
            \node at (0.5, 0.7) {$i$};
            \node at (1.2, 0) {$j$};
            \node at (0.5, -0.7) {$i$};
			\end{tikzpicture}
			&
			\begin{tikzpicture}[scale = 1.2]
			\draw[thick][blue] (0, 0) -- (0.5,0);
			\draw[thick][blue] (0.5, 0) -- (0.5, 0.5);
			\draw[thick, red] (0.5, 0) -- (1, 0);
			\draw[thick, red] (0.5, -0.5) -- (0.5, 0);
			(0.5,0.5);
			\draw[fill] (0.5, 0) circle (0.05);
            \node at (-0.2, 0) {$j$};
            \node at (0.5, 0.7) {$j$};
            \node at (1.2, 0) {$i$};
            \node at (0.5, -0.7) {$i$};
			\end{tikzpicture}
			&
			\begin{tikzpicture}[scale = 1.2]
			\draw[thick, red] (0, 0) -- (1,0);
			\draw[thick][blue] (0.5, -0.5) -- (0.5,0.5);
			\draw[fill] (0.5, 0) circle (0.05);
            \node at (-0.2, 0) {$i$};
            \node at (0.5, 0.7) {$j$};
            \node at (1.2, 0) {$i$};
            \node at (0.5, -0.7) {$j$};
			\end{tikzpicture}
			&
			\begin{tikzpicture}[scale = 1.2]
			\draw[thick, red] (0, 0) -- (0.5,0);
			\draw[thick, red] (0.5, 0) -- (0.5, 0.5);
			\draw[thick, blue] (0.5, 0) -- (1, 0);
			\draw[thick, blue] (0.5, -0.5) -- (0.5, 0);
			(0.5,0.5);
			\draw[fill] (0.5, 0) circle (0.05);
            \node at (-0.2, 0) {$i$};
            \node at (0.5, 0.7) {$i$};
            \node at (1.2, 0) {$j$};
            \node at (0.5, -0.7) {$j$};
			\end{tikzpicture}
			\\
			\hline
			Weight 
			& 1  & $b_1$ & $1- b_1$ & $b_2$ & $1-b_2$\\
			\hline
		\end{tabular}
		\caption{The allowed configurations for the multi-class stochastic six-vertex model, where red lines represent class $i$ and blue lines represent class $j$ for $i < j$. }
		\label{fig:multiClasss6v}
	\end{figure}
    
Define the multi-class stochastic six-vertex process as in \cite[Section 2.3]{MR4093866}.
That is, instead of edges being occupied/unoccupied by paths, we assign every edge a label in $\Z\cup\{\pm\infty\}$, and consider configurations where at every vertex the outgoing edges must have the same labels as the incoming edges.
Vertices are then given weights according to the relative order of the labels as in Figure \ref{fig:multiClasss6v}.
The associated process $(\eta_t)_{t\geq0}$, where $\eta_t:\mathbb Z\to\mathbb Z\cup\{\pm\infty\}$, is called the multi-class stochastic six-vertex process.
To deal with initial conditions which are not constant sufficiently far to the left, one considers so-called cut-vertices, see \cite[Lemma 2.3]{MR4093866} or \cite[Proposition 2.2]{MR5002251}.
By considering initial conditions with only two values of labels, one recovers the single-class process.
We can now state the following theorem.
\begin{theorem}\label{thm:labeling}
    Let $(\eta_t)_{t\geq 0}$ be a multi-class stochastic six-vertex process on the line with parameters $0<b_1<b_2<1$ and with the following initial conditions:
    \begin{itemize}
        \item There are some first-class particles (finitely or infinitely many).
        \item There are infinitely many second-class particles, all to the right of $0$.
        \item There are infinitely many third-class particles, all at or to the left of $0$.
        \item All other positions are empty, i.e., have class $\infty$.
    \end{itemize}
    Let $\cdots < {Z}_t(-1) <  {Z}_t(0)<{Z}_t(1)<\cdots$ be the ordered positions of the second- and third-class particles at time $t$, where $Z_t(0)$ is chosen such that the number of third-class particles among $(Z_t(k))_{k>0}$ equals the number of second-class particles among $(Z_t(k))_{k\leq0}$.
    Let $\xi_t$ denote the configuration obtained by considering the labeling process of the second and third-class particles at time $t$ where a second-class particle receives label $1$ and a third-class particle receives label $0$. In other words, let 
    \begin{equation}
    \xi_t(k) = \begin{cases} 1 & \text{if ${Z}_t(k)$ contains a second-class particle} \\ 0 & \text{if ${Z}_t(k)$ contains a third-class particle, } \end{cases}
    \end{equation}
    for $k\in\mathbb Z$.
    Then for any $t$, the law of $\xi_t$ conditioned on both $\bm{Z} = (Z_s(k))_{ s \leq t,k \in \mathbb Z}$ and the space-time history of the first-class particles, is stochastically dominated by the blocking measure $\pi$. 
\end{theorem}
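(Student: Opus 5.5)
The plan is to reduce Theorem~\ref{thm:labeling} to Theorem~\ref{thm:product} (equivalently Theorem~\ref{thm:censoring}). We condition on $\bm Z$ and on the first-class history and show that the conditional evolution of $\xi$ is a product of operators $P_k$ applied to $\delta_{\bm{1}_{x>0}}$, possibly with identities inserted. The key structural fact about the multi-class model is the colour-merging property: projecting labels through any monotone map gives again a multi-class stochastic six-vertex process. Merging second- and third-class particles into one class shows that first-class particles together with the merged class evolve autonomously. So $\bm Z$ and the first-class history are measurable with respect to a process that does not see the $2$ versus $3$ distinction.

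First I would fix the sequential-update picture used in the proof of Proposition~\ref{prop:inter}: vertices are updated line by line, left to right, so each vertex update acts on the pair of incoming edge labels. Using the weights in Figure~\ref{fig:multiClasss6v}, I would classify each vertex by what the merged process sees there.
\begin{itemize}
\item If at most one incoming edge carries a particle of class $2$ or $3$, the $2/3$ labeling is transported deterministically along the $Z$-ordering. The relative order of second/third-class particles is unchanged.
\item If both incoming edges carry particles of class in $\{2,3\}$, the merged process records only that two merged particles entered. This happens at a vertex corresponding to some pair of consecutive indices $(Z(k),Z(k+1))$ in the running order.
\end{itemize}
In the second case, given the merged data, the outgoing labels are a swap with probability $b_1$ when the configuration is $(3,2)$ in the $(0,1)$ sense and $b_2$ when it is $(2,3)$. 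This is exactly the kernel $P_k$ of Definition~\ref{def:Pk} in the relabelled coordinate $k$. In the merged picture such a vertex looks like two identical particles, and the sub-choice between the III/V-type and IV/VI-type outcomes does not affect the merged history. Hence conditionally on the merged history, the update is independent of everything else and has the law $P_k$.

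I must check the orientation conventions carefully: which incoming edge corresponds to $\xi(k)$ versus $\xi(k+1)$, matching the shifted coordinates of Proposition~\ref{prop:inter}. The goal is to confirm that the rates $b_1$ and $b_2$ land as in Definition~\ref{def:Pk} and are not swapped. Swapped rates would give $q>1$ and the claim would fail.

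Next comes the indexing. The choice of $Z_t(0)$ balancing third-class particles on the right against second-class particles on the left makes $\xi_t\in A$, and at $t=0$ it gives $\xi_0=\bm{1}_{x>0}$. Each conditional update acts on two consecutive indices, so $\xi_t$ is $\prod_i P_{k_i}\delta_{\bm{1}_{x>0}}$ for a sequence $k_i$ determined by the conditioning. I also need the index of $Z(0)$ to stay consistent across swaps. Swapping adjacent $2,3$ particles preserves the balance condition, and so does a first-class particle or hole passing a merged particle, because that changes neither the order nor the count. One further subtlety is the infinite product, and the cut-vertex construction needed when the first-class configuration is not eventually constant. I would handle this by truncation: for finite windows only finitely many $P_k$ act, and then I pass to the weak limit as in the proof of Theorem~\ref{thm:censoring}, using that domination by $\pi$ is preserved under weak limits.

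The main obstacle is the conditional-independence step. I must rigorously show that, given the full merged history, the $2/3$ outcome at each doubly-occupied vertex is an independent Bernoulli with the stated parameter. This requires the merged weights to factor: the weight of seeing ``two merged particles in, two out'' must equal $1$ in the merged model, and the split into swap versus no swap must be $b_1$ versus $1-b_1$ or $b_2$ versus $1-b_2$ according to the order. I would first try to verify this directly from the table in Figure~\ref{fig:multiClasss6v}, then invoke the Markov property of the sequential construction to get the product form. Theorem~\ref{thm:product} then finishes the proof.
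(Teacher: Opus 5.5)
Your proposal is correct and follows essentially the paper's route. You condition on the merged (first-class plus $\{2,3\}$) history, observe that the $2/3$ labels then evolve by independent applications of the kernels $P_k$ at the touching vertices (with the correct $b_1$/$b_2$ orientation), and conclude from domination of products of $P_k$'s by $\pi$. The paper phrases this last step as replicating the touching sequence with a censoring set and invoking Theorem~\ref{thm:censoring}, which is equivalent to your direct appeal to Theorem~\ref{thm:product} plus a weak limit. Your explicit check of the conditional-independence and orientation steps fills in what the paper delegates to the proof of Theorem 1.6 in \cite{MR5002251}.
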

Note that the choice of $Z_s(0)$ among the second- and third-class particles is the same for all possible realizations of $\xi$, given the paths of the second- and third-class particles.

\begin{remark}
    An analogous result is already known in the setting of ASEP where the proof follows from the monotonicity of the graphical construction of ASEP. This was first stated in \cite[Lemma 4.3]{MR2135733} and proven in detail in \cite{MR2630064}.
\end{remark}

\begin{proof}[Proof of Theorem~\ref{thm:labeling}]
First note that the law of $\xi_0$ is given by $\delta_{\bm{1}_{x>0}}$. Once we condition on $\bm Z$, the labeling process can be understood as the stochastic six-vertex process on the subgraph of $\mathbb Z^2$ given by the edges of the second- and third-class particles.
Indeed, whether particles turn or go straight at a vertex of this subgraph at which a second-class and a third-class particle meet is independent of the rest of $\bm Z$, since second and third-class particles both interact the same way with the first-class particles and the holes. For a more formal description of this independence, see the proof of Theorem 1.6 in \cite{MR5002251}.

The graph can be described as a sequence of directed paths, which touch at the vertices of degree four.
Whenever the paths touch the path on the left will be the horizontal incoming edge and the path on the right will be the vertical incoming edge of the vertex at which they touch.
What matters for the law of the labeling process is only the sequence of degree four vertices where the paths touch.
Any such sequence can be replicated by censoring sufficiently many vertices.
\end{proof}

We can also consider the same setting as Theorem~\ref{thm:labeling}, except with only finitely many second-class particles, or only finitely many third-class particles, which will correspond to the process on the half line in the previous subsection.

\begin{corollary}\label{cor:finitesecondclass}
    Consider the same setup as in Theorem~\ref{thm:labeling}, except there are only $N$ second-class particles.
    Let $\dots<Z_t(N-1)<Z_t(N)$ be the ordered positions of the second- and third-class particles and consider the labeling process
    \begin{equation}
    \xi_t(k) = 
    \begin{cases} 
    1 & \text{if $k\leq N$ and ${Z}_t(k)$ contains a second-class particle} \\ 
    0 & \text{if $k\leq N$ and ${Z}_t(k)$ contains a third-class particle, } \\
    1 &\text{if $k> N$}.\end{cases}
    \end{equation}
    Then for any time $t$, the law of $\xi_t$ is dominated by $\pi^N$.
\end{corollary}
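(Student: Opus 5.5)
The plan is to combine the reduction in the proof of Theorem~\ref{thm:labeling} with the half-line argument of the previous subsection. Condition on $\bm Z$ and on the space-time history of the first-class particles. As in the proof of Theorem~\ref{thm:labeling}, the labeling process $\xi_t$ is then a censored stochastic six-vertex process on the subgraph traced out by the second- and third-class paths. Its initial condition is $\xi_0=\bm{1}_{0<x\leq N}$, extended by $1$ to the right of $N$. I would reindex the paths so that the $N$ second-class particles start at labels $1,\dots,N$. The only interactions that can exchange labels happen at degree-four vertices where two neighbouring paths (labels $j$ and $j+1$) touch. Such a vertex acts on the labeling exactly by an operator $P_j$ with parameters $b_1,b_2$: the left path is the horizontal incoming edge and the right path the vertical one, matching Definition~\ref{def:Pk}.

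Consequently, the conditional law of $\xi_t$ has the form $\prod_i P_{k_i}\delta_{\bm{1}_{x>0}}$, or a weak limit of such products, as in Proposition~\ref{prop:inter}. The key observation is that no $P_j$ with $j\geq N$ occurs. Positions $k>N$ carry fixed value $1$ by definition, so a swap between labels $N$ and $N+1$ is simply not part of the dynamics. Formally, one can take $P_N$ to be censored: the label-$N$ path is the rightmost second/third-class path, and there is no path at label $N+1$ for it to exchange with. This is exactly the half-line situation of the Half line proposition, with $k=N$.

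From here I would follow that proposition's proof. By Proposition~\ref{prop:vdynamics}, starting from $v_{()}$ and applying only $P_j$ with $j\le N-1$, one stays in convex combinations of $v_\lambda$ with $\lambda^j$ changes only in diagonals $j\le N-1$. Hence every $\lambda$ that appears has at most $N$ rows. Each such $v_\lambda$ satisfies $v_\lambda\preceq v_{(n^N)}$ for $n$ large by Proposition~\ref{prop:monotonicity}, and $v_{(n^N)}\to\pi^N$. Domination is preserved under convex combinations and weak limits, so the conditional law of $\xi_t$ is dominated by $\pi^N$. Averaging over the conditioning then gives the unconditional statement.

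The main obstacle is the bookkeeping in the first step. One has to make precise that the indexing $Z_t(k)$, anchored at the rightmost second/third-class particle at label $N$, is consistent over time. One also has to check that the touching vertices translate into operators $P_j$ with $j\le N-1$ only, and that the left-infinite family of third-class paths is handled by the same limiting argument as in Proposition~\ref{prop:inter}. I would settle the first point by noting that the rightmost path always stays rightmost, since paths only touch and never cross in the subgraph. For the second, the cut-vertex argument should give that only finitely many operators are relevant for each fixed window.
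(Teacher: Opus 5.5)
Your proposal is correct and follows the argument the paper intends for this corollary. You condition on $\bm Z$ and the first-class history to turn $\xi_t$ into a product of operators $P_j$, as in the proof of Theorem~\ref{thm:labeling}, and then run the half-line argument: only $P_j$ with $j\le N-1$ occur, so only $v_\lambda$ with at most $N$ rows appear, each dominated by $v_{(n^N)}\to\pi^N$. Your exclusion of $P_N$ itself, not merely of $P_j$ with $j>N$, is exactly the bookkeeping needed to keep every $\lambda$ within $N$ rows.
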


\begin{remark}\label{rem:kone}
   Specializing this corollary to $N=1$ yields Theorem 1.6 in \cite{MR5002251}, which states that the number of third-class particles that overtake a single second-class particle is stochastically dominated by a geometric random variable with parameter $q$.
\end{remark}

\section{Hecke algebra point of view}\label{sec:Hecke}
While working on this project, we discovered that the measures $v_{\lambda}$ are actually a special case of the \emph{parabolic Kazhdan--Lusztig $R$-polynomials} defined by Deodhar \cite{MR916182} and further studied by Brenti in \cite{MR1972246}. This connection arises when viewing the stochastic six-vertex model as a \emph{random walk on a Hecke algebra}. We review the relevant definitions and discuss this connection in more detail.

\subsection{Hecke algebra setup}
We first review the connection between the colored stochastic six-vertex model and random walks on Hecke algebras, following the notation in \cite{bufetov2020interacting}. We will restrict ourselves to the colored stochastic six-vertex model defined on an $i \times (n-i)$ rectangle. We view the colored stochastic six-vertex model as a permutation-valued particle process, i.e., $(\eta_t)_{t\geq0}$ such that $\eta_t \in S_n$ is a permutation of the $n$ particles with colors $1, 2, \ldots, n.$

Note that $S_n$ is a Coxeter group generated by the set $S$ of adjacent transpositions of the form $(k, k+1)$ for some $1 \leq k \leq n-1$. Each permutation $\sigma \in  S_n$ can be assigned a length $\ell(\sigma):= \text{inv}(\sigma)$ which is given by the number of inversions in the permutation $\sigma$. Our convention is to define the product of two permutations as $\tau \sigma =  \sigma \circ \tau.$

We will make use of the following partial order called the \emph{Bruhat order} on $S_n$.

\begin{definition}[Bruhat ordering on permutations]
    For permutations $u, v \in S_n$, we say that $u \to v$ if there exists a transposition $w$ such that $v = w u$ and $\ell(v) > \ell(u)$. We say that $u \leq v$ if there is a finite sequence of permutations $u_i \in S_n$ such that $u = u_1 \to u_2 \to \ldots \to u_m= v$. 
\end{definition}

Let $0 < q < 1$. We define the \emph{Hecke algebra} $\mathcal H(S_n)$ to be the algebra with linear basis $\{T_{\sigma}\}_{\sigma \in S_n}$ where we can define $T_{\sigma}$ using the following multiplication rule for all $\sigma \in S_n, \tau \in S$. 

\begin{equation}\label{eq:HeckeRel}
    T_\tau T_\sigma=\begin{cases}
        T_{\tau\sigma},&\text{ if }\ell(\tau\sigma)=\ell(\sigma)+1\\
        (1-q^{-1})T_\sigma+q^{-1}
        T_{\tau\sigma},&\text{ if }\ell (\tau\sigma)=\ell(\sigma)-1.
    \end{cases}
\end{equation}
An equivalent way to write this is that the $T_{\sigma}$ satisfy
\begin{align} \label{eq:multRule}
    T_{\sigma} T_{\sigma'}&=  T_{\sigma \sigma'} &&\text{ if } \ell(\sigma \sigma') = \ell(\sigma) + \ell(\sigma')  \notag \\
    (T_{\tau} - 1)(T_{\tau} + q^{-1}) &= 0 &&\text{ if } \tau \in S.
\end{align}

\begin{remark} \label{rmk:tildeT}
   We note here that the choice of the multiplication rule in \eqref{eq:multRule} is somewhat non-standard, and is made in order to better match the probabilistic interpretation of the model. However, if we define 
   $\widetilde{T}_{\sigma}:= q^{\ell(\sigma)} T_{\sigma}$ then we can see that the $\widetilde{T}_{\sigma}$ instead satisfy
   \begin{align} 
    (\widetilde{T}_{\tau} - q)(\widetilde{T}_{\tau} + 1) =0.
\end{align}
This matches the more standard convention found e.g. in \cite{MR2133266}\footnote{Note that in \cite{bufetov2020interacting} a third convention is used: $ (T_{\tau} - 1)(T_{\tau} + q) = 0 $. This matches our convention up to swapping $q$ and $q^{-1}$.}.
\end{remark}

Next, we define the set 
$$\mathcal H_{\text{prob}}(S_n) = \{h \in \mathcal H(S_n): h = \sum_{\sigma \in S_n} \kappa_{\sigma}T_{\sigma}, \sum_{\sigma} \kappa_{\sigma} = 1, \kappa_{\sigma} \geq 0\}.$$
Each of the elements $h \in \mathcal H_{\text{prob}}$ can be thought of as defining a probability distribution on $S_n$. 

Let $e \in S_n$ denote the identity permutation. Then for $\tau = (k, k+1) \in S$, we can define the element $Y_{\tau} \in \mathcal H_{\text{prob}}(S_n)  $ as follows: $$Y_{\tau}:= b_1 T_{\tau} + (1-b_1)T_e.$$ Note that 
\begin{equation}\label{eq:YT}
Y_\tau T_\sigma=\begin{cases}
    b_1T_{\tau\sigma}+(1-b_1)T_\sigma\text{, if }\sigma(k)<\sigma(k+1)\\
    b_2T_{\tau\sigma}+(1-b_2)T_\sigma\text{, if }\sigma(k)>\sigma(k+1)\,,
\end{cases}
\end{equation}
which we can check using \eqref{eq:HeckeRel}. It follows that $\mathcal{H}_{\text{prob}}$ is closed under left multiplication by $Y_{\tau}$.

Finally, to define the stochastic six-vertex model at time $1$, we multiply together these elements in a deterministic fashion as follows. Define
$$W_{i,n}: = Y_{(n-1,n)}\cdots Y_{(i+1,i+2)}Y_{(i,i+1)}.$$
Then the colored stochastic six-vertex model in the rectangle $i \times (n-i)$ is given by the element 
$$W_{1, n-i+1}\cdots W_{i-1,n-1}W_{i,n}.$$ 
We call this viewpoint of defining the stochastic six-vertex model through a product of elements in $\mathcal{H}_{\text{prob}}(S_n)$ a \emph{random walk on the Hecke algebra $\mathcal H(S_n)$.}

We now explain how to go from the colored stochastic six-vertex model defined above back to the uncolored stochastic six-vertex model via projection. Given $\sigma\in S_n$, we can obtain a particle-hole configuration by taking $(\mathbf{1}_{\sigma(j)> i})_{j\in \{1, \ldots,  n\}}$. In other words, we map all particles with color less than or equal $i$ to holes and all particles with colors greater than $i$ to particles. We can then define the extended configuration
$$ \eta(x) = \begin{cases}
    0,&\text{if }x\leq -i\\
    \mathbf{1}_{\sigma(x+i)> i}, &\text{if } -i+1\leq x\leq n-i \\
    1 & \text{if } x\geq n-i+1\,.
\end{cases}$$
The particle configuration $\eta(x)$ belongs to the set $A$ defined in \eqref{eq:ADef} and is therefore associated to a partition $\lambda \in \mathcal P(n,n-i)$. We define the map $\phi:S_n\to\mathcal P(n, n-i)$ by setting $\phi(\sigma)=\lambda$. 

By abuse of notation, we also set $\phi(T_\sigma)=\delta_{\phi(\sigma)}$ and extend linearly to a function $\phi:\mathcal{H}_{\text{prob}}(S_n) \to \mathcal{M}_1(\mathcal P(n, n-i))$ so that we can also project probability measures on permutations to probability measures on particle configurations/partitions. Note that if $\sigma \leq \sigma'$ under the Bruhat ordering, then $\phi(\sigma) \leq \phi(\sigma')$ under the containment ordering in \eqref{eq:ordering}.

In the language of Coxeter groups, $\phi$ is the projection of $S_n$ onto a parabolic quotient. Let $\tau_i = (i, i+1)$ and let $J = S \setminus \{\tau_i\}.$ We can then define the \emph{parabolic subgroup} $\left(S_n\right)_J$ as the subgroup of $S_n$ generated by $J$. We can see that 
$\left(S_n\right)_J=\operatorname{Stab}(\{1, \ldots i\}).$  We also define the \emph{parabolic quotient}
\begin{align*}
 \quad\left(S_n\right)^J:&= \left\{\sigma \in S_n:  \ell(\sigma s) >  \ell(\sigma) \text{ for all } s \in J \right\} \\
 &=\left\{\sigma \in S_n: \sigma^{-1}(1)<\cdots<\sigma^{-1}(i) \text { and } \sigma^{-1}(i+1)<\cdots<\sigma^{-1}(n)\right\}.   
\end{align*}
We call this a quotient because the minimal-length representatives of the left cosets of $(S_n)_J$ are given by $(S_n)^J$. In particular the representative of the coset $\sigma \cdot (S_n)_J$ is obtained by placing the colors $1, \ldots, i$ in the positions $\sigma^{-1}(1), \ldots, \sigma^{-1}(i)$ and the colors $i+1, \ldots n$ in positions $\sigma^{-1}(i+1), \ldots, \sigma^{-1}(n)$. Note that $\sigma \in \left(S_n\right)^J$ is completely determined by specifying $\sigma^{-1}(1), \ldots, \sigma^{-1}(i)$ and therefore can be identified with the particle configuration $(\mathbf{1}_{\sigma(j)> i})_{j\in \{1, \ldots,  n\}}$. It follows that we can identify $\left(S_n\right)^J$ with $\mathcal P(n, n-i)$.

We will now define a second basis for the Hecke algebra that projects down to the basis $v_{\lambda}$ defined in Definition \ref{def:vn}. This new basis consists of the elements $\overline{T}_{\sigma}:=T_{\sigma^{-1}}^{-1}$, which we first need to check are well-defined.

For $\tau \in S$, we can check directly that
\begin{align}
T_{\tau}^{-1} = qT_\tau +(1-q)T_e.
\end{align}
It then follows from the fact that any $T_{\sigma}$ can be written as a product of factors $T_{s_j}$ where $s_j \in S$ that we can also invert $T_{\sigma}$ for all $\sigma \in S_n$.

The following proposition demonstrates that  $Y_{\tau}\overline{T}_{\sigma}$ can be written as a convex combination of $\overline{T}_{\tau\sigma}$ and $\overline{T}_{\sigma}$. This property is the analogue of Proposition \ref{prop:vdynamics} and will show that $\phi(\overline{T}_{\sigma}) = v_{\phi(\sigma)}$.
\begin{proposition}\label{prop:YTbar}
Let $\tau=(k, k+1)$. Then the elements $\overline{T}_{\sigma}:=T_{\sigma^{-1}}^{-1}$ satisfy  
\begin{equation}
Y_\tau \overline{T}_\sigma=\begin{cases}\label{eq:YTbar}
    (1-b_2)\overline{T}_\sigma+b_2\overline{T}_{\tau\sigma}\text{, if }\sigma(k)<\sigma(k+1)\\
    (1-b_1)\overline{T}_\sigma+b_1\overline{T}_{\tau\sigma}\text{, if }\sigma(k)>\sigma(k+1)\,.
\end{cases}
\end{equation}
\end{proposition}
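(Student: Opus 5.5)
The plan is to work entirely inside $\mathcal H(S_n)$: rewrite $Y_\tau$ in terms of $T_\tau^{-1}$, and show that left multiplication by $T_\tau^{-1}$ acts on the $\overline T_\sigma$ basis exactly as $T_\tau$ acts on the $T_\sigma$ basis.

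\emph{Step 1 (the ascent case).} I will use the convention-independent identity $(\tau\sigma)^{-1}=\sigma^{-1}\tau^{-1}=\sigma^{-1}\tau$ and the fact that $\ell(\rho^{-1})=\ell(\rho)$. As in \eqref{eq:YT}, the condition $\sigma(k)<\sigma(k+1)$ corresponds to $\ell(\tau\sigma)=\ell(\sigma)+1$. In that case
\[
\ell(\sigma^{-1}\tau)=\ell(\tau\sigma)=\ell(\sigma^{-1})+1,
\]
so the first rule in \eqref{eq:multRule} gives $T_{\sigma^{-1}\tau}=T_{\sigma^{-1}}T_\tau$. Inverting,
\[
\overline T_{\tau\sigma}=T_\tau^{-1}T_{\sigma^{-1}}^{-1}=T_\tau^{-1}\overline T_\sigma .
\]
I expect this step, correctly matching the product convention $\tau\sigma=\sigma\circ\tau$ with the ascent condition, to be the only real point of care. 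Everything else is algebra.

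\emph{Step 2 (rewriting $Y_\tau$).} From $T_\tau^{-1}=qT_\tau+(1-q)T_e$ we get $T_\tau=q^{-1}\bigl(T_\tau^{-1}-(1-q)T_e\bigr)$. Substituting and using $b_1q^{-1}=b_2$,
\[
Y_\tau=b_2T_\tau^{-1}+(1-b_1-b_2+b_1)T_e=b_2T_\tau^{-1}+(1-b_2)T_e .
\]
Combining this with Step 1 yields
\[
Y_\tau\overline T_\sigma=(1-b_2)\overline T_\sigma+b_2\overline T_{\tau\sigma}
\]
when $\sigma(k)<\sigma(k+1)$, which is the first case.

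\emph{Step 3 (the descent case).} Suppose $\sigma(k)>\sigma(k+1)$ and set $\sigma'=\tau\sigma$. Then $\sigma'$ is an ascent for $\tau$ and $\sigma=\tau\sigma'$, so Step 1 gives $\overline T_\sigma=X\overline T_{\sigma'}$ with $X:=T_\tau^{-1}$. Inverting the quadratic relation $(T_\tau-1)(T_\tau+q^{-1})=0$ shows that $X$ satisfies $(X-1)(X+q)=0$, that is, $X^2=(1-q)X+q$. (This can also be checked directly from $X=qT_\tau+(1-q)T_e$.) Hence
\[
Y_\tau\overline T_\sigma=(b_2X^2+(1-b_2)X)\overline T_{\sigma'}=\bigl((1-b_1)X+b_1\bigr)\overline T_{\sigma'},
\]
using $b_2q=b_1$. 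Since $X\overline T_{\sigma'}=\overline T_\sigma$ and $\overline T_{\sigma'}=\overline T_{\tau\sigma}$, this equals
\[
(1-b_1)\overline T_\sigma+b_1\overline T_{\tau\sigma},
\]
which is the second case.
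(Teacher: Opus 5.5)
Your proof is correct and is essentially the paper's argument: in the ascent case you derive $\overline T_{\tau\sigma}=T_\tau^{-1}\overline T_\sigma$ and then combine $T_\tau^{-1}=qT_\tau+(1-q)T_e$ with $b_1=qb_2$ (rewriting $Y_\tau$ rather than solving for $T_\tau\overline T_\sigma$ is the same computation), and your justification of that first identity via $(\tau\sigma)^{-1}=\sigma^{-1}\tau$ fills in a step the paper only asserts. For the descent case, which the paper leaves as ``similar,'' your quadratic-relation computation is valid, though it is quicker to multiply $\overline T_\sigma=T_\tau^{-1}\overline T_{\tau\sigma}$ by $T_\tau$ to get $T_\tau\overline T_\sigma=\overline T_{\tau\sigma}$ and substitute this directly into $Y_\tau=b_1T_\tau+(1-b_1)T_e$.
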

\begin{proof}
If $\sigma(k)<\sigma(k+1)$ then $\ell(\tau\sigma)=\ell(\sigma)+1$.
Then
    \[\overline{T}_{\tau\sigma}=T_\tau^{-1}\overline{T}_\sigma=qT_\tau \overline{T}_\sigma+(1-q)\overline{T}_\sigma.\]
    Rearranging terms gives
    \[
    T_\tau \overline{T}_\sigma=q^{-1} \overline{T}_{\tau\sigma}+(1 - q^{-1})\overline{T}_\sigma
    \]
    and finally
    \[
    Y_\tau \overline{T}_\sigma=(b_1 T_{\tau} + (1-b_1)T_e)\overline{T}_\sigma=b_2\overline{T}_{\tau\sigma}+(1-b_2)\overline{T}_\sigma.
    \]
    The other case proceeds similarly.
\end{proof}

\begin{proposition}\label{prop:Tproj}
    For all $\sigma\in S_n$
    \[
    \phi(\overline{T}_\sigma)=v_{\phi(\sigma)}\,.
    \]
\end{proposition}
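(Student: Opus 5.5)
We prove Proposition~\ref{prop:Tproj} by induction on $\ell(\sigma)$, comparing the action of $Y_\tau$ on $\overline{T}_\sigma$ (Proposition~\ref{prop:YTbar}) with the action of $P_k$ on $v_\lambda$ (Proposition~\ref{prop:vdynamics}). The base case is $\sigma=e$. Here $\overline{T}_e=T_e$, and $\phi(e)$ is the configuration $\bm{1}_{x>0}$, i.e.\ the empty partition $()$. Thus $\phi(\overline{T}_e)=\delta_{()}=v_{()}$.

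The first ingredient is a compatibility statement: $\phi$ intertwines left multiplication by $Y_\tau$, with $\tau=(k,k+1)$, and the operator $P_{k-i}$. That is,
\[
\phi(Y_\tau h)=P_{k-i}\,\phi(h)\qquad\text{for all } h\in\mathcal H(S_n),
\]
extending $\phi$ linearly. By linearity it suffices to check this on $h=T_\sigma$, using \eqref{eq:YT}.
\begin{itemize}
\item If $\sigma(k)<\sigma(k+1)$, the colors at positions $k,k+1$ are in increasing order. After projection the pair $(\eta(k-i),\eta(k-i+1))$ is then either $(0,1)$, with swap probability $b_1$, or equal, in which case the swap is invisible after projection.
\item If $\sigma(k)>\sigma(k+1)$, the projected pair is $(1,0)$ or equal, and the swap probability is $b_2$.
\end{itemize}
This matches \eqref{eq:PkonDirac}. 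Left multiplication by $\tau$ swaps positions $k,k+1$, and the position shift $x=j-i$ sends $k$ to $k-i$. The one thing to verify carefully is the convention $\tau\sigma=\sigma\circ\tau$: one must confirm that it acts on positions, so that $\phi(\tau\sigma)=\phi(\sigma)^{k-i}$ whenever the projected pair differs.

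For the inductive step, take $\sigma$ with $\ell(\sigma)>0$. Choose $\tau=(k,k+1)$ with $\sigma(k)>\sigma(k+1)$, and set $\sigma'=\tau\sigma$, so that $\ell(\sigma')=\ell(\sigma)-1$ and $\sigma'(k)<\sigma'(k+1)$. Proposition~\ref{prop:YTbar} gives
\[
Y_\tau\overline{T}_{\sigma'}=(1-b_2)\overline{T}_{\sigma'}+b_2\overline{T}_\sigma .
\]
Applying $\phi$, the intertwining relation and the induction hypothesis give
\[
P_{k-i}v_{\lambda'}=(1-b_2)v_{\lambda'}+b_2\,\phi(\overline{T}_\sigma),\qquad \lambda'=\phi(\sigma').
\]
We compare this with Proposition~\ref{prop:vdynamics}, which splits into two cases according to the projected pair.
\begin{itemize}
\item If the colors at positions $k,k+1$ of $\sigma'$ lie on opposite sides of $i$, then $\lambda'^{\,k-i}>\lambda'$ and $\phi(\sigma)=\lambda'^{\,k-i}$. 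Proposition~\ref{prop:vdynamics} gives $P_{k-i}v_{\lambda'}=(1-b_2)v_{\lambda'}+b_2v_{\lambda'^{k-i}}$. Since $b_2>0$, we can solve to get $\phi(\overline{T}_\sigma)=v_{\phi(\sigma)}$.
\item If both colors lie on the same side of $i$, then $\phi(\sigma)=\phi(\sigma')$. Proposition~\ref{prop:vdynamics} gives $P_{k-i}v_{\lambda'}=v_{\lambda'}$, and hence again $\phi(\overline{T}_\sigma)=v_{\lambda'}=v_{\phi(\sigma)}$.
\end{itemize}

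The main obstacle is the bookkeeping of indices and orientations. Three things must agree: the convention $\tau\sigma=\sigma\circ\tau$; the shift $k\mapsto k-i$ between positions in $\{1,\dots,n\}$ and diagonals of partitions in $\mathcal P(n,n-i)$; and the fact that an ascent $\sigma'(k)<\sigma'(k+1)$ projects to adding a box rather than removing one. Once these are fixed consistently (for instance by checking the case $\sigma=\tau$, where $\phi(\overline T_\tau)$ should equal $v_{(1)}$ exactly when $\tau=\tau_i$), the argument is a direct induction.
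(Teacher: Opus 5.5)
Your proof is correct and is essentially the paper's argument: establish $\phi(Y_{\tau}h)=P\,\phi(h)$ on the basis $T_\sigma$ by comparing \eqref{eq:YT} with \eqref{eq:PkonDirac}, then induct on $\ell(\sigma)$, applying $\phi$ to Proposition~\ref{prop:YTbar} and solving for the new term (using $b_2>0$) by comparison with Proposition~\ref{prop:vdynamics}. The paper leaves several details implicit that you make explicit: the index shift $P_{k-i}$ (the paper writes $P_k$), the linear extension of $\phi$ to all of $\mathcal H(S_n)$, and the case where the projected pair is equal (the paper absorbs it into $\phi(\tau_k\sigma)=\phi(\sigma)^k$ with $\lambda^k=\lambda$).
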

\begin{proof}
The key observation is that for any $h\in\mathcal H_\text{prob}(S_n)$ and any transposition $\tau_k=(k,k+1)$ we have that
\[
\phi (Y_{\tau_k}h)=P_k\phi(h)\,.
\]
Considering first $h=T_\sigma$ for some $\sigma\in S_n$, this follows from comparing \eqref{eq:YT} and \eqref{eq:PkonDirac} using the definition of $\phi$.
The statement for general $h$ follows by linearity of $\phi$ on $\mathcal H_\text{prob}$.

Applying $\phi$ to both sides of \eqref{eq:YTbar} we obtain that for all $\sigma\in S_n$ and all $k\in\{1,\ldots,n-1\}$ 
\[
P_k\phi(\overline{T}_\sigma)=\begin{cases}
            (1-b_2)\phi(\overline{T}_\sigma)+b_2\phi(\overline{T}_{\tau_k\sigma}),&\text{ if $\sigma(k)<\sigma(k+1)$}\\
            (1-b_1)\phi(\overline{T}_\sigma)+b_1\phi(\overline{T}_{\tau_k\sigma}),&\text{ if $\sigma(k)>\sigma(k+1)$}\,.
        \end{cases}
\]
Using this we can iteratively prove that $\phi(\overline{T}_\sigma)=v_{\phi(\sigma)}$.
First note that $\phi(\overline{T}_e)=\delta_{()}=v_{()}$.
Now assume that the statement has already been proven for some $\sigma\in S_n$ and we want to show it for $\tau_k\sigma$ such that $\ell(\tau_k\sigma)=\ell(\sigma)+1$, i.e., $\sigma(k)<\sigma(k+1)$.
Then we have that
\[
\phi(\overline{T}_{\tau_k\sigma})=\frac{1}{b_2}P_k\phi(\overline{T}_\sigma)+\frac{(b_2-1)}{b_2}\phi(\overline{T}_\sigma)=\frac{1}{b_2}P_kv_{\phi(\sigma)}+\frac{(b_2-1)}{b_2}v_{\phi(\sigma)}.
\]
Using Proposition~\ref{prop:vdynamics} one can see that the right-hand side of this is equal to $v_{\phi(\tau_k\sigma)}$ since $\phi(\tau_k\sigma)=\phi(\sigma)^k$.
\end{proof}

It follows from Proposition~\ref{prop:Tproj} that $v_{\phi(\sigma)}(\mu)$ is the sum of the change-of-basis coefficients of $T_{\sigma'}$ in $\overline{T}_\sigma$ where the sum runs over $\sigma'$ such that $\phi(\sigma')=\mu$.
The coefficients in the change-of-basis from $\{T_{\sigma}\}_{\sigma \in S_n}$ to $\{\overline{T}_{\sigma}\}_{\sigma \in S_n}$ are given by the \emph{Kazhdan--Lusztig $R$-polynomials} defined by Kazhdan and Lusztig in \cite{MR560412}.
The sums will be given by the \emph{parabolic Kazhdan--Lusztig R-polynomials} defined by Deodhar in \cite{MR916182}.
The following exposition closely follows \cite{MR2133266}.

\begin{definition}[{\cite[Theorem 5.1.1]{MR2133266}}]
The Kazhdan--Lusztig $R$-polynomials are the unique family of polynomials $\left\{R_{u, v}(q)\right\}_{u, v \in S_n} \subseteq \mathbb Z[q]$ defined through the following recurrence relations: Let $R_{u, v}(q) = 0$ if $u \not \leq v$ and $R_{u, v}(q) = 1$ if $u = v$. Finally for $u < v$, and for any $s \in S$ such that $sv < v$,  we have 
\begin{align*}
    R_{u,v}(q)  = \begin{cases}
        R_{su, sv}(q) & \text{if $su < u$} \\
        qR_{su, sv}(q) + (q-1)R_{su, v}(q) & \text{if $su > u$}.
    \end{cases}
\end{align*}
\end{definition}

It is straightforward to check that the following identity holds:
\begin{equation}\label{eq:RIdentity}
(-q)^{\ell(v) - \ell(u)} R_{u, v}\left(\frac{1}{q}\right)=R_{u, v}(q).
\end{equation}
We can now express the elements $\overline{T}_\sigma = T_{\sigma^{-1}}^{-1}$ in terms of the $T_{\sigma}$ as follows:
\begin{proposition}[{\cite[Chapter 6.1]{MR2133266}}] \label{prop:basis} Recall the definition of $\widetilde{T}_{\sigma} = q^{\ell(\sigma)}T_{\sigma}$ from Remark \ref{rmk:tildeT}. Then we have 
\begin{equation*}
\widetilde{T}_{w^{-1}}^{-1}=q^{-\ell(w)} \sum_{y \leq w}(-1)^{\ell(y) - \ell( w)} R_{y, w}(q) \widetilde{T}_y.
\end{equation*}
Equivalently, using \eqref{eq:RIdentity}, we have  
\begin{align*}
{T}_{w^{-1}}^{-1}&=\sum_{y \leq w}(-1)^{\ell(y) - \ell( w)} R_{y, w}(q) q^{\ell(y)} {T}_y\\
&= \sum_{y \leq w}q^{\ell(w)}R_{y, w}(\tfrac1q) {T}_y.
\end{align*}
\end{proposition}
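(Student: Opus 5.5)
We need to prove Proposition~\ref{prop:basis}: the expansion of $\widetilde T_{w^{-1}}^{-1}$ in the $\widetilde T_y$ basis. The plan is induction on $\ell(w)$, using the defining recurrence of the $R$-polynomials. Since the first form follows from the standard references, we verify it directly in our conventions, where $\widetilde T_\sigma = q^{\ell(\sigma)}T_\sigma$ satisfies $(\widetilde T_s-q)(\widetilde T_s+1)=0$. This gives $\widetilde T_s^{-1}=q^{-1}\widetilde T_s+(q^{-1}-1)\widetilde T_e$.

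For $w=e$ both sides equal $\widetilde T_e$. For $\ell(w)\ge1$, pick $s\in S$ with $sw<w$, so $w=s\cdot(sw)$ with lengths adding. Then $w^{-1}=(sw)^{-1}s$, hence $\widetilde T_{w^{-1}}=\widetilde T_{(sw)^{-1}}\widetilde T_s$ and
\[
\widetilde T_{w^{-1}}^{-1}=\widetilde T_s^{-1}\widetilde T_{(sw)^{-1}}^{-1}=\bigl(q^{-1}\widetilde T_s+(q^{-1}-1)\bigr)\,q^{-\ell(w)+1}\sum_{y\le sw}(-1)^{\ell(y)-\ell(w)+1}R_{y,sw}(q)\widetilde T_y .
\]
We then expand $\widetilde T_s\widetilde T_y$ using the standard rule: it equals $\widetilde T_{sy}$ if $sy>y$, and $q\widetilde T_{sy}+(q-1)\widetilde T_y$ if $sy<y$. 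Next we collect the coefficient of $\widetilde T_u$ and compare it with $q^{-\ell(w)}(-1)^{\ell(u)-\ell(w)}R_{u,w}(q)$, splitting into two cases.

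If $su<u$, then $\widetilde T_u$ arises from $y=su$ (with $sy>y$) and from $y=u$ (with $sy<y$). The terms involving $R_{u,sw}$ cancel, since $q^{-1}(q-1)+(q^{-1}-1)=0$. The remaining term gives $R_{su,sw}$ with the correct sign, matching $R_{u,w}=R_{su,sw}$.

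If $su>u$, then $\widetilde T_u$ arises from $y=su$ with coefficient $q^{-1}\cdot q$, and from $y=u$ with coefficient $(q^{-1}-1)$. After tracking the signs from the parity change $\ell(su)=\ell(u)+1$, this should give $qR_{su,sw}+(q-1)R_{su,w}$. Here we use the identity $R_{u,sw}=R_{su,w}$ for $su>u$, $sw<w$, a standard consequence of the recurrence (equivalently, the left-right symmetry). This identity is the main point needing care.

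The support condition $y\le w$ follows from the lifting property of Bruhat order.

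Finally, the second displayed form follows by substituting $\widetilde T=q^{\ell}T$ and applying \eqref{eq:RIdentity}. The main obstacle is the bookkeeping of signs and powers of $q$ in the case $su>u$; I would double-check it on $w=s$ before writing it up in general.
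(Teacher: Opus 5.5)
Your proposal is correct. I checked the induction. In the case $su<u$, the $\widetilde T_u$-terms coming from $y=u$ cancel; this is just $\widetilde T_s^{-1}\widetilde T_y=\widetilde T_{sy}$ when $sy<y$. In the case $su>u$, the coefficient of $\widetilde T_u$ is $q^{-\ell(w)}(-1)^{\ell(u)-\ell(w)}\bigl(qR_{su,sw}(q)+(q-1)R_{u,sw}(q)\bigr)$, exactly as you anticipated.

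The paper does not prove the first identity. It cites Chapter 6.1 of Björner--Brenti and only performs the conversion to the $T_y$ basis: substituting $\widetilde T_\sigma=q^{\ell(\sigma)}T_\sigma$ gives the first line of the second display, and \eqref{eq:RIdentity} gives the last. You reproduce that step. Your induction on $\ell(w)$, via $\widetilde T_{w^{-1}}^{-1}=\widetilde T_s^{-1}\widetilde T_{(sw)^{-1}}^{-1}$, supplies the cited result directly in the paper's normalization. It is essentially the standard argument behind the reference rather than a genuinely different route. What it adds is a self-contained check that the paper's left-descent form of the recurrence, with $R_{su,v}$ in the second case, is the one matching this normalization of $\mathcal H(S_n)$.

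One simplification: the identity $R_{u,sw}=R_{su,w}$ (for $su>u$, $sw<w$), which you single out as the delicate point, needs no separate symmetry argument. It is just the first case of the paper's recurrence applied to the pair $(su,w)$, since $s(su)=u<su$ and $sw<w$.
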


We can use the Kazhdan--Lusztig $R$-polynomials to define permutation-indexed probability measures on $S_n$. These are exactly the ``lifted" versions of the $v_{\lambda}$ to $S_n$. 

\begin{proposition} \label{prop:probmeasure}
   For a fixed $q < 1, v \in S_n$, $\mathbb P_v(u):= q^{\ell(v)}R_{u, v}(\tfrac1q)$ defines a probability measure over permutations $u \in S_n$.
\end{proposition}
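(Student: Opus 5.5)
Two things must be shown: each $\mathbb P_v(u)=q^{\ell(v)}R_{u,v}(1/q)$ is nonnegative, and $\sum_u \mathbb P_v(u)=1$. Both should come out of Proposition~\ref{prop:basis} combined with Proposition~\ref{prop:YTbar}, by the same induction that gave the corollary after Proposition~\ref{prop:vdynamics}. By Proposition~\ref{prop:basis}, $\mathbb P_v(u)$ is exactly the coefficient of $T_u$ in $\overline{T}_{v^{-1}}=T_v^{-1}$ (up to relabeling by inverse). So it suffices to show that $\overline{T}_\sigma\in\mathcal H_{\text{prob}}(S_n)$ for every $\sigma\in S_n$.

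I would argue by induction on $\ell(\sigma)$. The base case is $\overline{T}_e=T_e$, which is a probability element. For the inductive step, suppose $\ell(\tau\sigma)=\ell(\sigma)+1$, i.e.\ $\sigma(k)<\sigma(k+1)$. Proposition~\ref{prop:YTbar} gives
\[
\overline{T}_{\tau\sigma}=\tfrac{1}{b_2}Y_\tau\overline{T}_\sigma-\tfrac{1-b_2}{b_2}\overline{T}_\sigma .
\]
Two properties follow from this.

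\emph{Total mass.} The linear functional $\epsilon:\sum\kappa_\sigma T_\sigma\mapsto\sum\kappa_\sigma$ satisfies $\epsilon(Y_\tau h)=\epsilon(h)$, by \eqref{eq:YT} and linearity. Hence $\epsilon(\overline{T}_{\tau\sigma})=\tfrac{1}{b_2}-\tfrac{1-b_2}{b_2}=1$.

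\emph{Nonnegativity.} This is the main obstacle, since the identity above expresses $\overline{T}_{\tau\sigma}$ as a difference. I would first try the direct route: the identity $T_\tau^{-1}=qT_\tau+(1-q)T_e$ shows that left multiplication by $T_\tau^{-1}$ is itself a Markov-type operator on coefficients. Using \eqref{eq:HeckeRel}, for each basis element we get
\[
T_\tau^{-1}T_\rho=\begin{cases}qT_{\tau\rho}+(1-q)T_\rho, & \ell(\tau\rho)>\ell(\rho),\\ T_{\tau\rho}, & \ell(\tau\rho)<\ell(\rho).\end{cases}
\]
In the second case the computation is $qT_\tau T_\rho+(1-q)T_\rho=q\bigl((1-q^{-1})T_\rho+q^{-1}T_{\tau\rho}\bigr)+(1-q)T_\rho=T_{\tau\rho}$.

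Both cases have nonnegative coefficients summing to $1$, so $h\mapsto T_\tau^{-1}h$ preserves $\mathcal H_{\text{prob}}(S_n)$. Since $\overline{T}_{\tau\sigma}=T_{\sigma^{-1}\tau}^{-1}=T_\tau^{-1}\overline{T}_\sigma$ when lengths add, the induction closes immediately and gives both nonnegativity and total mass $1$ at once.

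Finally, translate the statement back. The coefficient of $T_u$ in $T_v^{-1}$ is $q^{\ell(v)}R_{u,v}(1/q)$ by Proposition~\ref{prop:basis}, with $w=v$ and $y=u$. Since $T_v^{-1}=\overline{T}_{v^{-1}}\in\mathcal H_{\text{prob}}$, these coefficients are nonnegative and sum to $1$, which is the claim. The only care needed is bookkeeping: matching the convention $\tau\sigma=\sigma\circ\tau$ and the inverse in $\overline{T}_\sigma=T_{\sigma^{-1}}^{-1}$ against the indexing in Proposition~\ref{prop:basis}.
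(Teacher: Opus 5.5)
Your proof is correct, but it takes a different route from the paper's. The paper gets nonnegativity by quoting the known fact that $R_{u,v}(q)\geq 0$ for $q\geq 1$ and evaluating at $1/q>1$. It gets total mass $1$ by applying the algebra homomorphism $T_w\mapsto 1$, which respects $(T_\tau-1)(T_\tau+q^{-1})=0$, to the expansion in Proposition~\ref{prop:basis}.

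You instead show that left multiplication by $T_\tau^{-1}=qT_\tau+(1-q)T_e$ acts on the $T$-basis by transitions that are nonnegative and mass-preserving. You then induct along a reduced word using $\overline{T}_{\tau\sigma}=T_\tau^{-1}\overline{T}_\sigma$. This yields positivity and normalization together from the quadratic relation alone, so your argument is self-contained. It also reproves the positivity of $R_{u,v}$ on $(1,\infty)$ that the paper imports. There is a bonus in the spirit of the paper: $T_\tau^{-1}$ is exactly $Y_\tau$ with $(b_1,b_2)=(q,1)$. Hence $\mathbb P_v$ is the law of a degenerate Hecke random walk run along a reduced word of $v$.

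Two small remarks. First, your preliminary \emph{total mass} computation via Proposition~\ref{prop:YTbar} is redundant, since the direct route already handles the mass. Second, there is a bookkeeping slip at the end. By Proposition~\ref{prop:basis} with $w=v$ and $y=u$, the number $q^{\ell(v)}R_{u,v}(1/q)$ is the coefficient of $T_u$ in $T_{v^{-1}}^{-1}=\overline{T}_v$, not in $T_v^{-1}$. Since you proved $\overline{T}_\sigma\in\mathcal H_{\text{prob}}(S_n)$ for every $\sigma$, just take $\sigma=v$ and nothing else changes.
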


\begin{remark}
     This was also pointed out by Galashin in \cite{MR4317703} in a related context.
\end{remark}

\begin{proof}
First note that for $q \geq 1$, we have that $R_{u, v}(q) \geq 0$, see e.g. \cite[Proposition 5.3.1]{MR2133266}. This is only true for $q \geq 1$ and the $R$-polynomials can be negative for $q < 1$. 

Finally, Proposition \ref{prop:basis} tells us that
\begin{align*}
{T}_{w^{-1}}^{-1}&=\sum_{y \leq w}q^{\ell(w)}R_{y, w}(\tfrac1q) {T}_y.
\end{align*}
Consider the algebra homomorphism from the Hecke algebra $\mathcal H(S_n)$ to $\mathbb Z[q, q^{-1}]$ that maps $T_w \mapsto 1$ for all $w$. This is a homomorphism as it respects the quadratic relation  $(T_{\tau} - 1)(T_{\tau} + q^{-1}) = 0$. It follows that \begin{align*}
    \sum_{u} q^{\ell(v)}R_{u, v}(\tfrac 1q) = 1,
\end{align*} 
so that $\mathbb P_v(u)$ is a probability measure.
\end{proof}

We now define the parabolic Kazhdan--Lusztig $R$-polynomials.
\begin{definition} [\cite{MR916182}]
    Given $J \subseteq S$ and $u, v \in (S_n)^J$, we can define the parabolic Kazhdan--Lusztig $R$-polynomials with parameter $x \in \{-1, q\}$ as $$
R_{u, v}^{J, x}(q)=\sum_{w \in (S_n)_J} (-x)^{\ell(w)}R_{uw, v}(q).
$$
\end{definition}

We will consider the parabolic Kazhdan--Lusztig $R$-polynomials with $J = S \setminus \{\tau_i\}$ and with $x = -1$ so that we have
$$
R_{u, v}^{J, -1}(q)=\sum_{w \in (S_n)_J} R_{ uw, v}(q) =\sum_{z \in u \cdot (S_n)_J} R_{z, v}(q) .
$$
In other words, we are summing over all permutations in the coset $u \cdot (S_n)_J$, i.e., all permutations $z$ such that $\phi(z) = \phi(u).$ The following proposition is now immediate:
\begin{proposition} \label{prop:parabolicProjection}
We have 
$$v_{\phi(v)}(\phi(u)) = q^{\ell(v)} R^{J, -1}_{u, v}\left(\frac1q\right)  = q^{\ell(u)}(-1)^{\ell(v)- \ell(u)}R^{J, q}_{u, v}\left(q\right).$$
\end{proposition}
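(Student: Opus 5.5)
The first equality will follow by combining Proposition~\ref{prop:Tproj} with Proposition~\ref{prop:basis}. Applied to $w=v$, Proposition~\ref{prop:basis} gives
\[
\overline{T}_v=T_{v^{-1}}^{-1}=\sum_{y\leq v}q^{\ell(v)}R_{y,v}(\tfrac1q)\,T_y .
\]
We then apply the linear map $\phi$, which sends $T_y\mapsto\delta_{\phi(y)}$. Here $\phi$ is defined on $\mathcal H_{\text{prob}}$; we extend it linearly to all of $\mathcal H(S_n)$, or equivalently note that $\overline{T}_v$ lies in $\mathcal H_{\text{prob}}$ by Proposition~\ref{prop:probmeasure}. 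By Proposition~\ref{prop:Tproj} the left side becomes $v_{\phi(v)}$. Evaluating both sides at the partition $\phi(u)$ gives
\[
v_{\phi(v)}(\phi(u))=\sum_{z:\,\phi(z)=\phi(u)}q^{\ell(v)}R_{z,v}(\tfrac1q),
\]
where the terms with $z\not\leq v$ contribute zero because $R_{z,v}=0$ for them. The fibre $\{z:\phi(z)=\phi(u)\}$ is exactly the coset $u\cdot(S_n)_J$: the fibres of $\phi$ are precisely these cosets, since $\phi(z)$ only records which positions hold colours $\leq i$, and that set is invariant under right multiplication by $\operatorname{Stab}(\{1,\dots,i\})$. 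Therefore the sum equals $q^{\ell(v)}R^{J,-1}_{u,v}(\tfrac1q)$ by the displayed formula for $R^{J,-1}$.

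The one point to check carefully is the convention $\tau\sigma=\sigma\circ\tau$. Right multiplication $uw$ with $w\in(S_n)_J$ means $uw=w\circ u$, which permutes the colour values $\{1,\dots,i\}$ and $\{i+1,\dots,n\}$ among themselves. This leaves the particle configuration $(\mathbf 1_{\sigma(j)>i})_j$ unchanged, so the fibre and coset indeed coincide.

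For the second equality I would use the standard relation between the two parabolic families (Deodhar): for $u,v\in(S_n)^J$,
\[
R^{J,-1}_{u,v}(q)=(-q)^{\ell(v)-\ell(u)}\,R^{J,q}_{u,v}(\tfrac1q),
\]
the parabolic analogue of \eqref{eq:RIdentity}. To prove it directly, I would expand
\[
R^{J,q}_{u,v}(\tfrac1q)=\sum_{w\in(S_n)_J}(-q)^{\ell(w)}R_{uw,v}(q).
\]
Since $u$ is a minimal coset representative, $\ell(uw)=\ell(u)+\ell(w)$. Applying \eqref{eq:RIdentity} to each $R_{uw,v}(q)$ gives
\[
R_{uw,v}(q)=(-q)^{\ell(v)-\ell(u)-\ell(w)}R_{uw,v}(\tfrac1q).
\]
The factors $(-q)^{\ell(w)}$ cancel, leaving $(-q)^{\ell(v)-\ell(u)}R^{J,-1}_{u,v}(\tfrac1q)$. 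Replacing $q$ by $1/q$ and multiplying by $q^{\ell(v)}$ then yields
\[
q^{\ell(v)}R^{J,-1}_{u,v}(\tfrac1q)=q^{\ell(v)}(-q)^{-(\ell(v)-\ell(u))}R^{J,q}_{u,v}(q)=q^{\ell(u)}(-1)^{\ell(v)-\ell(u)}R^{J,q}_{u,v}(q).
\]

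The main obstacle is bookkeeping rather than substance. One must confirm that $u,v$ are taken in $(S_n)^J$, so that the length additivity $\ell(uw)=\ell(u)+\ell(w)$ holds, and keep the signs and powers of $q$ consistent with the paper's nonstandard normalization of $T_\sigma$.
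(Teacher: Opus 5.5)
Your treatment of the first equality is the paper's proof. You expand $\overline{T}_v$ by Proposition~\ref{prop:basis}, push forward under $\phi$ using Proposition~\ref{prop:Tproj}, and identify the fibre of $\phi$ over $\phi(u)$ with the coset $u\cdot(S_n)_J$. Your check against the convention $\tau\sigma=\sigma\circ\tau$ makes explicit what the paper leaves implicit.

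For the second equality the paper only cites \cite[Proposition 2.5]{MR1972246}, while you prove it directly. Length additivity $\ell(uw)=\ell(u)+\ell(w)$ for $u\in(S_n)^J$, $w\in(S_n)_J$ lets \eqref{eq:RIdentity}, applied term by term, absorb each weight $(-q)^{\ell(w)}$ into a common factor $(-q)^{\ell(v)-\ell(u)}$. This is correct and self-contained: it uses only \eqref{eq:RIdentity}, which the paper already states, plus a standard Coxeter-group fact. It also shows why the second identity needs $u$ to be the minimal coset representative. The citation is shorter but hides this.

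There is one slip to fix. The sum $\sum_{w\in(S_n)_J}(-q)^{\ell(w)}R_{uw,v}(q)$ is $R^{J,q}_{u,v}(q)$, not $R^{J,q}_{u,v}(\tfrac1q)$. Substituting $\tfrac1q$ also changes the parameter $x=q$, so $R^{J,q}_{u,v}(\tfrac1q)=\sum_w(-\tfrac1q)^{\ell(w)}R_{uw,v}(\tfrac1q)$. As literally written, your intermediate conclusion $R^{J,q}_{u,v}(\tfrac1q)=(-q)^{\ell(v)-\ell(u)}R^{J,-1}_{u,v}(\tfrac1q)$ is false. For example, with $n=3$, $i=1$, $u=123$, $v=231$, the left side is $1-\tfrac1q$ and the right side is $1-q$.

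With the label corrected, your computation proves $R^{J,q}_{u,v}(q)=(-q)^{\ell(v)-\ell(u)}R^{J,-1}_{u,v}(\tfrac1q)$. This is exactly the Deodhar-form identity you quoted, with $q$ replaced by $\tfrac1q$. Multiplying both sides by $q^{\ell(v)}(-q)^{\ell(u)-\ell(v)}$ then gives your final display directly.
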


\begin{proof}
    We have
    \begin{align*}
        v_{\phi(v)}(\phi(u))&= \sum_{z:\phi(z)=\phi(u)}\mathbb P_v(z)\\
        &= \sum_{z:\phi(z)=\phi(u)} 
q^{\ell(v)}R_{z, v}(\tfrac1q) \\
&= q^{\ell(v)} R^{J, -1}_{u, v}\left(\frac1q\right).
    \end{align*}
The identity
$$q^{\ell(v)} R^{J, -1}_{u, v}\left(\frac1q\right)  = q^{\ell(u)}(-1)^{\ell(v)- \ell(u)}R^{J, q}_{u, v}\left(q\right)$$ follows from \cite[Proposition 2.5]{MR1972246}.
\end{proof}

\subsection{Lack of monotonicity for the Kazhdan--Lusztig \texorpdfstring{$R$}{R}-polynomials} It is natural to ask whether the monotonicity properties we showed for the \emph{parabolic} Kazhdan--Lusztig $R$-polynomials in Proposition \ref{prop:monotonicity} are just projections of some more general monotonicity properties for the Kazhdan--Lusztig $R$-polynomials. We give a partial negative answer below. 

We show through a counterexample that the measures $\mathbb P_v(u)$ do not exhibit the desired stochastic monotonicity relative to the Bruhat ordering on $v$.

\begin{example}
  Consider the symmetric group $S_3$. We will show that even though $321 > 231$ in the Bruhat ordering, the measure $\mathbb P_{321}$ does not stochastically dominate $\mathbb P_{231}$. We first compute the polynomials $R_{\bullet, 231}$ and $R_{\bullet, 321}$ to obtain

\begin{align*}
    R_{123, 231}(q)&= (q-1)^2; && R_{132, 231}(q)= q-1;\\
    R_{213, 231}(q)&= q-1; &&  R_{231, 231}(q) = 1,
\end{align*}

and

\begin{align*}
    R_{123, 321}(q)&=(q-1)(q^2-q+1); &&R_{132, 321}(q)=(q-1)^2; \\
    R_{213, 321}(q)&=(q-1)^2; &&R_{231, 321}(q)=q-1; \\
    R_{312, 321}(q)&=q-1; &&R_{321, 321}(q)=1.
\end{align*}
It follows that for all $0 < q < 1$,
$$\mathbb P_{321}(123)=  q^3(q^{-1}-1)(q^{-2}-q^{-1}+1) > \mathbb P_{231}(123)= q^2(q^{-1}-1)^2.$$
This implies that $\mathbb P_{321}$ does not stochastically dominate $\mathbb P_{231}$.

\end{example}

\appendix
\section{Full censoring inequality for \texorpdfstring{$b_1+b_2\leq 1$}{b1+b2<=1}}\label{sec:ASEPregime}
In this appendix we will outline how one can prove the full censoring inequality in the case of $b_1+b_2\leq 1$. We first note that for parameters $b_1$ and $b_2$ satisfying $b_1+b_2>1$ the full analogue of Theorem~\ref{thm:PWcensoring} does not hold, since additional updates can move one further away from stationarity, as the following example shows.
\begin{example}\label{ex:censoring}
    Let $b_1+b_2>1$.
    Then, for the censoring schemes
    \[
    \mathcal C_1=\{(-1,0),(1,0),(0,1),(2,1)\},\quad \mathcal C_2=\mathcal C_1\cup\{(1,1)\}\,,
    \]
    the law of $\eta_2$ under the censoring of $\mathcal C_1$ is strictly dominated by the law of $\eta_2$ under the censoring of $\mathcal C_2$, even though $\mathcal C_1\subsetneq\mathcal C_2$.
    Indeed, the first one has probability $(1-b_1)^2+b_1b_2$ to be in minimal state $\bm{1}_{x>0}$, while the second has probability $1-b_1$ to be in this state.
    For $b_1+b_2>1$ it holds that $(1-b_1)< (1-b_1)^2+b_1b_2$.
\end{example}

The following theorem is the censoring inequality stated for the stochastic six-vertex model.
Its proof will closely follow \cite{MR3474475}, except for the proof of the monotonicity property Lemma~\ref{lem:monotonicity}, which proceeds via a coupling particular to the stochastic six-vertex model.



\begin{theorem}\label{thm:censoringASEP}
    Let $b_1\leq b_2$ satisfy $b_1+b_2\leq 1$ and let $(\eta_t)_{t\geq0}$ and $(\tilde\eta_t)_{t\geq0}$ be the $\mathcal C$-censored and the $\tilde{\mathcal C}$-censored stochastic six-vertex processes respectively, where $\mathcal C\subset\tilde{\mathcal C}$.
    Then
    \[
    \mathcal L(\tilde{\eta}_t)\preceq\mathcal L(\eta_t)\preceq \pi
    \]
    and
    \[
    \|\mathcal L(\tilde{\eta}_t)-\pi\|_\textrm{TV}\geq\|\mathcal L(\eta_t)-\pi\|_\textrm{TV}\,.
    \]
\end{theorem}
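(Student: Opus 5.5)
We follow the Peres--Winkler scheme as adapted in \cite{MR3474475}. By Proposition~\ref{prop:inter}, both laws are weak limits of finite products of the operators $P_k$ applied to $\delta_{\bm{1}_{x>0}}$. The censored law is obtained from the uncensored ordered product by replacing some factors $P_k$ with $\mathrm{Id}$. Hence it suffices to prove the following finite statement. Let $\nu=\prod_{i=1}^n P_{k_i}\delta_{\bm{1}_{x>0}}$, and let $\tilde\nu$ be obtained by deleting one factor. Then $\tilde\nu\preceq\nu\preceq\pi$, and $\|\tilde\nu-\pi\|_{\mathrm{TV}}\geq\|\nu-\pi\|_{\mathrm{TV}}$. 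Both stochastic domination and the TV inequality pass to weak limits, since decreasing events depend on finitely many coordinates near the step and $\pi$ is tight on $A$. Deleting one factor at a time then handles any $\mathcal C\subset\tilde{\mathcal C}$.

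The central notion is the class $\mathcal D$ of measures $\mu$ on $\mathcal P$ that are \emph{$\pi$-decreasing}, meaning that the density $f=\mu/\pi$ is a decreasing function for the containment order. By Proposition~\ref{prop:qExchange} this says $\mu(\lambda^k)\leq q\,\mu(\lambda)$ whenever $\lambda^k>\lambda$.

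The proof needs two lemmas.

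\textbf{Lemma~\ref{lem:decreasing}.} If $\mu\in\mathcal D$, then $P_k\mu\in\mathcal D$ and $P_k\mu\preceq\mu$. The second claim is checked directly from the formula for $P_k\nu(\eta)$ in Definition~\ref{def:Pk}. On a pair $\{\lambda,\lambda^k\}$ with $\lambda^k>\lambda$, the operator $P_k$ moves mass $b_2\mu(\lambda^k)-b_1\mu(\lambda)\leq 0$ downward, so $P_k\mu(E)\geq\mu(E)$ for decreasing $E$. Here we use $b_2q=b_1$ together with the decreasing density.

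The first claim needs $f$ to stay decreasing after the $P_k$-update. The update replaces the pair of values $(f(\lambda),f(\lambda^k))$ by
\[
\bigl((1-b_1)f(\lambda)+b_1f(\lambda^k),\; b_2 f(\lambda)+(1-b_2)f(\lambda^k)\bigr).
\]
The new value at $\lambda$ must dominate the new value at $\lambda^k$, which requires $1-b_1-b_2\geq0$. This is exactly where $b_1+b_2\leq1$ enters. We must also compare these values with neighbours along the other diagonals; for that we use the convex-combination form and the monotonicity of $f$ on the remaining pairs.

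\textbf{Lemma~\ref{lem:monotonicity}.} If $\mu\preceq\mu'$, then $P_k\mu\preceq P_k\mu'$, and likewise for any product of the $P_k$. For ASEP this comes from the graphical construction. Here we would build a monotone coupling of the single update $P_k$ on two configurations $\eta\geq\xi$ in height order. The only delicate case is when $\eta$ and $\xi$ differ at sites $k,k+1$ in opposite directions, e.g.\ $(0,1)$ versus $(1,0)$. In that case we use one uniform variable $U$: swap in $\eta$ if $U<b_1$, and swap in $\xi$ if $U>1-b_2$. This preserves order precisely when $b_1\leq 1-b_2$, that is, $b_1+b_2\leq1$. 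The remaining cases follow from a standard coupling.

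The theorem is then assembled as follows. Let $\tilde\nu=Q_2Q_1\delta$ and $\nu=Q_2P_kQ_1\delta$, where $Q_1,Q_2$ are products of $P_k$'s. We have $\delta_{\bm{1}_{x>0}}\in\mathcal D$, since the empty partition is the minimum. By Lemma~\ref{lem:decreasing}, $Q_1\delta\in\mathcal D$ and $P_kQ_1\delta\preceq Q_1\delta$ (in the reversed direction: $\tilde{\ }$ is smaller in height). Lemma~\ref{lem:monotonicity} applied with $Q_2$ then gives $\tilde\nu\preceq\nu$. For $\nu\preceq\pi$, apply $P_k$'s repeatedly: $\nu\preceq$ the result of further updates, and the uncensored dynamics converges to $\pi$, which is stationary. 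Alternatively, note that $\pi$-decreasing densities are dominated by $\pi$ by Harris/FKG for the distributive lattice of partitions.

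For the TV bound, both $\nu$ and $\tilde\nu$ lie in $\mathcal D$ by Lemma~\ref{lem:decreasing}, and $\nu\succeq\tilde\nu$. For $\mu\in\mathcal D$, $\|\mu-\pi\|_{\mathrm{TV}}=\sup_E(\mu(E)-\pi(E))$ over decreasing $E$, with the supremum attained at $E=\{f>1\}$, which is decreasing. Since $\tilde\nu(E)\geq\nu(E)$ for every decreasing $E$, the inequality follows.

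The main obstacle is the first claim of Lemma~\ref{lem:decreasing}: preserving decreasingness of $f$ across all neighbouring pairs, not only the updated one. We would verify it diagonal by diagonal using $(\lambda^j)^k$ commutation for $j\neq k$, and the single-pair inequality above for $j=k$.
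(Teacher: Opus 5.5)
Your route is the paper's. You use the same two lemmas: $P_k$ preserves a $\pi$-decreasing density with a one-step comparison, and a single $P_k$-update has a monotone coupling in which $b_1+b_2\le 1$ makes the two opposite swaps disjoint. Your total-variation argument through the decreasing set $\{f>1\}$ is also the paper's. Removing censored factors one at a time, instead of inducting on the length of the product, is an inessential variation.

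As written, however, your Lemma~\ref{lem:decreasing} points the wrong way, and the assembly does not follow from it. Your own flux computation says the net mass moved \emph{downward} on a pair $\lambda<\lambda^k$ is $b_2\mu(\lambda^k)-b_1\mu(\lambda)\le 0$, so mass moves up. For decreasing $E$, the only pairs that change the mass of $E$ have $\lambda\in E$ and $\lambda^k\notin E$. On each such pair, $P_k\mu(\lambda)-\mu(\lambda)=b_2\mu(\lambda^k)-b_1\mu(\lambda)\le 0$. Hence $P_k\mu(E)\le\mu(E)$, i.e.\ $\mu\preceq P_k\mu$, which is the paper's statement, not $P_k\mu\preceq\mu$. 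With your version, Lemma~\ref{lem:monotonicity} applied to $Q_2$ yields $\nu=Q_2P_kQ_1\delta\preceq Q_2Q_1\delta=\tilde\nu$, the opposite of the claim. The parenthetical ``in the reversed direction'' does not repair this. Once the lemma is corrected, $Q_1\delta\preceq P_kQ_1\delta$ and monotonicity under $Q_2$ give $\tilde\nu\preceq\nu$ directly. Your TV argument then goes through, since it correctly uses $\tilde\nu(E)\ge\nu(E)$ for decreasing $E$.

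Two smaller points.

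\emph{Domination by $\pi$.} Drop the justification of $\nu\preceq\pi$ via convergence of the uncensored dynamics to $\pi$, since no such ergodicity statement is available here. Your alternative is the right one. $\pi(\lambda)\propto q^{|\lambda|}$ satisfies the FKG lattice condition with equality on Young's lattice. So for decreasing $f=\mathrm d\nu/\mathrm d\pi$ and decreasing $E$, $\nu(E)=\pi(f\mathbf{1}_E)\ge\pi(f)\pi(E)=\pi(E)$. Alternatively, simply invoke Theorem~\ref{thm:product}.

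\emph{What you call the main obstacle.} You are right that decreasingness of $\mathrm dP_k\mu/\mathrm d\pi$ must be checked on every covering pair $\lambda<\lambda^j$; the paper's proof only treats $j=k$. But ``commutation'' only covers $|j-k|\ge 2$, because adjacent flips do not commute. The case $|j-k|=1$ needs a short case check on three sites, and it holds for all $b_1,b_2$. A cleaner route uses reversibility: since $b_1\pi(\lambda)=b_2\pi(\lambda^k)$, $\pi$ is reversible for $P_k$. Hence $\frac{\mathrm dP_k\mu}{\mathrm d\pi}(\lambda)=\mathbb E_\lambda[f(X_1)]$, with $X_1$ one $P_k$-step from $\lambda$. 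The monotone coupling of Lemma~\ref{lem:monotonicity} then makes this function decreasing on all pairs at once.
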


The proof of this will follow from the following two properties of the $P_k$.
\begin{lemma}\label{lem:decreasing}
    If $b_1+b_2\leq 1$ the following holds.
    Let $\nu$ be a measure on $A$ such that the Radon--Nikodym derivative $\frac{\mathrm{d}\nu}{\mathrm{d}\pi}$ of $\nu$ with respect to $\pi$ is a decreasing function on $A$. Then, for all $k\in\mathbb Z$ the Radon--Nikodym derivative $\frac{\mathrm{d}P_k\nu}{\mathrm{d}\pi}$ is also a decreasing function and $\nu\preceq P_k\nu$.
\end{lemma}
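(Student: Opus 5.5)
Write $f=\frac{\mathrm d\nu}{\mathrm d\pi}$, so $\nu(\eta)=f(\eta)\pi(\eta)$. The plan is to compute $g=\frac{\mathrm d P_k\nu}{\mathrm d\pi}$ explicitly, using $q$-exchangeability (Proposition~\ref{prop:qExchange}). Only the pair $\{\eta,\eta^k\}$ with $(\eta(k),\eta(k+1))=(0,1)$, i.e.\ $\lambda^k>\lambda$ in partition language, needs attention; write $\eta^k$ for the larger partition. Then $\pi(\eta^k)=q\pi(\eta)$. By Definition~\ref{def:Pk},
\[
P_k\nu(\eta)=b_2\nu(\eta^k)+(1-b_1)\nu(\eta),\qquad P_k\nu(\eta^k)=b_1\nu(\eta)+(1-b_2)\nu(\eta^k).
\]
Dividing by $\pi$ and using $b_2q=b_1$ gives
\[
g(\eta)=b_1 f(\eta^k)+(1-b_1)f(\eta),\qquad g(\eta^k)=b_2 f(\eta)+(1-b_2)f(\eta^k).
\]
Where $\eta(k)=\eta(k+1)$ we have $g=f$. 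So $g$ is obtained from $f$ by averaging each pair $(f(\eta),f(\eta^k))$, with different weights on the two sides of the pair.

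\emph{Monotonicity of $g$.} It suffices to check $g(\mu)\ge g(\mu')$ whenever $\mu'$ is obtained from $\mu$ by adding one box, since the containment order is generated by such covers. I will split into cases according to whether the added box lies on diagonal $k$ and whether $\mu,\mu'$ are flippable at $k$.

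\begin{itemize}
\item \textbf{Pair case.} If $\mu'=\mu^k$, then
\[
g(\mu)-g(\mu^k)=(1-b_1-b_2)\bigl(f(\mu)-f(\mu^k)\bigr)\ge0.
\]
This is exactly where $b_1+b_2\le1$ enters.
\item \textbf{Box off diagonal $k$.} Here the flip status at $k$ of $\mu$ and $\mu'$ may be the same or may change, for instance when the added box creates or destroys a corner on diagonal $k$.
\begin{itemize}
\item If both are flippable in the same direction, then $g(\mu),g(\mu')$ are the same convex combination of $f(\mu),f(\mu^k)$ and $f(\mu'),f(\mu'^k)$ respectively. Since $\mu\le\mu'$ and $\mu^k\le\mu'^k$ (adding boxes commutes with the flip), monotonicity of $f$ gives the claim.
\item If the status changes, I would argue that $g(\mu)$ and $g(\mu')$ are each convex combinations of $f$-values at configurations comparable in the right direction. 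For example, $\mu$ unflippable and $\mu'$ with a removable corner at $k$ gives $g(\mu')$ as a combination of $f(\mu')$ and $f(\mu'^k)$. Then $\mu'^k\ge\mu$ should hold, because the box removed from $\mu'$ cannot be a box of $\mu$ (otherwise $\mu$ would also have that corner).
\end{itemize}
\end{itemize}

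I expect this status-change subcase to be the main bookkeeping obstacle. I would organise it via the particle picture, $(\eta(k),\eta(k+1))$ and a left move of a particle at a site adjacent to $k$ or $k+1$, enumerating the finitely many local patterns.

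\emph{The domination $\nu\preceq P_k\nu$.} Fix a decreasing set $E$. As in the proof of Proposition~\ref{prop:monotonicity},
\[
P_k\nu(E)-\nu(E)=\sum_{\mu\in E,\ \mu^k\notin E,\ \mu^k>\mu}\bigl[b_2\nu(\mu^k)-b_1\nu(\mu)\bigr].
\]
Writing $\nu=f\pi$, each summand becomes
\[
b_2q\,\pi(\mu)f(\mu^k)-b_1\pi(\mu)f(\mu)=b_1\pi(\mu)\bigl(f(\mu^k)-f(\mu)\bigr)\le0,
\]
since $f$ is decreasing. Hence $P_k\nu(E)\le\nu(E)$ for every decreasing $E$. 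This is the stated domination $\nu\preceq P_k\nu$, with $P_k\nu$ the larger measure in the height order. Note that this part does not need $b_1+b_2\le1$, only the pair case of the monotonicity step does.
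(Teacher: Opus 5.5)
Your approach is the paper's, and the parts you wrote out are correct. Your pair identity $g(\mu)-g(\mu^k)=(1-b_1-b_2)\bigl(f(\mu)-f(\mu^k)\bigr)$ is exactly the paper's computation $P_k\nu(\lambda^k)-qP_k\nu(\lambda)=(1-b_1-b_2)\bigl(\nu(\lambda^k)-q\nu(\lambda)\bigr)$, divided by $\pi(\lambda^k)=q\pi(\lambda)$. Your domination step uses the same fact as the paper: within each $k$-pair, mass only moves upward, since $P_k\nu(\lambda^k)-\nu(\lambda^k)=b_2\bigl(q\nu(\lambda)-\nu(\lambda^k)\bigr)\ge0$. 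You phrase it through decreasing sets, as in the proof of Proposition~\ref{prop:monotonicity}, whereas the paper reads it off as a direct mass transfer.

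The paper's proof only writes out covers along diagonal $k$, so your attention to covers on other diagonals is more careful than the source. However, you left the status-change subcase as a plan, and the sample subcase you chose cannot occur.

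Adding a box $b$ on a diagonal $j\neq k$ never creates a removable corner on diagonal $k$. Any such corner $c\neq b$ of $\mu'$ would already be a removable corner of $\mu$. Nor can adding $b$ destroy an addable cell on diagonal $k$. In particle terms:
\begin{itemize}
\item for $j=k+1$, the pair $(\eta(k),\eta(k+1))$ goes $(0,0)\to(0,1)$ or $(1,0)\to(1,1)$;
\item for $j=k-1$, it goes $(1,1)\to(0,1)$ or $(1,0)\to(0,0)$;
\item for $|j-k|\ge2$, it is unchanged.
\end{itemize}
So the only status changes are \emph{removable $\to$ unflippable} and \emph{unflippable $\to$ addable}.

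Both close in one line from your explicit formulas for $g$. Since $f$ is decreasing, $g\ge f$ where $\mu^k<\mu$, $g=f$ where $\mu^k=\mu$, and $g\le f$ where $\mu^k>\mu$. Hence in both cases $g(\mu)\ge f(\mu)\ge f(\mu')\ge g(\mu')$. Together with the trivial both-unflippable case and your commuting-flips case, this completes the argument.
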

\begin{proof}
    Note that since $\pi$ is proportional to $q^{|\lambda|}$, $\frac{\mathrm{d}\nu}{\mathrm{d}\pi}$ being decreasing means that for any $\lambda$ such that $\lambda^k>\lambda$ we have
    \[
    \nu(\lambda^k)\leq q\nu(\lambda)\,.
    \]
    After applying $P_k$ we have    
    \begin{align*}
        P_k\nu(\lambda^k)-qP_k\nu(\lambda)&=b_1\nu(\lambda)+(1-b_2)\nu(\lambda^k)-q(b_2\nu(\lambda^k)+(1-b_1)\nu(\lambda))\\
        &=(1-b_1-b_2)(\nu(\lambda^k)-q\nu(\lambda))\leq 0\,.
    \end{align*}
    Notice that the same calculation shows that $P_k\nu(\lambda^k)\geq \nu(\lambda^k)$, since $P_k\nu(\lambda)+P_k\nu(\lambda^k)=\nu(\lambda)+\nu(\lambda^k)$.
    Note that $P_k\nu$ and $\nu$ only differ at partitions $\lambda$ such that $\lambda^k\neq \lambda$ and at these points $P_k\nu$ always assigns more mass to the bigger partition, so we also get $\nu\preceq P_k\nu$.
    
\end{proof}

\begin{lemma}\label{lem:monotonicity}
    If $b_1+b_2\leq 1$ and $\nu_1\preceq\nu_2$ then $P_k\nu_1\preceq P_k\nu_2$.
\end{lemma}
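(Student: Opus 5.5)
We prove Lemma~\ref{lem:monotonicity} with an explicit monotone coupling of a single application of $P_k$. Since $\nu_1\preceq\nu_2$, there is a coupling $(\lambda,\lambda')$ with $\lambda\sim\nu_1$, $\lambda'\sim\nu_2$ and $\lambda\leq\lambda'$ in the containment order. It therefore suffices to handle Dirac measures: for any fixed pair $\lambda\leq\lambda'$, we build a coupling of $P_k\delta_\lambda$ and $P_k\delta_{\lambda'}$ that is supported on ordered pairs. Mixing these couplings over the law of $(\lambda,\lambda')$ then couples $P_k\nu_1$ with $P_k\nu_2$ while preserving the order.

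Both chains are coupled with a single uniform variable $U\in[0,1]$. By \eqref{eq:PkonDirac}, $P_k$ only changes a partition on the $k$th diagonal. A partition with $\lambda^k>\lambda$ (configuration $(0,1)$ at $(k,k+1)$) moves up to $\lambda^k$ with probability $b_1$. A partition with $\lambda^k<\lambda$ moves down with probability $b_2$. Otherwise it stays put. The cell $(i,j)$ with $i-j=k$ involved in the flip of $\lambda$ may differ from the one for $\lambda'$. Let $c=\#\{\text{cells of }\lambda\text{ on diagonal }k\}$ and $c'$ the analogous count for $\lambda'$, so $c\le c'$.

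The only way order can fail is the following. If $c<c'$, then after one flip we still have $c\pm1\le c'\mp1$, except when $c'=c+1$ and either $\lambda$ goes up or $\lambda'$ goes down. One checks that adding or removing a single cell on one diagonal keeps containment with respect to all other diagonals, because $\lambda\le\lambda'$ forces the other diagonals to agree in the relevant local region. So the dangerous case is $c'=c+1$, where $\lambda$ is necessarily of up-type and $\lambda'$ of down-type. A violation occurs exactly when $\lambda$ moves up while $\lambda'$ fails to move down, or $\lambda'$ moves down while $\lambda$ fails to move up. Moving both simultaneously just swaps the two values, which is also bad unless the resulting partitions coincide. Here is the natural assignment: $\lambda$ moves up iff $U\le b_1$, and $\lambda'$ moves down iff $U>1-b_2$. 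Under the hypothesis $b_1+b_2\le1$ these events are disjoint. In the dangerous case, $\lambda^k=\lambda'$ and $(\lambda')^k=\lambda$, so each single move yields two equal partitions and the order is preserved. When $c=c'$, both partitions have the same type on the $k$th diagonal, and we couple them using the same threshold (both move iff $U\le b_1$, respectively iff $U\le b_2$).

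The main obstacle is the combinatorial check that, for $\lambda\le\lambda'$, a flip on diagonal $k$ that keeps $c\le c'$ really preserves containment globally. In particular, when $c=c'$ the two partitions must have the same up/down/flat type. If $\lambda$ is flat on diagonal $k$ with $c=c'$ while $\lambda'$ can move up, then moving $\lambda'$ alone is harmless. The case where $\lambda'$ can move down but $\lambda$ is flat or up-type with equal diagonal count needs care. I would handle it by translating to particle configurations via height functions: $c$ and $c'$ correspond to $h(k+1)$ (up to a shift), and $h_\lambda\ge h_{\lambda'}$ pointwise is unaffected off $k+1$. Thus only the single height value at $k+1$ needs checking, which reduces everything to the case analysis above. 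The condition $b_1+b_2\le1$ enters exactly once, in the disjointness of the two moving events.
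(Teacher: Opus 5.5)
Your coupling is the paper's. When $\lambda,\lambda'$ have the same local pair $(\eta(k),\eta(k+1))$ you couple them identically. When $\lambda$ is of type $(0,1)$ and $\lambda'$ of type $(1,0)$ you make ``$\lambda$ moves up'' and ``$\lambda'$ moves down'' disjoint, which is possible exactly because $b_1+b_2\le 1$. The paper does this uniformly in that case, without separating $c'=c+1$ from $c'\ge c+2$. So the construction is right, but your argument that it preserves the order contains false statements that must be replaced. Below, $\eta,\eta'$ and $h,h'$ are the configurations and height functions of $\lambda,\lambda'$, and $c_j(\cdot)$ is the number of cells on diagonal $j$.

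(i) Take the dangerous case: $c'=c+1$, $\lambda$ up-type, $\lambda'$ down-type. A single move is never a violation, because afterwards the diagonal-$k$ counts coincide. The simultaneous move always is a violation, since $c+1>c$. You first assert the opposite. Your justification for single moves, namely $\lambda^k=\lambda'$ and $(\lambda')^k=\lambda$, is false in general. For $k=0$, $\lambda=(2,1)$, $\lambda'=(3,2)$, one has $\lambda^0=(2,2)\neq\lambda'$ and $(\lambda')^0=(3,1)\neq\lambda$.

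(ii) When $c=c'$ the types need not agree. For $k=0$, $\lambda=(1)\subseteq\lambda'=(2,1)$ both have one cell on diagonal $0$, yet $\lambda$ is down-type and $\lambda'$ is up-type. What is true is that $h(k+1)=h'(k+1)$ and $h\le h'$ give $\eta(k)\ge\eta'(k)$ and $\eta(k+1)\le\eta'(k+1)$. Hence $\lambda$ up-type forces $\lambda'$ up-type, and $\lambda'$ down-type forces $\lambda$ down-type. The case you flag as needing care is therefore empty, and every mixed-type case is harmless.

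(iii) Other diagonals are unaffected, but not because the partitions ``agree locally'' there. The reason is that $\lambda\subseteq\lambda'$ is equivalent to $c_j(\lambda)\le c_j(\lambda')$ for all $j$, i.e.\ $h\le h'$ pointwise (not $h\ge h'$). Meanwhile $P_k$ changes only $c_k$, i.e.\ $h(k+1)$.

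With these corrections your argument coincides with the paper's.
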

\begin{proof}
    Let $\chi$ be a coupling of $\nu_1$ and $\nu_2$ such that for $\eta^1_0,\eta^2_0$ sampled according to $\chi$ we have that $\eta^1_0\leq\eta^2_0$.
    Let $\eta^1_1$ and $\eta^2_1$ be obtained from $\eta^1_0$ and $\eta^2_0$ respectively by applying the dynamics described in Definition~\ref{def:Pk}, i.e., the values $(\eta^1_0(k),\eta^1_0(k+1))$ are swapped with probability $b_1$ if they are $(0,1)$ and with probability $b_2$ if they are $(1,0)$.
    We will now construct a coupling of $\eta^1_1$ and $\eta^2_1$ such that $\eta^1_1\leq\eta^2_1$ holds, which shows that $P_k\nu_1\preceq P_k\nu_2$.
    Firstly, if $(\eta^1_0(k),\eta^1_0(k+1))=(\eta^2_0(k),\eta^2_0(k+1))$, sample $\eta^1_1$ and $\eta^2_1$ such that $(\eta^1_1(k),\eta^1_1(k+1))=(\eta^2_1(k),\eta^2_1(k+1))$ as well.
    Further if $(\eta^1_0(k),\eta^1_0(k+1),\eta^2_0(k),\eta^2_0(k+1))=(0,1,1,0)$ sample $(\eta^1_1(k),\eta^1_1(k+1),\eta^2_1(k),\eta^2_1(k+1))$ according to
    \[
    \mathbb P\big((\eta^1_1(k),\eta^1_1(k+1),\eta^2_1(k),\eta^2_1(k+1))=x\big)=\begin{cases}
        b_1\text{, if }x=(1,0,1,0)\\
        b_2\text{, if }x=(0,1,0,1)\\
        1-b_1-b_2\text{, if }x=(0,1,1,0)\\
        0\text{, if }x=(1,0,0,1)\,.
    \end{cases}
    \]
    One can easily check that this has the correct marginals.
    Importantly the event that a particle moves to the left in $\eta^1$, while a particle moves to the right in $\eta^2$ at the same time is $0$, which would be the only case in which the ordering between $\eta^1$ and $\eta^2$ could be broken.
\end{proof}

Given these two lemmas Theorem~\ref{thm:censoringASEP} now follows as in \cite[Section A.2]{MR3474475}.
We repeat the argument here for completeness.

\begin{proof}[Proof of Theorem~\ref{thm:censoringASEP}]
    By Proposition~\ref{prop:inter}, what we need to show is that for all $N$
    \begin{equation}\label{eq:productinequality}
        \prod_{n=1}^N \hat{P}_{k_n}\delta_{\bm{1}_{x>0}}\preceq\prod_{n=1}^N P_{k_n}\delta_{\bm{1}_{x>0}}\,,
    \end{equation}
    where $$\hat{P}_{k_n}=\begin{cases}\mathrm{Id}&\text{for $k_n$ corresponding to vertices censored in $\tilde{\mathcal C}$ but not in $\mathcal C$.}\\
    P_{k_n} &\text{else}.\end{cases}$$

    By Lemma~\ref{lem:decreasing} all partial products are measures with decreasing Radon--Nikodym derivatives with respect to $\pi$, since $\delta_{\bm{1}_{x>0}}$ is such a measure.
    In particular, this shows they are all dominated by $\pi$.
    
    We will now prove \eqref{eq:productinequality} by induction on $N$.
    Assume it has been proven for some $N$, and denote the product until $N$ on the left and right hand side by $\tilde\nu_N$ and $\nu_N$ respectively.
    If $\hat P_{k_{N+1}}$ is the identity we have
    \[
    \tilde\nu_{N+1}=\tilde\nu_N\preceq\nu_N\preceq P_{k_{N+1}}\nu_N=\nu_{N+1}\,,
    \]
    where we used the induction hypothesis and Lemma~\ref{lem:decreasing}.
    If on the other hand $\hat P_{k_{N+1}}=P_{k_{N+1}}$ then
    $\tilde\nu_{N+1}\preceq\nu_{N+1}$ follows from $\tilde\nu_N\preceq\nu_N$ and Lemma~\ref{lem:monotonicity}.

    To obtain the statement on the total variation distance note that for decreasing measures $\nu_1$ and $\nu_2$, the inequality $\nu_1\preceq\nu_2$ implies that $\|\nu_1-\pi\|_\textrm{TV}\geq\|\nu_2-\pi\|_\textrm{TV}$, see \cite[Lemma A.4]{MR3474475}.
\end{proof}

To compare with the proof for general $b_1<b_2$, note particularly that the parabolic Kazhdan--Lusztig R-measures $v_\lambda$ do not generally have decreasing Radon--Nikodym derivative with respect to $\pi$. 

\begin{remark}
    With the appropriate (very minor) modifications, the same proof works for the permutation-valued stochastic six-vertex model on a strip.
\end{remark}

\bibliographystyle{alpha}
\bibliography{refs.bib}

\end{document}